\definecolor{lsupurple}{RGB}{70,29,124}
\definecolor{lsugold}{RGB}{253,208, 35}
\newtheorem{theorem}{Theorem}[section]
\newtheorem{lemma}[theorem]{Lemma}
\newtheorem*{corollaryconj*}{Corollary of Conjecture \ref{conjecture:Signature}}
\theoremstyle{definition}
\newtheorem{definition}[theorem]{Definition}
\newtheorem{example}[theorem]{Example}
\theoremstyle{remark}
\numberwithin{equation}{section}
\def\pos{\operatorname{pos}}
\def\alt{\operatorname{alt}}
\def\adj{\operatorname{adj}}
\def\Kh{\underline{Kh}}
\def\ukh{\underline{Kh}}
\def\girth{\operatorname{girth}}
\definecolor{lsupurple}{RGB}{70,29,124}
\definecolor{lsugold}{RGB}{253,208, 35}
\begin{document}

\title{\small{Near extremal Khovanov homology of Turaev genus one links}}

\author[\tiny{T. Beldon}]{Theo Beldon}
\address{\tiny{Department of Mathematics and Statistics\\
Vassar College\\
Poughkeepsie, NY}} 
\email{cbeldon@vassar.edu}

\author[\tiny{M. Destefano}]{Mia Destefano}
\address{\tiny{Department of Mathematics and Statistics\\
Vassar College\\
Poughkeepsie, NY}}
\email{mia.destefano@gmail.com}

\author[A. M. Lowrance]{Adam M. Lowrance}
\address{\tiny{Department of Mathematics and Statistics\\
Vassar College\\
Poughkeepsie, NY}} 
\email{adlowrance@vassar.edu}

\author[W. Milgrim]{Wyatt Milgrim}
\address{\tiny{Department of Mathematics and Statistics\\
Vassar College\\
Poughkeepsie, NY}} 
\email{wmilgrim@vassar.edu}

\author[C. Villase\~nor]{Cecilia Villase\~nor}
\address{\tiny{Department of Mathematics and Statistics\\
Vassar College\\
Poughkeepsie, NY}} 
\email{ceciqvilla@gmail.com}

\thanks{This paper is the result of a summer research project in Vassar Colleges Undergraduate Research Science Institute. The third author is supported by NSF grant DMS-1811344.}

\begin{abstract}
The Turaev surface of a link diagram $D$ is a closed, oriented surface constructed from a cobordism between the all-$A$ and all-$B$ Kauffman states of $D$. The Turaev genus of a link $L$ is the minimum genus of the Turaev surface of any diagram $D$ of $L$. A link is alternating if and only if its Turaev genus is zero, and so one can view Turaev genus one links as being close to alternating links. In this paper, we study the Khovanov homology of a Turaev genus one link in the first and last two polynomial gradings where the homology is nontrivial.  We show that a particular summand in the Khovanov homology of a Turaev genus one link is trivial. This trivial summand leads to a computation of the Rasmussen $s$ invariant and to bounds on the smooth four genus for certain Turaev genus one knots. 
\end{abstract}

\maketitle

\section{Introduction}

The Turaev surface of a link diagram $D$ is a closed, oriented surface constructed from a cobordism between the all-$A$ and all-$B$ Kauffman states of $D$ whose saddle points correspond to the crossings of $D$. Turaev \cite{Turaev} originally constructed this surface to give a topological proof that the span of the Jones polynomial gives a lower bound on the crossing number of a link (originally proven by Kauffman \cite{Kauffman:StateModels}, Murasugi \cite{Murasugi:Jones}, and Thistlethwaite \cite{Thistlethwaite:Spanning}). The minimum genus of the Turaev surface of any diagram of a link $L$ is the \textit{Turaev genus} $g_T(L)$ of $L$.

Because the Turaev genus of a link is zero if and only if the link is alternating, one can view the Turaev genus of a link as a measure of the link's distance from being alternating (see \cite{Low:AltDist}).  Kim \cite{Kim:Turaev} and independently Armond and Lowrance \cite{ArmLow} showed that every nonsplit Turaev genus one link has a diagram as in Figure \ref{figure:tg1}. Dasbach and Lowrance used this characterization of Turaev genus one links to prove that at least one of the leading or trailing coefficients of the Jones polynomial of a Turaev genus one link has absolute value one \cite{DasLow:Invariants} and to prove that the Khovanov homology of a Turaev genus one link in at least one of its extremal polynomial gradings is isomorphic to $\mathbb{Z}$ \cite{DasLow:Extremal}. Similarly, Lowrance and Spyropoulos \cite{LowSp:Jones} used the characterization to give a formulas for the second and penultimate coefficients of the Jones polynomial of a Turaev genus one link and to show that no Turaev genus one link has trivial Jones polynomial. In the current article, we study the second and penultimate polynomial gradings of the Khovanov homology of a Turaev genus one link. In order to precisely state our results, we recall the definitions of and some facts about almost alternating links and $A$- and $B$-adequate links.

\begin{figure}
\[\begin{tikzpicture}[thick, scale = .8]
\draw [bend left] (0,.5) edge (3,.5);
\draw [bend right] (0,-.5) edge (3, -.5);
\draw [bend left] (3,.5) edge (6,.5);
\draw [bend right] (3,-.5) edge (6, -.5);
\draw [bend left] (6,.5) edge (9,.5);
\draw [bend right] (6,-.5) edge (9, -.5);
\draw [bend left] (7.5,.5) edge (10.5,.5);
\draw [bend right] (7.5,-.5) edge (10.5,-.5);

\fill[white] (7.5,1) rectangle (9.1,-1);
\draw (8.25,0) node{\Large{$\dots$}};

\draw (-.8,.6) arc (270:90:.8cm);
\draw (-.8,-.6) arc (90:270:.8cm);
\draw (11.3,.6) arc (-90:90:.8cm);
\draw (11.3,-.6) arc (90:-90:.8cm);
\draw (-.8,2.2) -- (11.3,2.2);
\draw (-.8,-2.2) -- (11.3,-2.2);

\fill[white] (0,0) circle (1cm);
\draw (0,0) node {$R_1$};
\draw (0,0) circle (1cm);
\fill[white] (3,0) circle (1cm);
\draw (3,0) node {$R_2$};
\draw (3,0) circle (1cm);
\fill[white] (6,0) circle (1cm);
\draw (6,0) node {$R_3$};
\draw (6,0) circle (1cm);
\fill[white] (10.5,0) circle (1cm);
\draw (10.5,0) node {$R_{2k}$};
\draw (10.5,0) circle (1cm);

\draw (-.5,.6) node{$-$};
\draw (-.5,-.6) node{$+$};
\draw (.4,.7) node{$+$};
\draw (.4,-.7) node{$-$};

\draw (2.6,.7) node{$+$};
\draw (2.6,-.7) node{$-$};
\draw (3.4, .7) node{$-$};
\draw (3.4,-.7) node{$+$};

\draw (5.6, .7) node{$-$};
\draw (5.6,-.7) node{$+$};
\draw (6.4,.7) node{$+$};
\draw (6.4,-.7) node{$-$};

\draw (10.1, .7) node{$+$};
\draw (10.1,-.7) node{$-$};
\draw (11,.6) node{$-$};
\draw (11,-.6) node{$+$};

\end{tikzpicture}\]
\caption{If a non-split link $L$ has Turaev genus one, then $L$ has a diagram as above, where each $R_i$ is an alternating tangle whose closures are connected link diagrams. The label ``$+$'' on an incoming strand indicates that the strand passes over another strand in the first crossing it encounters. Similarly, the label ``$-$'' on an incoming strand indicates that the strand passes under another strand in the first crossing it encounters. We use this convention throughout the paper.}
\label{figure:tg1}
\end{figure}
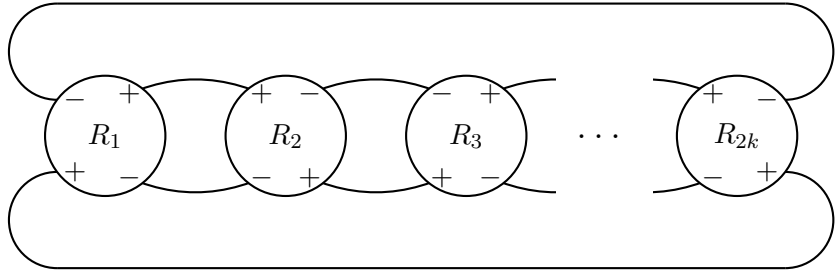

A link is \textit{almost alternating} if it is nonalternating and has a diagram that can be transformed into an alternating diagram via one crossing change. Such a diagram is called an \textit{almost alternating diagram}, and the crossing that is changed to obtain an alternating diagram is known as the \textit{dealternator}. Adams et al. \cite{Adams:Almost} first defined almost alternating links, proved that an almost alternating knot is either a torus knot or a hyperbolic knot, and proved some properties of the span of the Jones polynomial of an almost alternating link. Define $u_1$ and $u_2$ to be the two complementary regions of an almost alternating diagram incident to the dealternator  $\tikz[baseline=.6ex, scale = .4]{
\draw (0,0) -- (.3,.3);
\draw (.7,.7) -- (1,1);
\draw (0,1) -- (1,0);
}
~$ that are joined by an $A$-resolution$~ \tikz[baseline=.6ex, scale = .4]{
\draw[rounded corners = 1mm] (0,0) -- (.5,.45) -- (1,0);
\draw[rounded corners = 1mm] (0,1) -- (.5,.55) -- (1,1);
}~$.
Define $v_1$ and $v_2$ to be the two complementary regions incident to the dealternator that are joined by a $B$-resolution $~ \tikz[baseline=.6ex, scale = .4]{
\draw[rounded corners = 1mm] (0,0) -- (.45,.5) -- (0,1);
\draw[rounded corners = 1mm] (1,0) -- (.55,.5) -- (1,1);
}~$. Color the regions of the link diagram in a checkerboard fashion such that $v_1$ and $v_2$ are black and $u_1$ and $u_2$ are white. Let $R$ be the alternating tangle containing all of the crossings of $D$ other than the dealternator (see Figure \ref{figure:aadiagram}).

\begin{definition}
\label{definition:ABalmostalternating}
An \textit{$A$-almost alternating diagram} is an  almost alternating diagram satisfying the following conditions.
\begin{itemize}
\item [(1)] The regions $u_1$ and $u_2$ are distinct, and the regions $v_1$ and $v_2$ are distinct.
\item [(2)] There is no crossing other than the dealternator that is in the boundary of $u_1$ and $u_2$, and there is no crossing other than the dealternator that is in the boundary of $v_1$ and $v_2$.
\item [(3A)] There is no white region in $R$ that shares a crossing with each of $u_1$ and $u_2$.
\end{itemize}
An \textit{$A$-almost alternating link} is a nonalternating link with an $A$-almost alternating diagram. An almost alternating diagram is \textit{$B$-almost alternating} if it satisfies conditions (1) and (2) above as well as condition (3B).
\begin{itemize}
\item [(3B)] There is no black region in $R$ that shares a crossing with each of $v_1$ and $v_2$.
\end{itemize}
A \textit{$B$-almost alternating} link is a nonalternating link with a $B$-almost alternating diagram.
\end{definition}

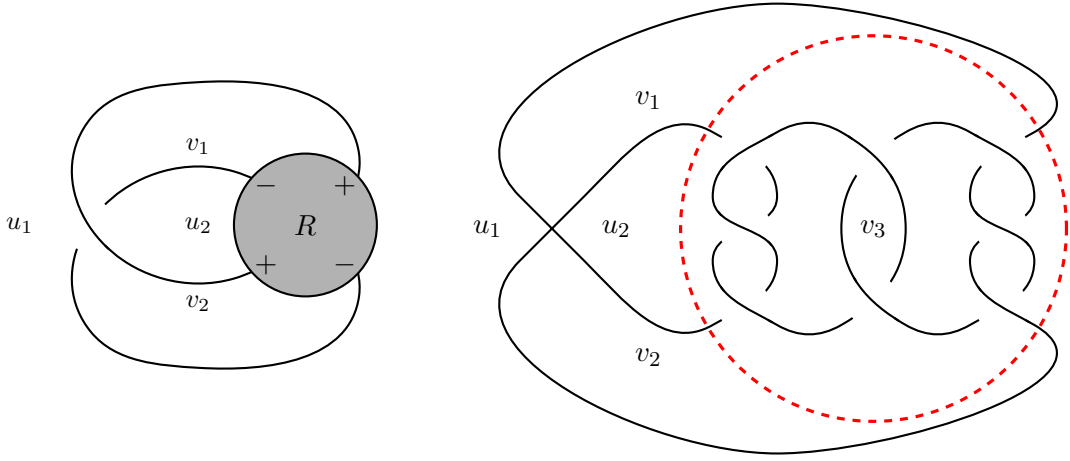
\begin{figure}
\[\begin{tikzpicture}[thick, scale = .95]

\begin{knot}[
	consider self intersections,
 	clip width = 5,
 	ignore endpoint intersections = true,
	end tolerance = 1pt
 ]
 \strand
 (4,0) to [curve through = {(4.75,1) .. (2.5,2) .. (1,1.5)  .. (2,-.75) .. (3,-.75)}] (4,0);
 \strand 
 (4,0) to [curve through = {(4.75,-1) .. (2.5, -2) .. (1,-1.5) .. (2,.75) .. (3,.75)}] (4,0);

\end{knot}

\fill[white!70!black] (4,0) circle (1cm);
\draw[thick] (4,0) circle (1cm);
\draw (4,0) node{$R$};
\draw (3.45,.55) node{$-$};
\draw (4.55,.55) node{$+$};
\draw (4.55,-.55) node{$-$};
\draw (3.45,-.55)node{$+$};

\draw (2.5,1.1) node{\small{$v_1$}};
\draw (2.5,-1.1) node{\small{$v_2$}};
\draw (2.5,0) node{\small{$u_2$}};
\draw (0,0) node{\small{$u_1$}};

\begin{scope}[xshift =  7 cm, yshift = -.5cm, scale = .9]

\begin{knot}[
    consider self intersections,
    clip width = 5,
    ignore endpoint intersections = true,
    end tolerance = 2pt
    ]
    \flipcrossings{1, 2, 4, 6, 8}
    \strand (0,0) to [out = 45, in = 225]
    (1,1) to [out=45, in =150]
    (3,2) to [out = -30, in =90]
    (4,1) to [out =270, in =90]
    (3,0) to [out = 270, in = 150]
    (4,-1) to [out =-30, in = 210]
    (5,-1) to [out = 30, in = 270]
    (6,.5) to [out = 90, in =-30]
    (5,2) to [out = 150, in = 30]
    (4,2) to [out = 210, in = 90]
    (3,1) to [out=270, in = 90]
    (4,0) to [out = 270, in = 30]
    (3,-1) to [out = 210, in = -45]
    (1,0) to [out = 135, in = -45]
    (0,1) to [out = 135, in = 180]
    (4,4) to [out = 0, in = 30]
    (8,2) to [out = 210, in = 90]
    (7,1) to [out = 270, in = 90]
    (8,0) to [out = 270, in = 30]
    (7,-1) to [out = 210, in = -30]
    (6,-1) to [out = 150, in = 270]
    (5,.5) to [out = 90, in = 210]
    (6,2) to [out = 30, in = 150]
    (7,2) to [out = -30, in = 90]
    (8,1) to [out = 270, in = 90]
    (7,0) to [out = 270, in = 150]
    (8,-1) to  [out = -30, in = 0]
    (4,-3) to [out = 180, in = 225]
    (0,0);
\end{knot}

\draw[very thick, dashed, red] (5.5,.5) circle (3cm);

\draw (-.5,.5) node{$u_1$};
\draw (1.5,.5) node{$u_2$};
\draw (2,2.5) node{$v_1$};
\draw (2,-1.5) node{$v_2$};
\draw (5.5,.5) node{$v_3$};
\end{scope}

\end{tikzpicture}\]
\caption{An arbitrary almost alternating diagram is on the left. The tangle $R$ is alternating, and the ``$+$'' and ``$-$'' labels have the same meaning as in Figure \ref{figure:tg1}. An $A$-almost alternating diagram is on the right where the dashed red circle encloses the alternating tangle $R$. Since the region $v_3$ shares crossings with both $v_1$ and $v_2$, the diagram is not $B$-almost alternating.}
\label{figure:aadiagram}
\end{figure}

Dasbach and Lowrance \cite{DasLow:Extremal} proved that every almost alternating link is $A$-almost alternating or $B$-almost alternating (or both).

A \textit{Kauffman state} of $D$ is the collection of simple closed curves obtained by resolving each crossing $\tikz[baseline=.6ex, scale = .4]{
\draw (0,0) -- (1,1);
\draw (1,0) -- (.7,.3);
\draw (.3,.7) -- (0,1);
}
~$ with an $A$-resolution $~ \tikz[baseline=.6ex, scale = .4]{
\draw[rounded corners = 1mm] (0,0) -- (.45,.5) -- (0,1);
\draw[rounded corners = 1mm] (1,0) -- (.55,.5) -- (1,1);
}~$or with a $B$-resolution $~ \tikz[baseline=.6ex, scale = .4]{
\draw[rounded corners = 1mm] (0,0) -- (.5,.45) -- (1,0);
\draw[rounded corners = 1mm] (0,1) -- (.5,.55) -- (1,1);
}~$. The \textit{all-$A$ state} of $D$ is the collection of curves obtained by choosing an $A$-resolution for every crossing of $D$, and similarly the \textit{all-$B$ state} of $D$ is the collection of curves obtained by choosing a $B$-resolution for every crossing of $D$. If no two arcs in the $A$-resolution (respectively $B$-resolution) of any crossing of the link diagram $D$ are contained in the same component of the all-$A$ (all-$B$) state of $D$, then $D$ is called \textit{$A$-adequate} (\textit{$B$-adequate}). A link is \textit{$A$-adequate} (respectively \textit{$B$-adequate}) if it has an $A$-adequate ($B$-adequate) diagram. A link diagram that is both $A$-adequate and $B$-adequate is called \textit{adequate}, and any link having such a diagram is also called \textit{adequate}. Kim \cite{Kim:Turaev} proved that every Turaev genus one link has a diagram as in Figure \ref{figure:tg1} that is either $A$-adequate, $B$-adequate, or almost alternating. 

\begin{definition}
A link is \textit{$A$-Turaev genus one} if it is nonalternating and has an $A$-almost alternating diagram or an $A$-adequate diagram whose Turaev surface is genus one. Similarly, a link is \textit{$B$-Turaev genus one} if it is nonalternating and has a $B$-almost alternating diagram or a $B$-adequate diagram whose Turaev surface is genus one. Every link that has Turaev genus is one is $A$-Turaev genus one, $B$-Turaev genus one, or both.
\end{definition}

The Khovanov homology of a link $L$ is a categorification of the Jones polynomial of $L$ \cite{Khovanov:Categorification}. If $R$ is a commutative ring with identity, then the Khovanov homology of $L$ with coefficients in $R$ is denoted $Kh(L;R)$. In this paper, the ring $R$ will most commonly be the integers $\mathbb{Z}$, in which case we denote the Khovanov homology by $Kh(L)$; occasionally, we consider when the ring $R$ is the rationals $\mathbb{Q}$.
There is a direct sum decomposition $Kh(L;R)\cong \bigoplus_{i,j\in\mathbb{Z}}Kh^{i,j}(L;R)$ where $Kh^{i,j}(L;R)$ is the summand in homological grading $i$ and polynomial grading $j$. Define 
\begin{alignat*}{4}
    j_{\min}(L) = &\;  \min\{j~|~Kh^{i,j}(L)\neq 0\},& 
    j_{\max}(L) = &\; \max\{j~|~Kh^{i,j}(L)\neq 0\},\\
    i_{\min}(L) = &\; \min\{i~|~Kh^{i,j}(L)\neq 0\},&
    i_{\max}(L)= & \; \max\{i~|~Kh^{i,j}(L)\neq 0\},\\
    \delta_{\min}(L) = & \; \min\{2i-j~|~Kh^{i,j}(L)\neq 0\},&~\text{and}~
    \delta_{\max}(L) = & \; \max \{2i-j~|~Kh^{i,j}(L)\neq 0\}.
\end{alignat*}
If $j_0$ is constant, then define $Kh^{*,j_0}(L) = \bigoplus_{i\in\mathbb{Z}}Kh^{i,j_0}(L)$. The statement $Kh^{*,j_0}(L)=Kh^{i_0,j_0}(L)$ means that the Khovanov homology of $L$ in polynomial grading $j_0$ is entirely supported in homological grading $i_0$. Our main theorem examines the Khovanov homology of a Turaev genus one link in its first or last two polynomial gradings.
\begin{theorem}
\label{theorem:main}
Let $L$ be a nonsplit link whose Turaev genus is one. If $L$ is $A$-Turaev genus one, then
\begin{enumerate}
    \item $Kh^{*,j_{\min}(L)}(L)\cong Kh^{i_{\min}(L),j_{\min}(L)}(L)\cong \mathbb{Z}$,
    \item $2i_{\min}(L)-j_{\min}(L)=\delta_{\min}(L)+2$, and
    \item $Kh^{i_{\min}(L)+2,j_{\min}(L)+2}(L)$ is trivial.
\end{enumerate}
If $L$ is $B$-Turaev genus one, then
\begin{enumerate}
    \item $Kh^{*,j_{\max}(L)}(L)\cong Kh^{i_{\max}(L),j_{\max}(L)}(L)\cong\mathbb{Z}$,
    \item $2i_{\max}(L) - j_{\max}(L) = \delta_{\max}(L)-2$, and
    \item $Kh^{i_{\max}(L)-2,j_{\max}(L)-2}(L)$ is trivial.
\end{enumerate}
\end{theorem}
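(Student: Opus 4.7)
The plan is to prove the $A$-Turaev genus one statements directly and then obtain the $B$-Turaev genus one statements by applying them to the mirror $\bar L$ and invoking the symmetry $Kh^{i,j}(L) \leftrightarrow Kh^{-i,-j}(\bar L)$, with a brief torsion check in the near-extremal range. An $A$-Turaev genus one link admits either an $A$-almost alternating diagram or an $A$-adequate diagram whose Turaev surface has genus one, so I would split the argument into these two subcases. Claim (1) in either subcase is essentially already contained in the main theorem of \cite{DasLow:Extremal}, so the substance of the proof lies in claims (2) and (3).

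In the $A$-adequate subcase I would work directly with the Khovanov cube of resolutions of a diagram $D$ of the form in Figure \ref{figure:tg1} whose Turaev surface has genus one. The all-$B$ state produces the unique generator of bidegree $(i_{\min}, j_{\min})$. The chain groups in bidegrees $(i_{\min}+k, j_{\min}+2k)$ for $k=1,2$ come from states that differ from the all-$B$ state in exactly $k$ crossings, so I would identify these chains and compute the induced differentials directly. The genus one hypothesis severely rigidifies the combinatorics of the all-$A$ and all-$B$ state graphs of $D$, and I expect the resulting near-extremal sub-complex to reduce to an explicit small piece whose homology vanishes at $(i_{\min}+2, j_{\min}+2)$ while supporting a surviving class that realizes $\delta_{\min}$, giving (2) and (3) simultaneously.

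In the $A$-almost alternating subcase I would apply the Khovanov skein long exact sequence at the dealternator $c$. Both resolutions of $c$ are closures of the alternating tangle $R$ from Figure \ref{figure:aadiagram}, so each yields an alternating diagram (perhaps after collapsing nugatory crossings). By Lee's theorem each resolution has $\delta$-thin, torsion-free Khovanov homology supported on two diagonals with summand structure dictated by the signature. I would insert these into the long exact sequence, tracking the grading shifts that depend on the sign of $c$, and extract an explicit description of $Kh(L)$ in a window of bidegrees around $(i_{\min}, j_{\min})$.

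The main obstacle is the almost-alternating half of (3). Ruling out a surviving generator at $(i_{\min}+2, j_{\min}+2)$ requires the connecting homomorphism of the skein sequence to be surjective in a specific near-extremal bidegree, and this is exactly where conditions (1), (2), and (3A) of Definition \ref{definition:ABalmostalternating} must be brought to bear. The crux is to translate the geometric hypothesis that no white region in $R$ meets both $u_1$ and $u_2$ into an algebraic statement about the connecting map, likely by tracing its effect on Lee-type canonical generators of the two alternating resolutions. Once that identification is made, surjectivity should follow from the disjointness of the regions involved, and the theorem will fall out of the exact sequence.
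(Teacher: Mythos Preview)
Your overall architecture is right: split into the $A$-almost alternating and $A$-adequate subcases, then deduce the $B$-side by mirroring. Parts (1) and (2) are indeed the content of \cite{DasLow:Extremal}, so the issue is (3). However, your strategy in both subcases diverges from the paper's, and in the almost-alternating case there is a real gap.

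In the $A$-almost alternating subcase you propose resolving at the dealternator. The paper does \emph{not} do this; it resolves a crossing inside the alternating tangle $R$ and inducts on the number of crossings (Theorem \ref{theorem:almostalternating}). The reason is exactly the obstacle you flag. If you resolve the dealternator, then $D_A$ is reduced alternating and the term $\ukh^{3,4-s_A(D_A)}(D_A)$ vanishes for diagonal reasons, but $\ukh^{2,3-s_A(D)}(D_B)=\ukh^{2,4-s_A(D_B)}(D_B)$ lands \emph{on} a thin diagonal of the alternating diagram $D_B$ and is typically nonzero. You would then need surjectivity of the connecting map $\partial\colon\ukh^{2,4-s_A(D_A)}(D_A)\to\ukh^{2,4-s_A(D_B)}(D_B)$, and ``tracing Lee-type canonical generators'' does not give this: Lee's generators live in the deformed theory, not in Khovanov homology itself, and the connecting map here is a genuine chain-level computation depending on the combinatorics of $R$. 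The paper sidesteps this entirely. By resolving in $R$, both $D_A$ and $D_B$ remain almost alternating (or degenerate to something simpler), so induction applies; the base case (Lemma \ref{lemma:base}) is a finite list of diagrams, and an auxiliary chain-level surjectivity argument (Lemma \ref{lemma:path2}) handles the one nontrivial term. Condition (3A) enters not as a statement about a connecting map but as the combinatorial hypothesis that keeps $D_A$ $A$-almost alternating after resolving, so the induction can continue.

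In the $A$-adequate subcase, a small correction: the generator at $(i_{\min},j_{\min})$ comes from the all-$A$ state with all $-$ labels, not the all-$B$ state. More substantively, the paper again does not analyze the near-extremal subcomplex directly except in the easy case $\girth(G_A(D))\geq 3$, where that subcomplex is literally zero (Lemma \ref{lemma:girth3}). When $\girth(G_A(D))=2$ the chain group $CKh^{2,2-s_A(D)}(D)$ is nontrivial and a direct computation is not obviously tractable; the paper instead resolves a crossing in a length-two cycle, inducts, and handles several geometric subcases (Theorem \ref{theorem:tg1}).
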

In both the $A$- and $B$-Turaev genus one cases of Theorem \ref{theorem:main}, the first two statements were proved by Dasbach and Lowrance \cite{DasLow:Extremal}; our contribution is to prove the third statement. In Example \ref{example:14}, we show that the Theorem \ref{theorem:main} implies the $14$-crossing knot in Figure \ref{figure:14ex} has Turaev genus of at least two.


The Seifert genus, or $3$-genus, $g_3(K)$ of a knot $K$ is the minimum genus of any oriented surface embedded in $S^3$ whose boundary is $K$. The smooth $4$-genus $g_4(K)$ of $K$ is the minimum genus of any oriented, smooth surface in the $4$-ball whose boundary is $K$. Rasmussen \cite{Rasmussen:Slice} used Lee's deformation of Khovanov homology \cite{Lee:Endomorphism} to define the $s$ invariant of $K$. The Rasmussen $s$ invariant is a concordance invariant and yields a lower bound on the smooth $4$-genus $g_4(K)$ of a knot: $\frac{1}{2}|s(K)|\leq g_4(K)$. If certain conditions on the number of negative crossings in an $A$- or $B$-Turaev genus one diagram are met, then Theorem \ref{theorem:main} allows us to compute $s(K)$.

For any knot or link diagram $D$, define $s_A(D)$ and $s_B(D)$ to be the number of components in the all-$A$ and all-$B$ Kauffman states of $D$ respectively. Also, let $c(D)$ be the number of crossings in $D$, and let $c_+(D)$ and $c_-(D)$ denote the number of positive $~ \tikz[baseline=.6ex, scale = .4]{
\draw[->] (0,0) -- (1,1);
\draw[->] (.3,.7) -- (0,1);
\draw (.7,.3) -- (1,0);
}~$ and negative $~ \tikz[baseline=.6ex, scale = .4]{
\draw[->] (.7,.7) -- (1,1);
\draw[->] (1,0) -- (0,1);
\draw (0,0) -- (.3,.3);
}~$ crossings in $D$ respectively.

\begin{theorem}
\label{theorem:ras}
Let $D$ be a diagram of the knot $K$.
\begin{enumerate}
    \item If $D$ is $A$-adequate, has Turaev genus one, and $c_-(D)=2$, then 
    \[s(K)=c(D)-s_A(D)-1.\]
    \item If $D$ is $A$-almost alternating and $c_-(D)=3$, then 
    \[s(K)=c(D)-s_A(D)-2.\]
    \item If $D$ is $B$-adequate, has Turaev genus one, and $c_+(D)=2$, then 
    \[s(K)=-c(D)+s_B(D)+1.\]
    \item If $D$ is $B$-almost alternating and $c_+(D)=3$, then 
    \[s(K) = -c(D)+s_B(D)+2.\]
\end{enumerate}

\end{theorem}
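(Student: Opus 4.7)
The plan is to extract $s(K)$ from the rational Lee spectral sequence by combining a sharp upper bound coming from Theorem \ref{theorem:main}(2) with a lower bound that uses Theorem \ref{theorem:main}(1) and (3) to rule out every smaller candidate value. Recall that for a knot, Lee homology has rank two over $\mathbb{Q}$ with the surviving classes concentrated at bigradings $(0, s(K)-1)$ and $(0, s(K)+1)$; since each $E_\infty^{i,j}$ is a subquotient of $Kh^{i,j}(K;\mathbb{Q})$, both of these bigradings must support nonzero rational Khovanov homology. I will present case (1) in detail and indicate how cases (2)--(4) reduce to the same template.

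For case (1), an $A$-adequate diagram $D$ with $c_-(D)=2$ has $i_{\min}(L)=-c_-(D)=-2$ and $j_{\min}(L)=c_+(D)-2c_-(D)-s_A(D)=c(D)-s_A(D)-6$, so Theorem \ref{theorem:main}(2) gives $\delta_{\min}(L)=s_A(D)-c(D)$. The survivor at $(0, s(K)+1)$ has $\delta$-value $-s(K)-1\geq \delta_{\min}(L)$, yielding the upper bound $s(K)\leq c(D)-s_A(D)-1$. Since $s(K)$ is an even integer and $c(D)-s_A(D)-1$ is even (equivalently, $j_{\min}(L)$ is odd for a knot), any strictly smaller value would satisfy $s(K)\leq c(D)-s_A(D)-3$. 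Assume this for contradiction; then the survivor at $(0, s(K)-1)$ forces $Kh^{0, s(K)-1}(K;\mathbb{Q})$ to be nonzero, while the inequality $s(K)-1\leq j_{\min}(L)+2$ restricts the possibilities to three cases, each of which is a contradiction: if $s(K)-1<j_{\min}(L)$ the group vanishes by definition of $j_{\min}(L)$; if $s(K)-1=j_{\min}(L)$ it vanishes because Theorem \ref{theorem:main}(1) concentrates $Kh^{*, j_{\min}}(L)$ at $i_{\min}(L)=-2\neq 0$; and if $s(K)-1=j_{\min}(L)+2$ it vanishes by Theorem \ref{theorem:main}(3). This forces $s(K)=c(D)-s_A(D)-1$.

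Cases (2)--(4) follow the same template. Case (2) uses the $A$-almost alternating formulas $i_{\min}(L)=1-c_-(D)=-2$ and $j_{\min}(L)=c_+(D)-2c_-(D)-s_A(D)+2=c(D)-s_A(D)-7$, which shift the upper bound to $s(K)\leq c(D)-s_A(D)-2$ and leave the three-subcase elimination argument intact. Cases (3) and (4) reduce to cases (1) and (2) by passing to the mirror $\overline{K}$, using $s(\overline{K})=-s(K)$, $c_{\pm}(\overline{D})=c_{\mp}(D)$, and $s_A(\overline{D})=s_B(D)$, together with the $B$-Turaev genus one statements in Theorem \ref{theorem:main}. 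The main obstacle I anticipate is justifying the $A$-almost alternating formulas for $i_{\min}(L)$ and $j_{\min}(L)$: the relevant extremal chain-level generator arises from $B$-resolving only the dealternator and $A$-resolving every other crossing, and this single change of resolution accounts for the shifts $i_{\min}\mapsto i_{\min}+1$ and $j_{\min}\mapsto j_{\min}+2$ compared with the $A$-adequate formulas.
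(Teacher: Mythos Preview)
Your proposal is correct and follows essentially the same line as the paper's proof. Both arguments pin down $s(K)$ by showing that $Kh^{0,j}(K;\mathbb{Q})$ can be nonzero for only two consecutive (odd) values of $j$, then invoke the fact that $Kh^{0,s(K)\pm 1}(K;\mathbb{Q})\neq 0$. The only organizational difference is that the paper obtains the two-value restriction by combining Theorem~\ref{theorem:main}(3) with the three-diagonal bound \eqref{equation:diagonal} directly, whereas you unpack the same information from the three parts of Theorem~\ref{theorem:main} separately: part (2) gives your upper bound via $\delta_{\min}$, while parts (1) and (3) together with the definition of $j_{\min}$ handle your three-subcase elimination. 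Your anticipated obstacle---the explicit formulas $i_{\min}=1-c_-(D)$ and $j_{\min}=c_+(D)-2c_-(D)-s_A(D)+2$ in the $A$-almost alternating case---is exactly the content of \cite{DasLow:Extremal} cited for parts (1) and (2) of Theorem~\ref{theorem:main}, so no additional work is needed there; the paper sidesteps even stating these formulas by working in unshifted gradings and appealing to \eqref{equation:diagonal}.
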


The next result states that for all knots considered in Theorem \ref{theorem:ras}, the smooth $4$-genus is bounded above by $\frac{1}{2}|s(K)|+1$.
\begin{theorem}
\label{theorem:genus}
Let $K$ be a knot. Suppose that $K$ has a diagram $D$ satisfying one of the following four conditions.
\begin{enumerate}
    \item $D$ is $A$-adequate, has Turaev genus one, and $c_-(D)=2$,
    \item $D$ is $B$-adequate, has Turaev genus one, and $c_+(D)=2$,
    \item $D$ is $A$-almost alternating and $c_-(D)=3$, or
    \item $D$ is $B$-almost alternating and $c_+(D)=3.$
\end{enumerate}
Then
\[\frac{|s(K)|}{2} \leq g_4(K) \leq \frac{|s(K)|}{2}+1.\]
\end{theorem}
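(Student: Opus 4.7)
The lower bound $\frac{|s(K)|}{2} \leq g_4(K)$ is an immediate consequence of Rasmussen's slice-genus inequality \cite{Rasmussen:Slice}. For the upper bound, the plan is to exhibit, in each of the four cases, an oriented smooth surface in $B^4$ with boundary $K$ whose genus is at most $\frac{|s(K)|}{2}+1$. Since the mirror image satisfies $s(\overline{K})=-s(K)$ and $g_4(\overline{K})=g_4(K)$, it suffices to handle Cases (1) and (3).

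In Case (1), the plan is to apply Seifert's algorithm to $D$ to produce a Seifert surface $F$ of $K$ with genus $g(F)=\frac{c(D)-s_{\mathrm{or}}(D)+1}{2}$, where $s_{\mathrm{or}}(D)$ denotes the number of Seifert circles of $D$. The oriented resolution of $D$ differs from the all-$A$ resolution at exactly the two negative crossings, and since $D$ is $A$-adequate, toggling the first negative crossing from its $A$- to its $B$-resolution merges two components of the all-$A$ state, yielding $s_A(D)-1$ circles. Using the canonical Turaev genus one form from Figure \ref{figure:tg1}, we argue that both arcs of the second negative crossing lie in this merged component, so that toggling the second crossing splits rather than merges, restoring the count to $s_A(D)$. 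Combined with $s(K)=c(D)-s_A(D)-1$ from Theorem \ref{theorem:ras}(1), this gives $g(F)=\frac{c(D)-s_A(D)+1}{2}=\frac{|s(K)|}{2}+1$, as required.

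In Case (3), the plan is to perform an oriented saddle at the dealternator, producing a cobordism of Euler characteristic $-1$ from $K$ to a link $L'$ whose diagram $D'$ inherits an alternating structure from the non-dealternator crossings of $D$. Gluing the Seifert surface of $D'$ (of Euler characteristic $s_{\mathrm{or}}(D')-c(D')$) to this saddle cobordism produces an oriented surface in $B^4$ bounded by $K$ with genus $\frac{c(D)-s_{\mathrm{or}}(D')+1}{2}$. Using the $A$-almost alternating structure of Definition \ref{definition:ABalmostalternating} and Figure \ref{figure:aadiagram} together with the formula $s(K)=c(D)-s_A(D)-2$ from Theorem \ref{theorem:ras}(2), we compare $s_{\mathrm{or}}(D')$ with $s_A(D)$ to conclude that the genus equals $\frac{|s(K)|}{2}+1$.

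The principal technical obstacle throughout is the combinatorial comparison between $s_{\mathrm{or}}$ and $s_A$. In Case (1), the hard part is verifying that the two negative crossings of an $A$-adequate Turaev genus one diagram are configured so that the net effect of toggling both from $A$- to $B$-resolutions is zero; in Case (3), one must track how smoothing the dealternator reorganizes the $A$-state circles of the almost alternating diagram with $c_-(D)=3$. The hypotheses $c_-(D)=2$ and $c_-(D)=3$, together with the structural pictures of Figures \ref{figure:tg1} and \ref{figure:aadiagram}, are exactly what keep this combinatorial analysis tractable.
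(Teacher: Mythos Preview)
Your Case~(1) plan is essentially the paper's: both arguments come down to showing $f(D)=s_A(D)$ and then applying the Seifert bound. However, your justification that the second toggle splits---``using the canonical Turaev genus one form from Figure~\ref{figure:tg1}''---is not an argument. The paper proves $f(D)=s_A(D)$ via a clean parity lemma (Lemma~\ref{lemma:cycle}): in the cycle decomposition of $\Gamma(D)$ induced by the all-$A$ state, negative crossings are exactly the sinks/sources, so every cycle contains an even number of them. With $c_-(D)=2$ this forces both negative crossings to lie on the same pair of all-$A$ circles, which is precisely what you need.

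Your Case~(3) plan has a genuine gap. Performing an oriented saddle at the dealternator and then capping with the Seifert surface of $D'$ produces nothing other than the Seifert surface of $D$ itself: orient-resolving one crossing first and then the rest gives the same surface as Seifert's algorithm on $D$, so your bound is $g_3(D)=\tfrac{1}{2}(c(D)-f(D)+1)$. The paper shows this works only when the dealternator is \emph{negative}: then $D_{\mathrm{alt}}$ is reduced alternating with two negative crossings, $f(D)=f(D_{\mathrm{alt}})=s_A(D_{\mathrm{alt}})=s_A(D)+1$, and $g_3(D)=\tfrac{1}{2}(c(D)-s_A(D))=\tfrac{s(K)}{2}+1$. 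When the dealternator is \emph{positive}, the Seifert bound gives only $\tfrac{s(K)}{2}+2$ (the paper says this explicitly), so your promised comparison of $s_{\mathrm{or}}(D')$ with $s_A(D)$ cannot close the gap. The paper handles the positive-dealternator case with extra structure: Lemma~\ref{lemma:aa3pos} pins $D$ down to one of two formats (Figure~\ref{figure:aa3}); in one a flype plus a Reidemeister~2 makes $K$ positive, and in the other two saddle moves produce a positive link, after which Nakamura's Theorem~\ref{theorem:Nakamura} and Inequality~\eqref{eq:g4bound2} give the required $\tfrac{s(K)}{2}+1$. You are missing this case split and the saddle-to-positive argument entirely.
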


This paper is organized as follows. In Section \ref{section:Background} we review the construction of Khovanov homology. In Section \ref{section:Near}, we study the Khovanov homology of Turaev genus one links in the first and last two polynomial gradings and prove Theorem \ref{theorem:main}.  In Section \ref{section:genus}, we prove Theorems \ref{theorem:ras} and \ref{theorem:genus}.

\section{Background}
\label{section:Background}

In this section, we give background information on the Turaev genus of a link and Khovanov homology.

\subsection{The Turaev genus of a link} Let $L$ be a nonsplit link in $S^3$, and let $D$ be a diagram of $L$ on a two-sphere $S^2$ inside $S^3$. The \textit{all-$A$ state} (respectively \textit{all-$B$ state}) of $D$ is the set of curves in $S^2$ obtained by replacing each crossing  $\tikz[baseline=.6ex, scale = .4]{
\draw (0,0) -- (.3,.3);
\draw (.7,.7) -- (1,1);
\draw (0,1) -- (1,0);
}
~$ in $D$ with its $A$-resolution $~ \tikz[baseline=.6ex, scale = .4]{
\draw[rounded corners = 1mm] (0,0) -- (.5,.45) -- (1,0);
\draw[rounded corners = 1mm] (0,1) -- (.5,.55) -- (1,1);
}~$ (respectively with its $B$-resolution $~ \tikz[baseline=.6ex, scale = .4]{
\draw[rounded corners = 1mm] (0,0) -- (.45,.5) -- (0,1);
\draw[rounded corners = 1mm] (1,0) -- (.55,.5) -- (1,1);
}~$). The Turaev surface $\Sigma_D$ of $D$ is constructed as follows. Push the all-$A$ resolution slightly to one side of $S^2$ and the all-$B$ resolution slightly to the other side. Build a cobordism between the all-$A$ state and the all-$B$ state that consists of bands away from the crossings and saddles near the crossings, as in Figure \ref{figure:saddle}. The Turaev surface $\Sigma_D$ is obtained by capping off the boundary components of this cobordism with disks.
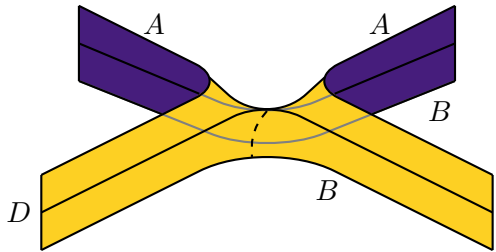
\begin{figure}[h]
$$\begin{tikzpicture}
\begin{scope}[thick]
\draw [rounded corners = 10mm] (0,0) -- (3,1.5) -- (6,0);
\draw (0,0) -- (0,1);
\draw (6,0) -- (6,1);
\draw [rounded corners = 5mm] (0,1) -- (2.5, 2.25) -- (0.5, 3.25);
\draw [rounded corners = 5mm] (6,1) -- (3.5, 2.25) -- (5.5,3.25);
\draw [rounded corners = 5mm] (0,.5) -- (3,2) -- (6,.5);
\draw [rounded corners = 7mm] (2.23, 2.3) -- (3,1.6) -- (3.77,2.3);
\draw (0.5,3.25) -- (0.5, 2.25);
\draw (5.5,3.25) -- (5.5, 2.25);
\end{scope}

\begin{pgfonlayer}{background2}
\fill [lsugold]  [rounded corners = 10 mm] (0,0) -- (3,1.5) -- (6,0) -- (6,1) -- (3,2) -- (0,1); 
\fill [lsugold] (6,0) -- (6,1) -- (3.9,2.05) -- (4,1);
\fill [lsugold] (0,0) -- (0,1) -- (2.1,2.05) -- (2,1);
\fill [lsugold] (2.23,2.28) --(3.77,2.28) -- (3.77,1.5) -- (2.23,1.5);

\fill [white, rounded corners = 7mm] (2.23,2.3) -- (3,1.6) -- (3.77,2.3);
\fill [lsugold] (2,2) -- (2.3,2.21) -- (2.2, 1.5) -- (2,1.5);
\fill [lsugold] (4,2) -- (3.7, 2.21) -- (3.8,1.5) -- (4,1.5);
\end{pgfonlayer}

\begin{pgfonlayer}{background4}
\fill [lsupurple] (.5,3.25) -- (.5,2.25) -- (3,1.25) -- (2.4,2.2);
\fill [rounded corners = 5mm, lsupurple] (0.5,3.25) -- (2.5,2.25) -- (2,2);
\fill [lsupurple] (5.5,3.25) -- (5.5,2.25) -- (3,1.25) -- (3.6,2.2);
\fill [rounded corners = 5mm, lsupurple] (5.5, 3.25) -- (3.5,2.25) -- (4,2);
\end{pgfonlayer}

\draw [thick] (0.5,2.25) -- (1.6,1.81);
\draw [thick] (5.5,2.25) -- (4.4,1.81);
\draw [thick] (0.5,2.75) -- (2.1,2.08);
\draw [thick] (5.5,2.75) -- (3.9,2.08);

\begin{pgfonlayer}{background}
\draw [black!50!white, rounded corners = 8mm, thick] (0.5, 2.25) -- (3,1.25) -- (5.5,2.25);
\draw [black!50!white, rounded corners = 7mm, thick] (2.13,2.07) -- (3,1.7)  -- (3.87,2.07);
\end{pgfonlayer}
\draw [thick, dashed, rounded corners = 2mm] (3,1.85) -- (2.8,1.6) -- (2.8,1.24);
\draw (0,0.5) node[left]{$D$};
\draw (1.5,3) node{$A$};
\draw (4.5,3) node{$A$};
\draw (3.8,.8) node{$B$};
\draw (5.3, 1.85) node{$B$};
\end{tikzpicture}$$
\caption{A saddle transitions between the all-$A$ and all-$B$ states in a neighborhood of each crossing of $D$.}
\label{figure:saddle}
\end{figure}

The genus $g_T(D)$ of the Turaev surface of $D$ is 
\[g(\Sigma_D) = \frac{1}{2}\left(2+c(D)-s_A(D)-s_B(D)\right),\]
where $c(D)$ is the number of crossings in $D$ and $s_A(D)$ and $s_B(D)$ are the number of components in the all-$A$ and all-$B$ states of $D$ respectively.
The \textit{Turaev genus} $g_T(L)$ of a nonsplit link $L$ is 
\[g_T(L) = \min\{g_T(D)~|~D~\text{is a diagram of}~L\}.\]

Turaev \cite{Turaev} proved that $g_T(D)=0$ if and only if $D$ is a connected sum of alternating diagrams, and consequently, $g_T(L)=0$ if and only if $L$ is an alternating link. Dasbach, Futer, Kalfagianni, Lin, and Stoltzfus \cite{DFKLS1,DFKLS2} showed that the Turaev surface is a Heegaard surface in $S^3$ on which the link has an alternating projection and gave Turaev surface models for computing the Jones polynomial and determinant of the link. Dasbach and Lowrance \cite{DasLow:Approach} generalized the Turaev surface model for the Jones polynomial to Khovanov homology. Lower bounds for the Turaev genus of a link arise from the Jones polynomial \cite{DasLow:Invariants, LowSp:Jones}, from Khovanov homology \cite{CKS:Graphs, DasLow:Extremal}, from knot Floer homology \cite{Lowrance:HFK}, and from the differences between certain concordance invariants \cite{DasLow:Concordance, Kim:Concordance}. Computations of Turaev genus include the $(3,q)$-torus links \cite{AbeKishimoto}, many other closed $3$-braids \cite{Lowrance:Width}, adequate knots \cite{Abe:Adequate}, many torus links on $4$, $5$, or $6$ strands \cite{Lowrance:Torus}, and connected sums of certain pretzel links \cite{Kim:Concordance}.

Kim \cite{Kim:Turaev} and Armond and Lowrance \cite{ArmLow} proved that if $L$ is a nonsplit Turaev genus one link, then it has a diagram as in Figure \ref{figure:tg1}. Links with Turaev genus one include nonalternating pretzel links on arbitrarily many strands, nonalternating Montesinos links, and almost alternating links. Although there are many diagrams with $g_T(D)=1$ that are not almost alternating, it is an open question whether every link with Turaev genus one has some almost alternating diagram; see \cite{Low:AltDist, Lowrance:Encyclopedia} for an in-depth discussion.

For more complete surveys of the Turaev surface and the Turaev genus of links see Champanerkar and Kofman \cite{CK:Survey} and Kim and Kofman \cite{KimKofman}.

\subsection{Khovanov homology}
In this subsection, we construct the Khovanov homology of a link and recall some results that will be useful for our purposes. Khovanov homology was originally defined by Khovanov \cite{Khovanov:Categorification}. The construction that follows mixes elements of Viro \cite{Viro:Khovanov} and Bar-Natan \cite{BN:Khovanov}. 

A \textit{Kauffman state} of the link diagram $D$  is the set of simple closed curves in $S^2$ obtained by replacing each crossing  $\tikz[baseline=.6ex, scale = .4]{
\draw (0,0) -- (.3,.3);
\draw (.7,.7) -- (1,1);
\draw (0,1) -- (1,0);
}
~$ in $D$ by either an $A$-resolution $~ \tikz[baseline=.6ex, scale = .4]{
\draw[rounded corners = 1mm] (0,0) -- (.5,.45) -- (1,0);
\draw[rounded corners = 1mm] (0,1) -- (.5,.55) -- (1,1);
}~$ or a $B$-resolution $~ \tikz[baseline=.6ex, scale = .4]{
\draw[rounded corners = 1mm] (0,0) -- (.45,.5) -- (0,1);
\draw[rounded corners = 1mm] (1,0) -- (.55,.5) -- (1,1);
}~$. An \textit{enhanced state} $S$ of $D$ is a Kauffman state of $D$ together with a labeling of ``$+$" or ``$-$" on each component of the Kauffman state. Define $a(S)$ and $b(S)$ to be the number of $A$- and $B$-resolutions in the enhanced state $S$ respectively, and define $\theta(S)$ to be the difference between the number of $+$ and $-$ labels of $S$.

Let $\mathcal{S}(D)$ be the set of enhanced states of $D$. Define the homological grading $i:\mathcal{S}(D)\to\mathbb{Z}$ and the polynomial grading $j:\mathcal{S}(D)\to\mathbb{Z}$ by $i(S)=b(S)$ and $j(S) = b(S) + \theta(S)$ where $S$ is an enhanced state of $D$. If $R$ is a commutative ring with identity, then define $CKh(D;R)$ to be the free $R$-module with basis $\mathcal{S}(D)$. Let $CKh^{i,j}(D;R)$ be the submodule of $CKh(D;R)$ generated by enhanced states in homological grading $i$ and polynomial grading $j$. If the ring $R$ is the integers $\mathbb{Z}$, then we drop it from the notation so that $CKh(D)=CKh(D;\mathbb{Z})$.

Changing an $A$-resolution to a $B$-resolution in a Kauffman state either merges two components together or splits one component into two. The incidence $\langle S:S'\rangle$ between two enhanced states $S$ and $S'$ is $\pm 1$ if $S'$ can be obtained from $S$ by changing one $A$-resolution to a $B$-resolution (ignoring labels) and if the labels on the two components being merged or the component being split are as in Figure \ref{figure:mergesplit}. Otherwise the incidence number $\langle S : S'\rangle$ is zero. In the case, where the incidence number is nonzero, its sign is determined by an arbitrary numbering of the crossings $1,\dots, c$. Suppose that crossing that is changed from an $A$-resolution to a $B$-resolution is crossing $k$. Then $\langle S:S'\rangle = -1$ if the number of $B$-resolutions in $S$ before crossing $k$ is odd and $\langle S: S'\rangle = 1$ if the number of $B$-resolutions in $S$ before crossing $k$ is even.
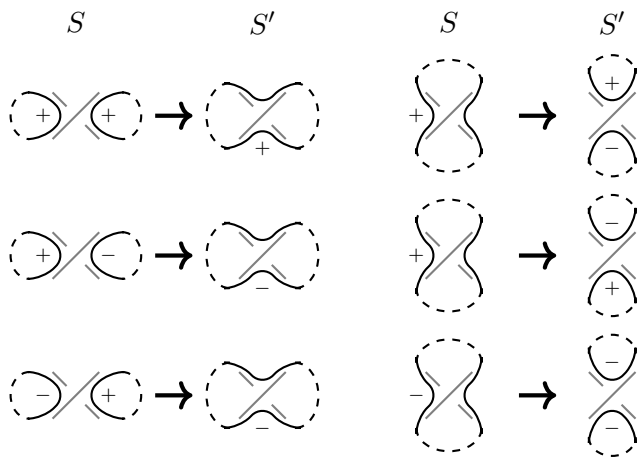
\begin{figure}[ht]
\[\begin{tikzpicture}[thick, scale = .62]

\draw (1.5,2.5) node{$S$};
\draw (5.5,2.5) node{$S'$};
\draw (9.5,2.5) node{$S$};
\draw (13,2.5) node{$S'$};


\draw[white!50!black] (1,0) -- (2,1);
\draw[white!50!black] (2,0) -- (1.7,.3);
\draw[white!50!black] (1,1) -- (1.3,.7);

\draw[rounded corners = 2mm] (.5,1) -- (.8,1) -- (1.3,.5) -- (.8,0) -- (.5,0);
\draw[rounded corners = 2mm] (2.5,1) -- (2.2,1) -- (1.7,.5) -- (2.2,0) -- (2.5,0);
\draw[dashed, thick] (.5,1) arc (90:270:.4cm and .5cm);
\draw[dashed, thick] (2.5,1) arc (90:-90:.4cm and .5cm);
\draw(.8,.5) node{\tiny{$+$}};
\draw(2.2,.5) node{\tiny{$+$}};

\begin{scope}[xshift = 4cm]
\draw[white!50!black] (1,0) -- (2,1);
\draw[white!50!black] (2,0) -- (1.7,.3);
\draw[white!50!black] (1,1) -- (1.3,.7);

\draw[rounded corners = 2mm] (.8,1.2) -- (1,1.2) -- (1.5,.7) -- (2,1.2) -- (2.2,1.2);
\draw[rounded corners = 2mm] (.8,-.2) -- (1,-.2) -- (1.5,.3) -- (2,-.2) -- (2.2,-.2);
\draw[dashed, thick] (.8,1.2) arc (90:270:.5cm and .7cm);
\draw[dashed, thick] (2.2,1.2) arc (90:-90:.5cm and .7cm); 
\draw (1.5,-.2) node{\tiny{$+$}};
\end{scope}

\draw[ultra thick, ->] (3.2,.5) -- (4,.5);

\begin{scope}[yshift = -3cm]

\draw[white!50!black] (1,0) -- (2,1);
\draw[white!50!black] (2,0) -- (1.7,.3);
\draw[white!50!black] (1,1) -- (1.3,.7);

\draw[rounded corners = 2mm] (.5,1) -- (.8,1) -- (1.3,.5) -- (.8,0) -- (.5,0);
\draw[rounded corners = 2mm] (2.5,1) -- (2.2,1) -- (1.7,.5) -- (2.2,0) -- (2.5,0);
\draw[dashed, thick] (.5,1) arc (90:270:.4cm and .5cm);
\draw[dashed, thick] (2.5,1) arc (90:-90:.4cm and .5cm);
\draw(.8,.5) node{\tiny{$+$}};
\draw(2.2,.5) node{\tiny{$-$}};

\begin{scope}[xshift = 4cm]
\draw[white!50!black] (1,0) -- (2,1);
\draw[white!50!black] (2,0) -- (1.7,.3);
\draw[white!50!black] (1,1) -- (1.3,.7);

\draw[rounded corners = 2mm] (.8,1.2) -- (1,1.2) -- (1.5,.7) -- (2,1.2) -- (2.2,1.2);
\draw[rounded corners = 2mm] (.8,-.2) -- (1,-.2) -- (1.5,.3) -- (2,-.2) -- (2.2,-.2);
\draw[dashed, thick] (.8,1.2) arc (90:270:.5cm and .7cm);
\draw[dashed, thick] (2.2,1.2) arc (90:-90:.5cm and .7cm); 
\draw (1.5,-.2) node{\tiny{$-$}};
\end{scope}

\draw[ultra thick, ->] (3.2,.5) -- (4,.5);
\end{scope}

\begin{scope}[yshift = -6cm]

\draw[white!50!black] (1,0) -- (2,1);
\draw[white!50!black] (2,0) -- (1.7,.3);
\draw[white!50!black] (1,1) -- (1.3,.7);

\draw[rounded corners = 2mm] (.5,1) -- (.8,1) -- (1.3,.5) -- (.8,0) -- (.5,0);
\draw[rounded corners = 2mm] (2.5,1) -- (2.2,1) -- (1.7,.5) -- (2.2,0) -- (2.5,0);
\draw[dashed, thick] (.5,1) arc (90:270:.4cm and .5cm);
\draw[dashed, thick] (2.5,1) arc (90:-90:.4cm and .5cm);
\draw(.8,.5) node{\tiny{$-$}};
\draw(2.2,.5) node{\tiny{$+$}};

\begin{scope}[xshift = 4cm]
\draw[white!50!black] (1,0) -- (2,1);
\draw[white!50!black] (2,0) -- (1.7,.3);
\draw[white!50!black] (1,1) -- (1.3,.7);

\draw[rounded corners = 2mm] (.8,1.2) -- (1,1.2) -- (1.5,.7) -- (2,1.2) -- (2.2,1.2);
\draw[rounded corners = 2mm] (.8,-.2) -- (1,-.2) -- (1.5,.3) -- (2,-.2) -- (2.2,-.2);
\draw[dashed, thick] (.8,1.2) arc (90:270:.5cm and .7cm);
\draw[dashed, thick] (2.2,1.2) arc (90:-90:.5cm and .7cm); 
\draw (1.5,-.2) node{\tiny{$-$}};
\end{scope}

\draw[ultra thick, ->] (3.2,.5) -- (4,.5);
\end{scope}


\begin{scope}[xshift = 8cm]

\draw[white!50!black] (1,0) -- (2,1);
\draw[white!50!black] (2,0) -- (1.7,.3);
\draw[white!50!black] (1,1) -- (1.3,.7);

\draw[rounded corners = 2mm] (.8,1.2) -- (.8,1) -- (1.3,.5) -- (.8,0) -- (.8,-.2);
\draw[rounded corners = 2mm] (2.2,1.2) -- (2.2,1) --(1.7,.5) -- (2.2,0) -- (2.2,-.2);
\draw[dashed, thick] (.8,1.2) arc (180:0:.7cm and .5cm);
\draw[dashed, thick] (.8,-.2) arc (-180:0:.7cm and .5cm);
\draw (.8,.5) node{\tiny{$+$}};

\begin{scope}[xshift = 3.5cm]
\draw[white!50!black] (1,0) -- (2,1);
\draw[white!50!black] (2,0) -- (1.7,.3);
\draw[white!50!black] (1,1) -- (1.3,.7);

\draw[rounded corners = 2mm] (1,1.4) -- (1,1.2) -- (1.5,.7)  -- (2,1.2) -- (2,1.4);
\draw[rounded corners = 2mm] (1,-.4) -- (1,-.2) -- (1.5,.3) -- (2,-.2) -- (2,-.4);
\draw [dashed, thick] (1,1.4) arc (180:0:.5cm and .4cm);
\draw [dashed, thick] (1,-.4) arc (-180:0:.5cm and .4cm);

\draw (1.5,1.2) node{\tiny{$+$}};
\draw (1.5,-.2) node{\tiny{$-$}};

\end{scope}

\draw[ultra thick, ->] (3,.5) -- (3.8,.5);

\end{scope}


\begin{scope}[xshift = 8cm, yshift = -3cm]

\draw[white!50!black] (1,0) -- (2,1);
\draw[white!50!black] (2,0) -- (1.7,.3);
\draw[white!50!black] (1,1) -- (1.3,.7);

\draw[rounded corners = 2mm] (.8,1.2) -- (.8,1) -- (1.3,.5) -- (.8,0) -- (.8,-.2);
\draw[rounded corners = 2mm] (2.2,1.2) -- (2.2,1) --(1.7,.5) -- (2.2,0) -- (2.2,-.2);
\draw[dashed, thick] (.8,1.2) arc (180:0:.7cm and .5cm);
\draw[dashed, thick] (.8,-.2) arc (-180:0:.7cm and .5cm);
\draw (.8,.5) node{\tiny{$+$}};

\begin{scope}[xshift = 3.5cm]
\draw[white!50!black] (1,0) -- (2,1);
\draw[white!50!black] (2,0) -- (1.7,.3);
\draw[white!50!black] (1,1) -- (1.3,.7);

\draw[rounded corners = 2mm] (1,1.4) -- (1,1.2) -- (1.5,.7)  -- (2,1.2) -- (2,1.4);
\draw[rounded corners = 2mm] (1,-.4) -- (1,-.2) -- (1.5,.3) -- (2,-.2) -- (2,-.4);
\draw [dashed, thick] (1,1.4) arc (180:0:.5cm and .4cm);
\draw [dashed, thick] (1,-.4) arc (-180:0:.5cm and .4cm);

\draw (1.5,1.2) node{\tiny{$-$}};
\draw (1.5,-.2) node{\tiny{$+$}};

\end{scope}

\draw[ultra thick, ->] (3,.5) -- (3.8,.5);

\end{scope}


\begin{scope}[xshift = 8cm, yshift = -6cm]

\draw[white!50!black] (1,0) -- (2,1);
\draw[white!50!black] (2,0) -- (1.7,.3);
\draw[white!50!black] (1,1) -- (1.3,.7);

\draw[rounded corners = 2mm] (.8,1.2) -- (.8,1) -- (1.3,.5) -- (.8,0) -- (.8,-.2);
\draw[rounded corners = 2mm] (2.2,1.2) -- (2.2,1) --(1.7,.5) -- (2.2,0) -- (2.2,-.2);
\draw[dashed, thick] (.8,1.2) arc (180:0:.7cm and .5cm);
\draw[dashed, thick] (.8,-.2) arc (-180:0:.7cm and .5cm);
\draw (.8,.5) node{\tiny{$-$}};

\begin{scope}[xshift = 3.5cm]
\draw[white!50!black] (1,0) -- (2,1);
\draw[white!50!black] (2,0) -- (1.7,.3);
\draw[white!50!black] (1,1) -- (1.3,.7);

\draw[rounded corners = 2mm] (1,1.4) -- (1,1.2) -- (1.5,.7)  -- (2,1.2) -- (2,1.4);
\draw[rounded corners = 2mm] (1,-.4) -- (1,-.2) -- (1.5,.3) -- (2,-.2) -- (2,-.4);
\draw [dashed, thick] (1,1.4) arc (180:0:.5cm and .4cm);
\draw [dashed, thick] (1,-.4) arc (-180:0:.5cm and .4cm);

\draw (1.5,1.2) node{\tiny{$-$}};
\draw (1.5,-.2) node{\tiny{$-$}};

\end{scope}

\draw[ultra thick, ->] (3,.5) -- (3.8,.5);

\end{scope}
\end{tikzpicture}\]
        \caption{\textbf{Left.} Merges with incidence number $\pm 1$. \textbf{Right.} Splits with incidence number $\pm 1$.}
        \label{figure:mergesplit}
\end{figure}

The differential $d^{i,j}:CKh^{i,j}(D;R)\to CKh^{i+1,j}(D;R)$ is defined by 
\[d^{i,j}(S)=\sum_{S'\in \mathcal{S}(D)} \langle S: S'\rangle S,\]
and extending linearly. The unshifted Khovanov homology $\ukh^{i,j}(D;R)$ is defined as 
\[\ukh^{i,j}(D;R) = \operatorname{ker} d^{i,j} / \operatorname{im} d^{i-1,j}.\]

Suppose that $D$ is a diagram of a link $L$ with $c_+$ positive crossings and $c_-$ negative crossings. The \textit{Khovanov homology} of $D$, denoted by $Kh(D;R)$ or equivalently by $Kh(L;R)$, is defined as $Kh(D;R)=\bigoplus_{i,j\in\mathbb{Z}} Kh^{i,j}(D;R)$ where $Kh^{i,j}(D;R) = \ukh^{i+c_-,j+c_+-2c_-}(D;R).$ 

The Khovanov homology of a link is a link invariant, but the unshifted Khovanov homology $\ukh(D)$ is not. Reidemeister moves can induce grading shifts in $\ukh(D;R)$. We record those shifts in the following lemma.
\begin{lemma}
\label{lemma:Reidemeister}
The grading shifts of unshifted Khovanov homology $\ukh(D;R)$ induced by Reidemeister moves and flypes are as follows.
  \begin{itemize}
        \item Positive Reidemeister move I: $\Kh^{i,j}\left(~
\tikz[baseline=.6ex, scale = .6]{
\draw[thick] (0,0) -- (0,1);
}~\right) \cong \Kh^{i,j-1}\left(~
\tikz[baseline = .6ex, scale = .6]{
\draw[thick, rounded corners = .8mm] (0,0) -- (0,.5) -- (.2,.7) -- (.4,.5) -- (.2,.3) -- (.1,.4);
\draw[thick, rounded corners = .8mm] (-.03,.55) -- (-.1,.7) -- (0,1);
}\right).$
\item Negative Reidemeister move I: $\Kh^{i,j}\left(~
\tikz[baseline=.6ex, scale = .6]{
\draw[thick] (0,0) -- (0,1);
}~\right) \cong \Kh^{i+1,j+2}\left(~
\tikz[baseline = .6ex, scale = .6]{
\draw[thick, rounded corners = .8mm] (0,1) -- (0,.5) -- (.2,.3) -- (.4,.5) -- (.2,.7) -- (.1,.6);
\draw[thick, rounded corners = .8mm] (-.03,.45) -- (-.1,.3) -- (0,0);
}\right).$ 
\item Reidemeister move II: $\Kh^{i,j}\left(~
\tikz[baseline=.6ex, scale = .6]{
\draw[thick] (0,0) -- (0,1);
\draw[thick] (0.5,0) -- (0.5,1);
}~\right) \cong \Kh^{i+1,j+1}\left(~
\tikz[baseline=.6ex, scale = .6]{
\draw[thick, rounded corners = 2mm] (0,0) -- (.5,.5) -- (0,1);
\draw[thick] (.5,1) -- (.3,.8);
\draw[thick] (.3,.2) -- (.5,0);
\draw[thick, rounded corners = 1mm] (.2,.7) -- (0,.5) -- (.2,.3);
}~\right).$

\item Reidemeister move III or a flype does not change unshifted Khovanov homology $\ukh(D)$.
    \end{itemize}
\end{lemma}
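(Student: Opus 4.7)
The plan is to deduce each grading shift directly from the link invariance of the shifted Khovanov homology $Kh(D;R)$, recalled in the paragraph preceding the lemma, together with the grading conversion
\[
Kh^{i,j}(D;R) \cong \ukh^{i+c_-(D),\, j+c_+(D)-2c_-(D)}(D;R).
\]
For each move appearing in the lemma I would let $D$ denote the diagram on the left of the stated equivalence and $D'$ the diagram on the right, read off from the picture how $c_+$ and $c_-$ change, and then use the link invariance $Kh^{i,j}(D;R)\cong Kh^{i,j}(D';R)$ together with the conversion formula to write down the induced isomorphism between $\ukh(D;R)$ and $\ukh(D';R)$ in the appropriately shifted bigrading.

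For the two Reidemeister I moves the added curl carries exactly one crossing whose sign is fixed by the picture, so only one of $c_+$ and $c_-$ changes by one and the resulting shift in $\ukh$ is recorded in the first two bullets. For the Reidemeister II move the two new crossings form an oppositely signed pair, so $c_+$ and $c_-$ each increase by one and the grading conversion produces the shift $(1,1)$ in the third bullet. For Reidemeister III and for flypes, both $c_+$ and $c_-$ are preserved individually, so the conversion collapses and $\ukh(D;R)$ is isomorphic to $\ukh(D';R)$ bigrading-by-bigrading.

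The main point requiring a small argument is the preservation of the individual counts $c_+$ and $c_-$ under a flype, since a flype might a priori permute or reverse crossings in a nontrivial way. I would resolve this by noting that a flype is realized as a $\pi$-rotation of an alternating tangle together with a planar isotopy of a single external crossing; the rotation preserves both the orientation and the over/under data of every crossing in the tangle, and the isotopy preserves the external crossing, so each crossing sign is preserved individually. With this in hand, the four bullets of the lemma follow from link invariance of $Kh(L;R)$ and the grading conversion formula, completing the argument.
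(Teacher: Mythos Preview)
The paper does not supply a proof of this lemma; it is stated as a recorded fact, with no argument given between the end of the itemize environment and the subsequent paragraph on mirror images. Your approach---deducing each shift from the link invariance of the shifted theory $Kh(L;R)$ together with the conversion $Kh^{i,j}(D;R)\cong\ukh^{i+c_-,\,j+c_+-2c_-}(D;R)$---is exactly the standard way to obtain these formulas, and it is correct. The observations you need are all sound: a Reidemeister~I kink has a crossing sign independent of orientation (reversing the single strand reverses both arcs at the crossing and preserves the sign), a Reidemeister~II pair always consists of one positive and one negative crossing, and Reidemeister~III and flypes preserve $c_+$ and $c_-$ individually.

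One small correction: in your flype argument you write ``a $\pi$-rotation of an alternating tangle,'' but a flype applies to an arbitrary tangle, not only an alternating one. The rest of your reasoning there is fine: the $\pi$-rotation is an orientation-preserving rigid motion of the tangle in $3$-space, so it preserves the sign of every crossing inside the tangle, and the single external crossing is carried to a crossing of the same sign by the planar isotopy. With that adjustment, your proof is complete and is what the paper would have written had it included one.
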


The unshifted Khovanov homology of the mirror image $\overline{D}$ of $D$ satisfies
\begin{align}
\begin{split}
\label{equation:Mirror}
\underline{Kh}^{i,j}(\overline{D};\mathbb{Q}) \cong & \; \underline{Kh}^{c-i,c-j}(D;\mathbb{Q})~\text{and}\\
\operatorname{Tor}\underline{Kh}^{i,j}(\overline{D}) \cong & \; \operatorname{Tor} \underline{Kh}^{c-i+1,c-j}(D),
\end{split}
\end{align}
where $c$ is the number of crossings in $D$.

Let $D_A$ and $D_B$ be the diagrams obtained from $D$ by choosing an $A$- and $B$-resolution respectively at a specified crossing. There are natural inclusions $\iota_A:\mathcal{S}(D_A)\hookrightarrow\mathcal{S}(D)$ and $\iota_B:\mathcal{S}(D_B)\hookrightarrow\mathcal{S}(D)$ such that $\iota:\mathcal{S}(D_A)\sqcup \mathcal{S}(D_B) \to \mathcal{S}(D)$ defined by $\iota(S) =\iota_A(S)$ if $S\in \mathcal{S}(D_A)$ and $\iota(S)=\iota_B(S)$ if $S\in\mathcal{S}(D_B)$ is a bijection. Define functions $f:CKh(D_B)\to CKh(D)$ and $g:CKh(D)\to CKh(D_A)$ by $f(S)=\iota(S)$ and $g(S)=\iota^{-1}(S)$ when $\iota^{-1}(S)\in S_A$ and $g(S)=0$ otherwise, and extending linearly. There is a short exact sequence of chain complexes
\[0 \rightarrow CKh(D_B) \rightarrow CKh(D) \rightarrow CKh(D_A) \rightarrow 0\]
yielding the following long exact sequence in Khovanov homology \cite{Khovanov:Categorification}.
\begin{theorem}[Khovanov]
\label{theorem:LES}
Let $D$ be a link diagram with $D_A$ and $D_B$ being the $A$- and $B$-resolutions respectively of $D$ at a chosen crossing. For each $j\in\mathbb{Z}$, there is a long exact sequence of unshifted Khovanov homology
\begin{equation}
\cdots \to \underline{Kh}^{i-1,j-1}(D_B) \xrightarrow{f_*} \underline{Kh}^{i,j}(D) \xrightarrow{g_*} \underline{Kh}^{i,j}(D_A) \xrightarrow{\partial} \underline{Kh}^{i,j-1}(D_B)\to\cdots
\label{equation:LES}
\end{equation}
where $\partial$ is the boundary map in the snake lemma.
\end{theorem}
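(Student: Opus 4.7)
The plan is to obtain the long exact sequence as the standard zigzag (snake) lemma output from the short exact sequence of chain complexes displayed just above the theorem statement; the only work is bookkeeping about bigradings, signs, and verification that $f$ and $g$ are chain maps.

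First I would carefully record the grading shifts. If $S \in \mathcal{S}(D_B)$ has $b(S) = b$ and $\theta(S) = \theta$, then $\iota_B(S) \in \mathcal{S}(D)$ has one additional $B$-resolution (at the distinguished crossing) and the same $\pm$ labelings, so $b(\iota_B(S)) = b + 1$ and $\theta(\iota_B(S)) = \theta$. Consequently $f$ sends the bigraded summand $CKh^{i-1,j-1}(D_B)$ into $CKh^{i,j}(D)$. The map $g$ is bigrading-preserving since $\iota_A^{-1}$ alters neither $b$ nor $\theta$ on states whose distinguished crossing carries an $A$-resolution. Together with the bijection $\iota:\mathcal{S}(D_A)\sqcup\mathcal{S}(D_B)\to\mathcal{S}(D)$, this yields the bigraded short exact sequence
\[0 \to CKh^{i-1,j-1}(D_B) \xrightarrow{f} CKh^{i,j}(D) \xrightarrow{g} CKh^{i,j}(D_A) \to 0\]
for each $i,j$, with $f$ injective, $g$ surjective, and $\operatorname{ker}g=\operatorname{im}f$ by construction.

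Next I would verify that $f$ and $g$ commute with the Khovanov differential. Because the Khovanov differential only turns $A$-resolutions into $B$-resolutions, the subset $\iota_B(\mathcal{S}(D_B))\subset\mathcal{S}(D)$ is closed under $d_D$, so $f$ is a chain map and $g$ is well-defined on the quotient. To match $d_D\circ f$ with $f\circ d_{D_B}$ on the nose, I would order the crossings of $D$ so that the distinguished crossing is last and inherit the induced ordering on $D_B$ and $D_A$; with this choice the sign $\langle S:S'\rangle$ of each resolution change at a non-distinguished crossing depends only on the number of $B$-resolutions at crossings before it, a number unaffected by adding or ignoring the distinguished crossing. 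A similar check handles $g$, where the only term of $d_D(S)$ not already present in $d_{D_A}(g(S))$ is the one that flips the distinguished crossing, and that term lies in $\iota_B(\mathcal{S}(D_B))$ and hence in $\ker g$.

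Having established the short exact sequence of chain complexes, I would invoke the standard cohomological zigzag lemma to produce, for each $j$, a long exact sequence whose connecting homomorphism raises the homological grading by $1$. Tracking the $(+1,+1)$ shift hidden in $f$, the connecting map $\partial$ on the $A$-resolution cohomology $\ukh^{i,j}(D_A)$ lands in $\ukh^{i,j-1}(D_B)$, giving precisely the sequence in \eqref{equation:LES}. The main obstacle, and essentially the only nontrivial step, is the sign bookkeeping for the chain-map property of $f$; once the crossing ordering is chosen as above (or an equivalent sign $(-1)^{b_{<k}(S)}$ is inserted into the definition of $f$), everything else is a direct application of homological algebra.
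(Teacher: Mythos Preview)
The paper does not supply its own proof of this theorem: it attributes the result to Khovanov, records the short exact sequence $0 \to CKh(D_B) \to CKh(D) \to CKh(D_A) \to 0$, and simply asserts that it yields the long exact sequence. Your proposal fills in precisely the details the paper omits---the bigrading bookkeeping, the chain-map verification via a convenient crossing ordering, and the invocation of the zigzag lemma---and is correct; it is exactly the argument the paper is gesturing at.
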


Lee \cite{Lee:Endomorphism} proved that the Khovanov homology of a nonsplit alternating link is supported in two adjacent $\delta = 2i-j$ gradings. More specifically, she showed that if $L$ is a nonsplit alternating link, then $Kh^{i,j}(L)$ is trivial unless $2i-j = \sigma(L)\pm 1$ where $\sigma(L)$ is the signature of $L$.  Asaeda and Przytycki \cite{AP} proved that the Khovanov homology of a nonsplit almost alternating link is supported in at most three adjacent $\delta=2i-j$ gradings; Champanerkar and Kofman \cite{CK:Span} gave a different approach to this result using a spanning tree model for Khovanov homology. Champanerkar, Kofman, and Stoltzfus \cite{CKS:Graphs} proved that the Khovanov homology of a nonsplit Turaev genus one link  is supported in at most three adjacent $\delta=2i-j$ gradings. Dasbach and Lowrance \cite{DasLow:Concordance} showed that if $L$ is nonsplit, has Turaev genus one, and $Kh^{i,j}(L)$ is nontrivial, then
\begin{equation}
\label{equation:diagonal}
s_A(D) - 2 \leq 2i -j \leq s_A(D)+2.
\end{equation}

\section{Near extremal Khovanov homology}
\label{section:Near}
In this section, we prove Theorem \ref{theorem:main}. Because the mirror image of an $A$-Turaev genus one diagram is a $B$-Turaev genus one diagram, we focus our attention on $A$-Turaev genus one diagrams. Our proof of Theorem \ref{theorem:main} breaks down into two cases; the link diagram is $A$-almost alternating or $A$-adequate and Turaev genus one.

\subsection{$A$-almost alternating diagrams}

Theorem \ref{theorem:almostalternating} below implies Theorem \ref{theorem:main} for almost alternating links. Its proof is broken up over several lemmas; the strategy of the proof is as follows. Let $D$ be an $A$-almost alternating diagram labeled as in Figure \ref{figure:aadiagram}. We examine the long exact sequence obtained by choosing a crossing in the tangle $R$ and taking the $A$- and $B$-resolutions of $D$ at that crossing, called $D_A$ and $D_B$ respectively. There are four cases to consider.
\begin{enumerate}
\item For every crossing in $R$, both $D_A$ and $D_B$ are not $A$-almost alternating.

\item The tangle $R$ contains a crossing such that $D_A$ is $A$-almost alternating, but $D_B$ is not $A$-almost alternating.

\item The tangle $R$ contains a crossing such that $D_B$ is $A$-almost alternating, but $D_A$ is not $A$-almost alternating.

\item The tangle $R$ contains a crossing such that both $D_A$ and $D_B$ are $A$-almost alternating.

\end{enumerate}

Lemma \ref{lemma:base} proves the desired result when $D$ satisfies case (1). For the remaining three cases, we consider the long exact sequence in Khovanov homology for $D$, $D_A$, and $D_B$. The relevant portion of the long exact sequence is 
\[ \cdots\to\ukh^{2,2-s_A(D_B)}(D_B)\to\ukh^{3,4-s_A(D)}(D)\to\ukh^{3,4-s_A(D_A)}(D_A)\to\cdots. \]
Lemma \ref{lemma:path2} proves that $\ukh^{2,2-s_A(D_B)}(D_B)$ is trivial. The remainder of the proof argues on a case-by-case basis why $\ukh^{3,4-s_A(D_A)}(D_A)$ is also trivial, and so consequently $\ukh^{3,4-s_A(D)}(D)$ is trivial as well. In certain instances, it is easier to assume that the crossing where the resolution takes place is not part of a twist region of length greater than one. Lemma \ref{lemma:twist} allows us to lift the result from a resolution where the twist region is length one to a resolution where the twist region has arbitrary length.

Several of the proofs below use checkerboard graphs of link diagrams. The complementary regions of a knot diagram can be colored black and white so that at every crossing, exactly two of the four regions are white and so that no two regions that share an edge are the same color. A \textit{checkerboard graph} of a knot diagram $D$ is the graph whose vertices correspond to either the white or the black regions, whose edges correspond to the crossings, and where two vertices are connected by an edge corresponding to a crossing if the two regions corresponding to those vertices have the crossing in their border.

Lemma \ref{lemma:base} shows that if $D$ is $A$-almost alternating, then $\ukh^{3,4-s_A(D)}(D)$ is trivial when neither $D_A$ nor $D_B$ are $A$-almost alternating for every crossing in the alternating tangle $R$. It is one of the cases in the proof of Theorem \ref{theorem:almostalternating}. 
\begin{lemma}
\label{lemma:base}
Let $D$ be an $A$-almost alternating diagram such that for every crossing in the alternating tangle $R$, neither of the resolutions $D_A$ nor $D_B$ of that crossing are $A$-almost alternating. Then $\ukh^{3,4-s_A(D)}(D)$ is trivial.
\end{lemma}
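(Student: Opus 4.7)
The plan is to prove Lemma \ref{lemma:base} by first characterizing the structure of diagrams $D$ satisfying the hypothesis and then computing $\ukh^{3, 4-s_A(D)}(D)$ directly for each resulting configuration.

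First I would analyze what the hypothesis forces. Since $R$ is an alternating tangle, resolving a crossing of $R$ produces an alternating tangle with one fewer crossing, and the resulting $D_A$ or $D_B$ retains the same dealternator. The requirement that neither $D_A$ nor $D_B$ be $A$-almost alternating for any crossing of $R$ means each resolution either produces an alternating diagram --- for instance when the dealternator becomes nugatory or is absorbed by a Reidemeister move --- or else violates one of conditions (1), (2), or (3A) of Definition \ref{definition:ABalmostalternating}. I would use the checkerboard graph of $D$ to track how white regions in $R$ adjacent to $u_1$ or $u_2$ merge under $A$-resolutions and how black regions adjacent to $v_1$ or $v_2$ merge under $B$-resolutions, and thereby argue that $D$ falls into a short list of structural configurations, presumably with $R$ having very few crossings or a highly constrained topology.

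Next I would examine the chain group $CKh^{3, 4-s_A(D)}(D)$. Its enhanced states $S$ satisfy $b(S) = 3$ and $\theta(S) = 1 - s_A(D)$, which places the bigrading on the extremal diagonal $2i - j = s_A(D) + 2$ allowed by equation \eqref{equation:diagonal}. States on this extremal diagonal are highly constrained: they must carry as many $-$ labels as possible, and the three $B$-resolutions must be positioned so the associated Kauffman state has enough circles to support $\theta(S) = 1 - s_A(D)$. For each structural configuration identified in the first step, I would enumerate the enhanced states in this bigrading together with those in $CKh^{2, 4-s_A(D)}(D)$, and analyze the incidence numbers of the differential between them.

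Finally, I would verify that the homology vanishes, either by showing that no cycles exist in $CKh^{3, 4-s_A(D)}(D)$ or by exhibiting each generator of $\ker d^{3,4-s_A(D)}$ as the image of an explicit chain in $CKh^{2, 4-s_A(D)}(D)$. Depending on the classification, I might also deduce the vanishing from Lee's theorem that the Khovanov homology of an alternating link is supported on two adjacent $\delta$-gradings, applied to the alternating diagrams produced by resolutions of $D$.

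The main obstacle will be the first step: translating the combinatorial hypothesis into a usable structural description of $D$. A crossing of $R$ can fail to yield an $A$-almost alternating resolution for several different reasons, so the condition is globally restrictive but locally flexible. I anticipate the classification reduces to a small list of canonical diagrams --- plausibly only those with $R$ of very low crossing number or with $u_1, u_2$ tightly confined inside $R$ --- for which the chain-level analysis is essentially routine once the underlying configurations are in hand.
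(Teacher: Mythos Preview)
Your proposal is correct and follows essentially the same approach as the paper: classify the diagrams via their checkerboard graphs, then verify the vanishing for each configuration in the resulting finite list. The paper's execution sharpens your first step by observing that the hypothesis forces every edge of $G$ in $R$ to lie on a length-three path from $u_1$ to $u_2$ and every edge of $\overline{G}$ in $R$ to lie on a length-two path from $v_1$ to $v_2$, which by planarity yields exactly four explicit diagrams (of the right-handed trefoil, the $(2,4)$-torus link twice, and the mirror of $L_{6n1}$); the vanishing then follows from Lee's theorem for the three alternating links and from a direct Khovanov homology table for $L_{6n1}$, so no chain-level computation is needed.
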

\begin{proof}
Let $G$ be the checkerboard graph that contains vertices $u_1$ and $u_2$, and let $\overline{G}$ be the checkerboard graph of $D$ that contains vertices $v_1$ and $v_2$. Fix a crossing $x$ in the alternating tangle $R$ with resolutions $D_A$ and $D_B$ of $D$. By assumption, neither $D_A$ nor $D_B$ is $A$-almost alternating. Therefore the edge in $G$ associated with the crossing $x$ lies on a path of length three between $u_1$ and $u_2$, and the edge in $\overline{G}$ associated with the crossing $x$ lies on a path of length two between $v_1$ and $v_2$.  Let $G'$ and $\overline{G}'$ be the simplifications of $G$ and $\overline{G}$ respectively. The simplification $G'$ of $G$ is the graph that has the same vertex set as $G$ but multiple edges in $G$ are replaced with a single edge in $G'$. If $G'$ contains more than two paths of length three between $u_1$ and $u_2$, then $\overline{G}'$, and hence $\overline{G}$, does not contain a path of length two between $v_1$ and $v_2$. Therefore, $G'$ is one of the four graphs in the first column of Figure \ref{figure:BaseCase}.
\begin{figure}
\[\begin{tikzpicture}
\fill (0,0) circle (.1cm);
\fill (1,0) circle (.1cm);
\fill (2,0) circle (.1cm);
\fill (3,0) circle (.1cm);

\draw (0,0) -- (3,0);

\draw (0,0) arc (270:90:0.4);
\draw (3,0) arc (-90:90:0.4);
\draw (0,.8) -- (3,.8);

\draw (-.3,-.3) node{$u_2$};
\draw (3.3,-.3) node{$u_1$};

\begin{scope}[xshift = 5cm]
\fill (0,0) circle (.1cm);
\fill (1,0) circle (.1cm);
\fill (2,0) circle (.1cm);
\fill (3,0) circle (.1cm);

\draw (0,0) to [out = 60, in = 120] (1,0);
\draw (0,0) to [out = -60, in = -120] (1,0);

\draw (1,0) to [out = 60, in = 120] (2,0);
\draw (1,0) to [out = -60, in = -120] (2,0);

\draw (2,0) to [out = 60, in = 120] (3,0);
\draw (2,0) to [out = -60, in = -120] (3,0);

\draw (0,0) arc (270:90:0.4);
\draw (3,0) arc (-90:90:0.4);
\draw (0,.8) -- (3,.8);

\draw[blue] (1.5,.5) -- (1.5,-.5);
\draw[blue] (1.5,.5) to [out = 180, in = 90] (0.5,0) to [out = 270, in = 180] (1.5,-.5) to [out = 0, in = 270] (2.5,0) to [out = 90, in = 0] (1.5,.5);

\draw[blue] (1.5,.5) to [out = 90, in = 0] (0,1) to [out = 180, in = 90] (-.7,0) to [out = 270, in = 180] (0,-1) to [out = 0, in = 270] (1.5,-.5);

\fill[white] (1.5,.5) circle (.1cm);
\draw (1.5,.5) circle (.1cm);

\fill[white] (1.5,-.5) circle (.1cm);
\draw (1.5,-.5) circle (.1cm);

\fill[white] (.5,0) circle (.1cm);
\draw (.5,0) circle (.1cm);

\fill[white] (1.5,0) circle (.1cm);
\draw (1.5,0) circle (.1cm);

\fill[white] (2.5,0) circle (.1cm);
\draw (2.5,0) circle (.1cm);

\draw (-.3,-.3) node{$u_2$};
\draw (3.3,-.3) node{$u_1$};

\draw (1.8,.64) node{$v_1$};
\draw (1.8, -.67) node{$v_2$};

\end{scope}

\begin{scope}[xshift = 10cm]

\begin{knot}[
    consider self intersections,
    clip width = 4,
    ignore endpoint intersections = true,
    end tolerance = 2pt
    ]
    \flipcrossings{3, 7, 4}
    \strand (1.5,1.5) to [out = 180, in = 90]
    (-.5,.5) to [out = 270, in = 180]
    (.5,-.5) to [out = 0, in = 270]
    (1.25,0) to [out = 90, in = 0]
    (.5,.5) to [out = 180, in = 90]
    (-.5,-.5) to [out = 270, in = 180]
    (1.5,-1.5) to [out = 0, in = 270]
    (3.5,-.75) to [out = 90, in = 270]
    (2.75,0) to [out = 90, in = 270]
    (3.5,.75) to [out = 90, in = 0]
    (1.5,1.5);
    \strand (1.5,.6) to [out = 180, in = 90]
    (.75,0) to [out =270, in = 180]
    (1.5,-.6) to [out = 0, in = 270]
    (2.25,0) to [out = 90, in = 0]
    (1.5,.6);
        \strand (2.5,.6) to [out = 180, in = 90]
    (1.75,0) to [out =270, in = 180]
    (2.5,-.6) to [out = 0, in = 270]
    (3.25,0) to [out = 90, in = 0]
    (2.5,.6);
    
    \end{knot}
    
    \draw (-.8,0) node {$u_1$};
    \draw (.3,0) node {$u_2$};
    \draw (.3,.8) node {$v_1$};
    \draw (.3,-.8) node {$v_2$};

\end{scope}

\begin{scope}[yshift = -4cm]
\fill (0,0) circle (.1cm);
\fill (1,.5) circle (.1cm);
\fill (2,.5) circle (.1cm);
\fill (1,-.5) circle (.1cm);
\fill (2,-.5) circle (.1cm);
\fill (3,0) circle (.1cm);

\draw (0,0) -- (1,.5) -- (2,.5) -- (3,0) -- (2,-.5) -- (1,-.5) -- (0,0);

\draw (0,0) to [out = 180, in = 270]
(-.5,.5) to [out = 90, in = 180]
(1.5,1.5) to [out = 0, in = 90]
(3.5,.5) to [out = 270, in = 0]
(3,0);

\draw (-.3,-.3) node{$u_2$};
\draw (3.3,-.3) node{$u_1$};

\begin{scope}[xshift = 5cm]
\fill (0,0) circle (.1cm);
\fill (1,.5) circle (.1cm);
\fill (2,.5) circle (.1cm);
\fill (1,-.5) circle (.1cm);
\fill (2,-.5) circle (.1cm);
\fill (3,0) circle (.1cm);

\draw (0,0) -- (1,.5) -- (2,.5) -- (3,0) -- (2,-.5) -- (1,-.5) -- (0,0);

\draw (0,0) to [out = 180, in = 270]
(-.5,.5) to [out = 90, in = 180]
(1.5,1.5) to [out = 0, in = 90]
(3.5,.5) to [out = 270, in = 0]
(3,0);

\draw (-.3,-.3) node{$u_2$};
\draw (3.3,-.3) node{$u_1$};

\draw[blue] (1.5,1) -- (1.5,-1);
\draw[blue] (1.5,1) to [out = 180, in = 90] (0.5,0) to [out = 270, in = 180] (1.5,-1) to [out = 0, in = 270] (2.5,0) to [out = 90, in = 0] (1.5,1);

\draw[blue] (1.5,1) to [out = 90, in = 90] (-.7,0) to [out = 270, in = 270]  (1.5,-1);

\fill[white] (1.5,1) circle (.1cm);
\draw (1.5,1) circle (.1cm);

\fill[white] (1.5,-1) circle (.1cm);
\draw (1.5,-1) circle (.1cm);

\fill[white] (.5,0) circle (.1cm);
\draw (.5,0) circle (.1cm);

\fill[white] (1.5,0) circle (.1cm);
\draw (1.5,0) circle (.1cm);

\fill[white] (2.5,0) circle (.1cm);
\draw (2.5,0) circle (.1cm);

\draw (1.8,1.2) node{$v_1$};
\draw (1.8, -1.2) node{$v_2$};

\end{scope}

\begin{scope}[xshift = 10.5cm]

\begin{knot}[
    consider self intersections,
    clip width = 4,
    ignore endpoint intersections = true,
    end tolerance = 2pt,
    ]
    \flipcrossings{3, 7, 5}
    \strand (1.5,1.5) to [out = 180, in = 90]
    (-.5,.5) to [out = 270, in = 180]
    (.5,-.7) to [out = 0, in = 180]
    (1.5,-.25) to [out = 0, in = 180]
    (2.5,-.75) to [out = 0, in = 270]
    (3,0) to [out = 90, in = 0]
    (2.5,.75) to [out = 180, in = 0]
    (1.5,.25) to [out = 180, in = 0]
    (.5,.75) to [out = 180, in = 90]
    (-.5,-.5) to [out = 270, in = 180]
    (1.5,-1.5) to [out = 0, in =270]
    (3,-.5) to [out = 90, in = 0]
    (2.5,-.25) to [out = 180, in =0]
    (1.5,-.7) to [out = 180, in = 270]
    (1,0) to [out = 90, in = 180]
    (1.5,.75) to [out = 0, in = 180]
    (2.5,.25) to [out = 0, in = 270]
    (3,.5) to [out = 90, in = 0]
    (1.5,1.5);
\end{knot}
    
    \draw (-.8,0) node {$u_1$};
    \draw (.3,0) node {$u_2$};
    \draw (.3,.9) node {$v_1$};
    \draw (.3,-.9) node {$v_2$};

\end{scope}
\end{scope}

\begin{scope}[yshift= - 8cm]
\fill (0,0) circle (.1cm);
\fill (1,0) circle (.1cm);
\fill (2,.5) circle (.1cm);
\fill (2,-.5) circle (.1cm);
\fill (3,0) circle (.1cm);

\draw (0,0) -- (1,0) -- (2,.5) -- (3,0) -- (2,-.5) -- (1,0);

\draw (0,0) to [out = 180, in = 270]
(-.5,.5) to [out = 90, in = 180]
(1.5,1.5) to [out = 0, in = 90]
(3.5,.5) to [out = 270, in = 0]
(3,0);

\draw (-.3,-.3) node{$u_2$};
\draw (3.3,-.3) node{$u_1$};

\begin{scope}[xshift = 5 cm]

\fill (0,0) circle (.1cm);
\fill (1,0) circle (.1cm);
\fill (2,.5) circle (.1cm);
\fill (2,-.5) circle (.1cm);
\fill (3,0) circle (.1cm);

\draw (1,0) -- (2,.5) -- (3,0) -- (2,-.5) -- (1,0);
\draw (0,0) to [out = 60, in = 120] (1,0);
\draw (0,0) to [out = -60, in = -120] (1,0);

\draw (0,0) to [out = 180, in = 270]
(-.5,.5) to [out = 90, in = 180]
(1.5,1.5) to [out = 0, in = 90]
(3.5,.5) to [out = 270, in = 0]
(3,0);

\draw[blue] (1.5,1) to [out = 270, in = 135] (1.5,.25) to [out = -45, in = 135] (2,0);
\draw[blue] (1.5,1) to [out = 0, in =45] (2.5,.25) to [out = 225,in=45] (2,0);
\draw[blue] (1.5,-1) to [out = 90, in = 225] (1.5,-.25) to [out = 45, in =225] (2,0);
\draw[blue] (1.5,-1) to [out = 0, in =-45] (2.5,-.25) to [out = 135,in = -45] (2,0);
\draw[blue] (1.5,1) to [out = 180, in = 90] (0.5,0) to [out = 270, in = 180] (1.5,-1);

\draw[blue] (1.5,1) to [out = 90, in = 90] (-.7,0) to [out = 270, in = 270]  (1.5,-1);

\fill[white] (1.5,1) circle (.1cm);
\draw (1.5,1) circle (.1cm);

\fill[white] (1.5,-1) circle (.1cm);
\draw (1.5,-1) circle (.1cm);

\draw (-.3,-.3) node{$u_2$};
\draw (3.3,-.3) node{$u_1$};
\draw (1.8,1.2) node{$v_1$};
\draw (1.8,-1.2) node{$v_2$};

\fill[white] (0.5,0) circle (.1cm);
\draw (0.5,0) circle (.1cm);
\fill[white] (2,0) circle (.1cm);
\draw (2,0) circle (.1cm);

\end{scope}

\begin{scope}[xshift = 10.5 cm]

\begin{knot}[
    consider self intersections,
    clip width = 4,
    ignore endpoint intersections = true,
    end tolerance = 2pt
    ]
    \flipcrossings{3, 6, 4}
    \strand (1.5,1.5) to [out = 180, in = 90]
    (-.5,.5) to [out = 270, in = 180]
    (.5,-.5) to [out = 0, in = 270]
    (1.25,0) to [out = 90, in = 0]
    (.5,.5) to [out = 180, in = 90]
    (-.5,-.5) to [out = 270, in = 180]
    (1.5,-1.5) to [out = 0, in = 270]
    (3,-.75) to [out = 90, in = 0]
    (2.5,-.25) to [out = 180, in = 0]
    (1.5,-.75) to [out = 180, in = 270]
    (.75,0) to [out = 90, in = 180]
    (1.5,.75) to [out = 0, in = 180]
    (2.5,.25) to [out = 0, in = 270]
    (3,.75) to [out = 90, in = 0]
    (1.5,1.5);
    \strand (2.5,.6) to [out = 180, in = 90]
    (1.75,0) to [out =270, in = 180]
    (2.5,-.6) to [out = 0, in = 270]
    (3.25,0) to [out = 90, in = 0]
    (2.5,.6);
    
    \end{knot}
    
    \draw (-.8,0) node {$u_1$};
    \draw (.3,0) node {$u_2$};
    \draw (.3,.8) node {$v_1$};
    \draw (.3,-.8) node {$v_2$};

\end{scope}

\end{scope}

\begin{scope}[yshift= - 12cm]
\fill (0,0) circle (.1cm);
\fill (1,.5) circle (.1cm);
\fill (1,-.5) circle (.1cm);
\fill (2,0) circle (.1cm);
\fill (3,0) circle (.1cm);

\draw (3,0) -- (2,0) -- (1,.5) -- (0,0) -- (1,-.5) -- (2,0);

\draw (0,0) to [out = 180, in = 270]
(-.5,.5) to [out = 90, in = 180]
(1.5,1.5) to [out = 0, in = 90]
(3.5,.5) to [out = 270, in = 0]
(3,0);

\draw (-.3,-.3) node{$u_2$};
\draw (3.3,-.3) node{$u_1$};

\begin{scope}[xshift = 5 cm]

\fill (0,0) circle (.1cm);
\fill (1,.5) circle (.1cm);
\fill (1,-.5) circle (.1cm);
\fill (2,0) circle (.1cm);
\fill (3,0) circle (.1cm);

\draw (3,0) -- (2,0) -- (1,.5) -- (0,0) -- (1,-.5) -- (2,0);
\draw (2,0) to [out = 60, in = 120] (3,0);
\draw (2,0) to [out = -60, in = -120] (3,0);

\draw (0,0) to [out = 180, in = 270]
(-.5,.5) to [out = 90, in = 180]
(1.5,1.5) to [out = 0, in = 90]
(3.5,.5) to [out = 270, in = 0]
(3,0);

\draw[blue] (1.5,1) to [out = 270, in = 45] (1.5,.25) to [out = -135, in = 45] (1,0);
\draw[blue] (1.5,1) to [out = 180, in =135] (0.5,.25) to [out = -45,in=135] (1,0);
\draw[blue] (1.5,-1) to [out = 90, in = -45] (1.5,-.25) to [out = 135, in =-45] (1,0);
\draw[blue] (1.5,-1) to [out = 180, in =-135] (0.5,-.25) to [out = 45,in = -135] (1,0);
\draw[blue] (1.5,1) to [out = 0, in = 90] (2.5,0) to [out = 270, in = 0] (1.5,-1);

\draw[blue] (1.5,1) to [out = 90, in = 90] (-.7,0) to [out = 270, in = 270]  (1.5,-1);

\fill[white] (1.5,1) circle (.1cm);
\draw (1.5,1) circle (.1cm);

\fill[white] (1.5,-1) circle (.1cm);
\draw (1.5,-1) circle (.1cm);

\fill[white] (1,0) circle (.1cm);
\draw (1,0) circle (.1cm);
\fill[white] (2.5,0) circle (.1cm);
\draw (2.5,0) circle (.1cm);

\draw (-.3,-.3) node{$u_2$};
\draw (3.3,-.3) node{$u_1$};
\draw (1.8,1.2) node{$v_1$};
\draw (1.8,-1.2) node{$v_2$};

\end{scope}

\begin{scope}[xshift = 10.5 cm]

    \begin{knot}[
    consider self intersections,
    clip width = 4,
    ignore endpoint intersections = true,
    end tolerance = 2pt
    ]
    \flipcrossings{3, 5, 7}
    \strand (1.5,1.5) to [out = 180, in = 90]
    (-.5,.5) to [out = 270, in = 180]
    (.5,-.5) to [out = 0, in = 180]
    (1.5,-.25) to [out = 0, in = 180]
    (2.5,-.6) to [out = 0, in = 270]
    (3.5,0) to [out = 90, in = 0]
    (2.5,.6) to [out = 180, in = 0]
    (1.5,.25) to [out = 180, in = 0]
    (.5,.5) to [out = 180, in = 90]
    (-.5,-.5) to [out = 270, in = 180]
    (1.5,-1.5) to [out = 0, in = 270]
    (3.25, -.5) to [out = 90, in = 270]
    (3,0) to [out = 90, in =270]
    (3.25,.5) to [out = 90, in = 0]
    (1.5,1.5);
    \strand (1.5,.6) to [out = 180, in = 90]
    (.75,0) to [out =270, in = 180]
    (1.5,-.6) to [out = 0, in = 270]
    (2.25,0) to [out = 90, in = 0]
    (1.5,.6);
    
    \end{knot}
    
    \draw (-.8,0) node {$u_1$};
    \draw (.3,0) node {$u_2$};
    \draw (.3,.8) node {$v_1$};
    \draw (.3,-.8) node {$v_2$};

\end{scope}

\end{scope}

\end{tikzpicture}\]
\caption{The possibilities for $G'$, $G$ and $\overline{G}$, and $D$ under the assumptions of Lemma \ref{lemma:base}.}
\label{figure:BaseCase}
\end{figure}
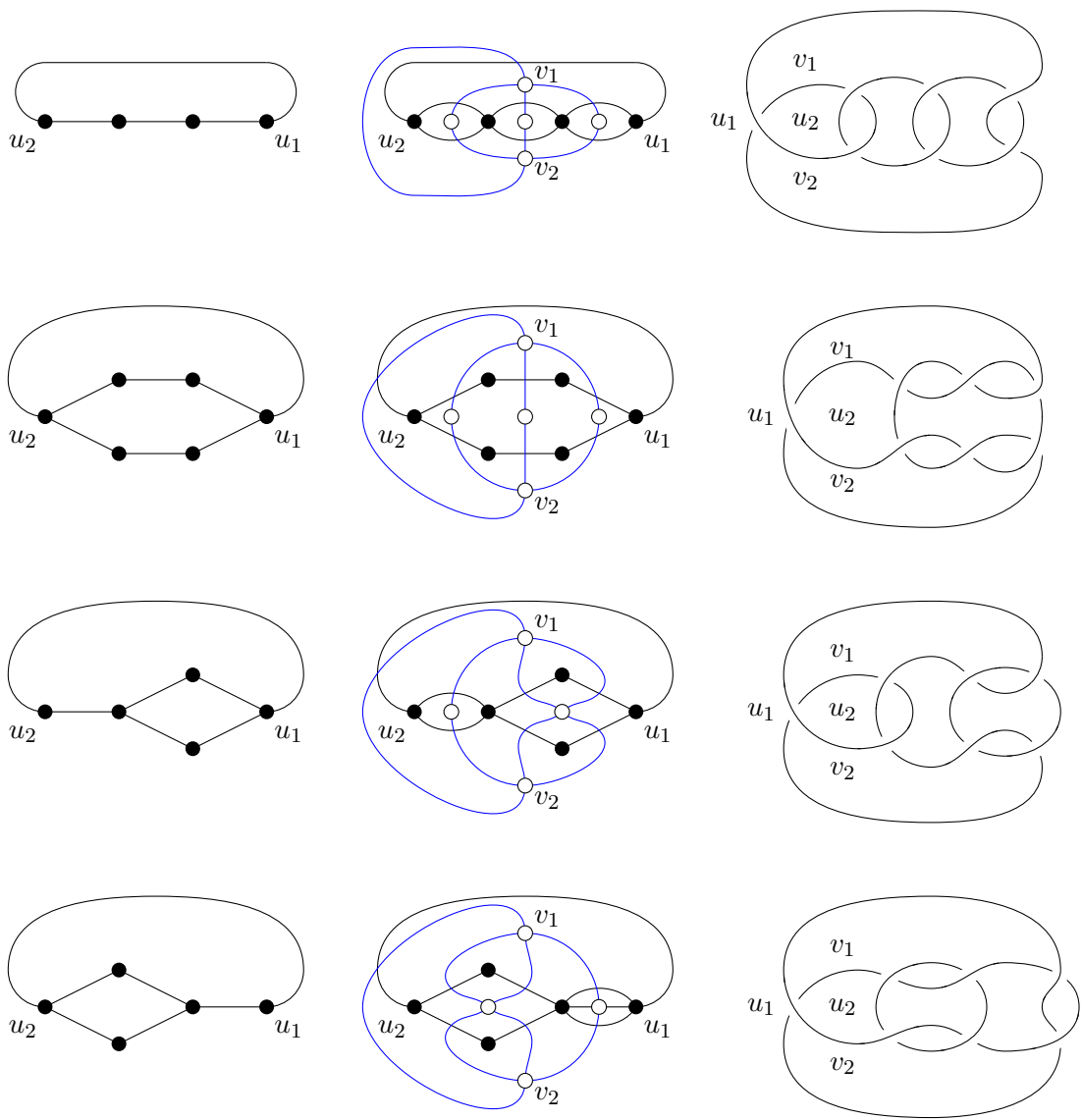

Suppose that $G'$ is the first graph in the first row of  Figure \ref{figure:BaseCase}. In order for every edge of $\overline{G}$ associated with a crossing in $R$ to be on a path of length two between $v_1$ and $v_2$, every edge on the path of length three from $u_1$ to $u_2$ in $G'$ has multiplicity two in $G$. The resulting graphs $G$ and $\overline{G}$ are in the second column of Figure \ref{figure:BaseCase} and the associated link diagram $D_1$ is in the third column of Figure \ref{figure:BaseCase}. The diagram $D_1$ is a diagram of the mirror of the link $L_{6n1}$ in Thisltethwaite's table. Its unshifted Khovanov homology is given in Table \ref{table:L6n1} and $\ukh^{3,4-s_A(D_1)}(D_1)$ is trivial.

\begin{table}
\begin{tabular}{| r || c | c | c | c | c |}
\hline
$j \backslash i$& 1 & 2 & 3 & 4 & 5\\
\hline
\hline
9 & & & & & 2\\
\hline
7 & & & & 1 & 3\\
\hline
5 & & &  & $1_2$ & 1\\
\hline
3 & & & 1 & & \\
\hline
1 & 1 & &\cellcolor{red} & &\\
\hline
-1 & 1 & & & &\\
\hline
\end{tabular}
\caption{The shifted Khovanov homology $\ukh(D_1)=\ukh(L_{6n1})$. An entry of $k$ indicates a $\mathbb{Z}^k$ summand, and an entry of $k_2$ indicates a $\mathbb{Z}_2^k$ summand.}
\label{table:L6n1}
\end{table}

Suppose that $G'$ is the first graph in the second row of Figure \ref{figure:BaseCase}. In order for every edge of $\overline{G}$ associated with a crossing in $R$ to be on a path of length two between $v_1$ and $v_2$, all of the edges in $G'$ have multiplicity one in $G$. The resulting graphs $G$ and $\overline{G}$ are in the second column of Figure \ref{figure:BaseCase}, and the associated link diagram $D_2$ is in the third column of Figure \ref{figure:BaseCase}. The diagram $D_2$ is a diagram of the right-handed trefoil, an alternating knot. Since $D_2$ is an $A$-almost alternating diagram of an alternating knot, its unshifted Khovanov homology is supported in bigradings $(i,j)$ where $2i-j = s_A(D) -2$ or $s_A(D)$. Therefore $\ukh^{3,4-s_A(D_2)}(D_2)$ is trivial.

Suppose that $G'$ is the first graph in the third row of Figure \ref{figure:BaseCase}. In order for every edge of $\overline{G}$ associated with a crossing in $R$ to be on a path of length two between $v_1$ and $v_2$, only the edge incident to $u_2$ on a path of length three between $u_1$ and $u_2$ has multiplicity two in $G$. All other edges have multiplicity one in $G$. The resulting graphs $G$ and $\overline{G}$ are in the second column of Figure \ref{figure:BaseCase} and the associated link diagram $D_3$ is in the third column of Figure \ref{figure:BaseCase}. The diagram $D_3$ is an $A$-almost alternating diagram of the $(2,4)$ torus link, an alternating link. As noted previously, this implies that $\ukh^{3,4-s_A(D_2)}(D_2)$ is trivial. The case where $G'$ is the first graph in fourth row is similar to this case and also results in an $A$-almost alternating diagram of the $(2,4)$-torus link.
\end{proof}

Lemma \ref{lemma:path2} shows that one of the three terms in the long exact sequence needed to compute $\ukh^{3,4-s_A(D)}(D)$ is trivial. See Figure \ref{figure:proofpath2} for an example of the enhanced states described in the proof of Lemma \ref{lemma:path2}.

\begin{lemma}
\label{lemma:path2}
Let $D$ be an almost alternating diagram where the only crossing in the boundary of $u_1$ and $u_2$ is the dealternator and where there is exactly one face in the alternating tangle $R$ that shares crossing(s) in its boundary with both $u_1$ and $u_2$. Then $\ukh^{2,2-s_A(D)}(D)$ is trivial.
\end{lemma}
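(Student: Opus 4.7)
The plan is to enumerate the enhanced states spanning $CKh^{2, 2-s_A(D)}(D)$, use hypotheses (i) and (ii) to pin down their structure, and then show via the Khovanov differential from $CKh^{1, 2-s_A(D)}(D)$ that every cycle is a boundary. Any enhanced state $S$ with $b(S) = 2$ and $j(S) = 2 - s_A(D)$ satisfies $\theta(S) = -s_A(D)$. Because two changes from an $A$- to a $B$-resolution alter the number of components of the Kauffman state by at most $2$ and because $|S| \geq -\theta(S) = s_A(D)$, one gets $|S| \in \{s_A(D), s_A(D)+2\}$. In the former case every label is minus and the pair of $B$-resolved crossings must consist of one split and one merge of the all-$A$ state; in the latter case exactly one label is plus and both $B$-resolved crossings must split.

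Using hypothesis (i) together with the alternation of $R$, I would argue that the tangle $R$ behaves as a reduced alternating subdiagram, so that $B$-resolving any crossing in $R$ from the all-$A$ state is a merge. The only possible split is therefore at the dealternator. Hypothesis (ii), singling out a unique face $u^\ast$ in $R$ bordering both $u_1$ and $u_2$, supplies the topological path through the checkerboard graph that places the two $A$-arcs at the dealternator in the same component of the all-$A$ state; so $B$-resolving the dealternator is indeed a split. Consequently, the case $|S| = s_A(D)+2$ is empty, and the states contributing to $CKh^{2, 2-s_A(D)}(D)$ are precisely the all-minus enhanced states obtained by $B$-resolving the dealternator together with exactly one other crossing $c \in R$.

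To finish, I would compute $d^{1, 2-s_A(D)}: CKh^{1, 2-s_A(D)}(D) \to CKh^{2, 2-s_A(D)}(D)$ on the chains whose single $B$-resolution is the dealternator. Such a chain has $|S'| = s_A(D)+1$ components with one plus label; applying the differential via any crossing $c \in R$ merges a plus and a minus by the Frobenius rule $1 \otimes X \to X$, producing an all-minus state in the target bigrading. A sign check using $\langle S : S' \rangle$ and the incidence conventions in Figure \ref{figure:mergesplit} should then show that the image of $d^{1, 2-s_A(D)}$ fills $\ker d^{2, 2-s_A(D)}$, giving $\ukh^{2, 2-s_A(D)}(D) = 0$. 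The main obstacle is the topological claim that the two $A$-arcs at the dealternator share a component of the all-$A$ state, which requires a careful trace through the checkerboard graph around $u^\ast$ and uses hypothesis (i) to rule out any other direct route between $u_1$ and $u_2$.
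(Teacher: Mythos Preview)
Your enumeration of $CKh^{2,2-s_A(D)}(D)$ is essentially right (in the simple case), but the surjectivity argument has a real gap. You propose to compute $d^{1,2-s_A(D)}$ only on chains whose single $B$-resolution is the dealternator; those chains are indexed by the $s_A(D)+1$ components of that state, and their images under $d$ are sums $\sum_{\ell\text{ incident to }w}\pm S_\ell$. The span of these vectors has rank at most $s_A(D)+1$, whereas the target has rank $c-1$, so for generic $D$ this cannot fill $CKh^{2,2-s_A(D)}(D)$ or even its kernel under $d^2$. The paper's missing ingredient is a second family of preimages: for each crossing $k$ \emph{not} on the length-two path through $u^\ast$, take $T_k$ to be the enhanced state with crossing $k$ (not the dealternator) $B$-resolved and \emph{all} labels $-$. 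Because merging two $-$ components contributes zero and the only split from this state is at the dealternator, one gets $d(T_k)=\pm S_k$ on the nose. These $T_k$ kill $S_3,\dots,S_{c-1}$ directly; your dealternator-type preimages then handle $S_1,S_2$ modulo the rest. Hypothesis (ii) enters precisely here, to guarantee that crossings $1$ and $2$ are the only ones where this shortcut fails, not (as you suggest) to make the dealternator a split---that follows already from $u_1\neq u_2$.

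There is a second, smaller gap: your enumeration of $CKh^{2,2-s_A(D)}(D)$ assumes that $B$-resolving a second crossing in $R$ (after the first) is always a merge. When the white checkerboard graph $G$ has a multi-edge, two parallel crossings in $R$ give a merge followed by a split, producing an all-minus state with $s_A(D)$ components and \emph{no} $B$-resolution at the dealternator. The paper handles this by first proving the simple-$G$ case and then inducting on the number of extra parallel edges via the twist lemma.
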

\begin{proof}
Let $G$ be the checkerboard graph of $D$ containing $u_1$ and $u_2$. First, suppose that $G$ is a simple graph, i.e. it contains no multiple edges. Let $S_A^-$ be the enhanced state where every crossing is an $A$-resolution and each component is labeled with a $-$. Then $i(S_A^-) =0$ and $j(s_A^-)=-s_A(D)$. Suppose that $S$ is an enhanced state such that $i(S)=2$ and $j(S) = 2-s_A(D)$. Since $i(S)=2$, it follows that $S$ has exactly two $B$-resolutions. The states of $D$ with exactly two $B$-resolutions and the most number of components are those states where the dealternator and one other crossing are $B$-resolutions. Such states have $s_A(D)$ components. Because $j(S)=2-s_A(D)$, it follows that $S$ is an enhanced state where the dealternator and exactly one other crossing are $B$-resolutions and every component of $S$ is labeled with a $-$.

Our strategy is to show that the Khovanov boundary map $d^{1,2-s_A(D)}:CKh^{1,2-s_A(D)}(D)\to CKh^{2,2-s_A(D)}(D)$ is onto, and therefore the homology $\ukh^{2,2-s_A(D)}(D)$ is trivial.  Number the crossings of $D$ (and thus the edges of $G$) so that the dealternator is assigned $c=c(D)$, the two edges on the path of length two between $u_1$ and $u_2$ are assigned 1 and 2, and the remaining crossings are assigned 3 through $c-1$. For $1\leq k \leq c-1$, define $S_k$ to be the enhanced state with $B$-resolutions at crossings $k$ and $c$, $A$-resolutions at all other crossings, and where every component is labeled with a $-$. The enhanced states $\{S_k~|~k=1,\dots,c-1\}$ form a basis of $CKh^{2,2-s_A(D)}(D)$.

For $k=1$ or $2$, define $T_k$ to be the enhanced state where the dealternator is assigned a $B$-resolution, all other crossing are assigned $A$-resolutions, the component incident to the dealternator and crossing $k$ is labeled with a $+$, and all other components are labeled with a $-$. For $3\leq k \leq c-1$, define $T_k$ to be the enhanced state where crossing $k$ is assigned a $B$-resolution, all other crossings are assigned $A$-resolutions, and every component is labeled with a $-$. For all $1\leq k \leq c-1$, the enhanced state $T_k$ satisfies $i(T_k)=1$ and $j(T_k)=2-s_A(D)$. 

When $3\leq k \leq c-1$, the only crossing where switching from an $A$-resolution to a $B$-resolution splits one component into two is the dealternator. All other switches from an $A$-resolution to a $B$-resolution merge two components into one. Since every component of $T_k$ for $3\leq k \leq c-1$ is labeled with a $-$, it follows that $d(T_k) = - S_k$ for all $3\leq k \leq c-1$. When $k=1$ or $2$, each crossing change from an $A$-resolution to a $B$-resolution merges two components into one. Since there is only one component labeled with a $+$, it follows that an enhanced state $S_\ell$ will appear with nonzero coefficient in the sum $d(T_k)$ if and only if the component labeled with a $+$ in $T_k$ is incident to crossing $\ell$. Therefore
\[ d(T_1) = S_1 + \sum_{k=3}^{c-1} a_k S_k~\text{and}~d(T_2) = S_2 + \sum_{k=3}^{c-1}b_kS_k\]
where each $a_k$ and $b_k$ is either $0$, $1$, or $-1$, and also, $S_k=d(-T_k)$ for $3\leq k \leq c-1$. Therefore $S_1= d(T_1) + \sum_{k=3}^{c-1}a_k d(T_k)$ and $S_2=d(T_2)+\sum_{k=3}^{c-1}b_k d(T_k)$. Since each $S_k$ is in the image of the Khovanov differential $d$ for $1\leq k \leq c-1$, it follows that the map $d^{1,2-s_A(D)}:CKh^{1,2-s_A(D)}(D)\to CKh^{2,2-s_A(D)}(D)$ is onto, and hence $\ukh^{2,2-s_A(D)}(D)$ is trivial.

Now suppose that $G$ potentially has multiple edges. Let $G'$ be the simplification of $G$, that is $G'$ has the same vertex set as $G$ but multiple edges in $G$ are replaced with a single edge in $G'$. Let $m$ be the difference in the number of edges between $G$ and $G'$, and proceed by induction on $m$. If $m=0$, then $G$ is simple and that case is proved above.

By way of induction, assume that if the difference in the number of edges between $G$ and $G'$ is less than $m$, then $\ukh^{2,2-s_A(D)}(D)$ is trivial. Choose an edge of $G$ that is a part of a set of $n$ multiple edges for some $n>1$. Call this link diagram $D_n$. Let $D$ be the link diagram obtained from $G$ by replacing the $n$ multiple edges with a single edge. Then $s_A(D)=s_A(D_n)$, and by the inductive hypothesis, $\ukh^{2,2-s_A(D)}(D)$ is trivial. Because $s_A(D_B) = s_A(D)-1$, it follows that $2-s_A(D) - 2k+1 = 4-s_A(D_B)-2k$. Thus if $k>1$, then $2-s_A(D)-2k+1\leq -s_A(D_B)$ and hence $\ukh^{2-k,2-s_A(D)-2k+1}(D_B)$ is trivial. Lemma \ref{lemma:twist} implies that $\ukh^{2,2-s_A(D_n)}(D_n)$ is trivial, as desired.
\end{proof}

\begin{example}
\label{example:14}
Let $D$ be the diagram of the unknot in Figure \ref{figure:proofpath2}. Then $D$ satisfies the assumptions of Lemma \ref{lemma:path2}, and the enhanced states $S_i$ and $T_j$ for $1\leq i,j\leq 5$ are also depicted in Figure \ref{figure:proofpath2}. The differential applied to $T_j$ yields $d(T_1) = S_1+S_3$, $d(T_2)=S_2+S_5$, and $d(T_j)=-S_j$ for $j=1,2,$ and $3$. 
\end{example}

\begin{figure}[ht]
\[\begin{tikzpicture}
    \begin{knot}[
    consider self intersections,
    clip width = 4,
    ignore endpoint intersections = true,
    end tolerance = .25pt
    ]
    \flipcrossings{3, 4, 6}
    \strand (1.5,1.5) to [out = 180, in = 90]
    (-.5,.5) to [out = 270, in = 180]
    (.75,-.75) to [out = 0, in = 180]
    (2,-.25) to [out = 0, in = 90]
    (3.25,-1) to [out = 270, in = 0]
    (1.5,-1.5) to [out = 180, in = 270]
    (-.5,-.5) to [out = 90, in = 180]
    (.6,.75) to [out = 0, in = 180]
    (1.5,.25) to [out = 0, in = 180]
    (2.5,.75) to [out = 0, in = 90]
    (3,0) to [out = 270, in = 0]
    (2.25,-.75) to [out= 180, in = 270]
    (0.75,0) to [out = 90, in = 180]
    (1.5,.75) to [out = 0, in = 180]
    (2.5,.25) to [out = 0, in = 270]
    (3.25,.75) to [out = 90, in = 0]
    (1.5,1.5);
    \end{knot}
    
\draw (-.7,0) node{6};
\draw (1,1) node{3};
\draw (1.2,-1) node{1};
\draw (2.9,-1) node{2};
\draw (2,1) node {4};
\draw (2.9,1) node{5};

\begin{scope}[scale = .4, xshift = 9cm, yshift = -2.7cm]

\draw (2,3) to [out = 45, in = 180] (3,4.5) to [out = 0, in = 135] (4,4);
\draw (2,2) to [out = -45, in = 180] (3,.5) to [out = 0, in = -135] (4,1);
\draw (4,2) to [out = 135, in = 225] (4,3);
\draw (5,4) to [out = 45, in = 135] (6,4);
\draw (5,3) to [out = -45, in = 225] (6,3);
\draw (7,4) to [out = 45, in = 135] (8,4);
\draw (7,3) to [out = -45, in = 225] (8,3);
\draw (9,3) to [out = -45, in = 45] (9,2);
\draw (8,2) to [out = 135, in = 45, looseness=.5] (5,2);
\draw (8,1) to [out = 225, in = -45, looseness=.5](5,1);
\draw (1,2) to [out = 225, in = 90] (.5,1);
\draw (.5,1) arc (180:270:1);
\draw (1.5,0) -- (9,0);
\draw (9,0) arc (-90:0:.5);
\draw (9.5,.5) to [out = 90, in = -45] (9,1);
\draw (1,3) to [out = 135, in = 270] (.5,4);
\draw (.5,4) arc (180:90:1);
\draw (1.5,5) -- (9,5);
\draw (9,5) arc (90:0:.5);
\draw (9.5,4.5) to [out = 270, in = 45] (9,4);

\begin{scope}[xshift =1cm, yshift = 2cm]
\draw[very thick, blue] (.3,.5) -- (.7,.5);
\draw (0,0) to [out = 45, in = 270] (.3,.5) to [out = 90, in = -45] (0,1);
\draw (1,0) to [out = 135, in = 270] (.7,.5) to [out = 90, in = 225] (1,1);
\end{scope}

\begin{scope}[xshift =4cm, yshift = 1cm]
\draw[very thick, blue] (.5,.3) -- (.5,.7);
\draw (0,0) to [out = 45, in = 180] (.5,.3) to [out = 0, in = 135] (1,0);
\draw (0,1) to [out = -45, in = 180] (.5,.7) to [out = 0, in = 225] (1,1);
\end{scope}

\begin{scope}[xshift =6cm, yshift = 3cm]
\draw[very thick, red] (.3,.5) -- (.7,.5);
\draw (0,0) to [out = 45, in = 270] (.3,.5) to [out = 90, in = -45] (0,1);
\draw (1,0) to [out = 135, in = 270] (.7,.5) to [out = 90, in = 225] (1,1);
\end{scope}

\begin{scope}[xshift =8cm, yshift = 3cm]
\draw[very thick, red] (.3,.5) -- (.7,.5);
\draw (0,0) to [out = 45, in = 270] (.3,.5) to [out = 90, in = -45] (0,1);
\draw (1,0) to [out = 135, in = 270] (.7,.5) to [out = 90, in = 225] (1,1);
\end{scope}

\begin{scope}[xshift =8cm, yshift = 1cm]
\draw[very thick, red] (.3,.5) -- (.7,.5);
\draw (0,0) to [out = 45, in = 270] (.3,.5) to [out = 90, in = -45] (0,1);
\draw (1,0) to [out = 135, in = 270] (.7,.5) to [out = 90, in = 225] (1,1);
\end{scope}

\begin{scope}[xshift =4cm, yshift = 3cm]
\draw[very thick, red] (.3,.5) -- (.7,.5);
\draw (0,0) to [out = 45, in = 270] (.3,.5) to [out = 90, in = -45] (0,1);
\draw (1,0) to [out = 135, in = 270] (.7,.5) to [out = 90, in = 225] (1,1);
\end{scope}

\draw (1,4) node{$-$};
\draw (3,4) node{$-$};
\draw (5.5,3.5) node{$-$};
\draw (7.5,3.5) node{$-$};

\draw (5,-1) node {$S_1$};

\end{scope}

\begin{scope}[scale = .4, xshift = 19cm, yshift = -2.7cm]

\draw (2,3) to [out = 45, in = 180] (3,4.5) to [out = 0, in = 135] (4,4);
\draw (2,2) to [out = -45, in = 180] (3,.5) to [out = 0, in = -135] (4,1);
\draw (4,2) to [out = 135, in = 225] (4,3);
\draw (5,4) to [out = 45, in = 135] (6,4);
\draw (5,3) to [out = -45, in = 225] (6,3);
\draw (7,4) to [out = 45, in = 135] (8,4);
\draw (7,3) to [out = -45, in = 225] (8,3);
\draw (9,3) to [out = -45, in = 45] (9,2);
\draw (8,2) to [out = 135, in = 45, looseness=.5] (5,2);
\draw (8,1) to [out = 225, in = -45, looseness=.5](5,1);
\draw (1,2) to [out = 225, in = 90] (.5,1);
\draw (.5,1) arc (180:270:1);
\draw (1.5,0) -- (9,0);
\draw (9,0) arc (-90:0:.5);
\draw (9.5,.5) to [out = 90, in = -45] (9,1);
\draw (1,3) to [out = 135, in = 270] (.5,4);
\draw (.5,4) arc (180:90:1);
\draw (1.5,5) -- (9,5);
\draw (9,5) arc (90:0:.5);
\draw (9.5,4.5) to [out = 270, in = 45] (9,4);

\begin{scope}[xshift =1cm, yshift = 2cm]
\draw[very thick, blue] (.3,.5) -- (.7,.5);
\draw (0,0) to [out = 45, in = 270] (.3,.5) to [out = 90, in = -45] (0,1);
\draw (1,0) to [out = 135, in = 270] (.7,.5) to [out = 90, in = 225] (1,1);
\end{scope}

\begin{scope}[xshift =8cm, yshift = 1cm]
\draw[very thick, blue] (.5,.3) -- (.5,.7);
\draw (0,0) to [out = 45, in = 180] (.5,.3) to [out = 0, in = 135] (1,0);
\draw (0,1) to [out = -45, in = 180] (.5,.7) to [out = 0, in = 225] (1,1);
\end{scope}

\begin{scope}[xshift =6cm, yshift = 3cm]
\draw[very thick, red] (.3,.5) -- (.7,.5);
\draw (0,0) to [out = 45, in = 270] (.3,.5) to [out = 90, in = -45] (0,1);
\draw (1,0) to [out = 135, in = 270] (.7,.5) to [out = 90, in = 225] (1,1);
\end{scope}

\begin{scope}[xshift =8cm, yshift = 3cm]
\draw[very thick, red] (.3,.5) -- (.7,.5);
\draw (0,0) to [out = 45, in = 270] (.3,.5) to [out = 90, in = -45] (0,1);
\draw (1,0) to [out = 135, in = 270] (.7,.5) to [out = 90, in = 225] (1,1);
\end{scope}

\begin{scope}[xshift =4cm, yshift = 3cm]
\draw[very thick, red] (.3,.5) -- (.7,.5);
\draw (0,0) to [out = 45, in = 270] (.3,.5) to [out = 90, in = -45] (0,1);
\draw (1,0) to [out = 135, in = 270] (.7,.5) to [out = 90, in = 225] (1,1);
\end{scope}

\begin{scope}[xshift =4cm, yshift = 1cm]
\draw[very thick, red] (.3,.5) -- (.7,.5);
\draw (0,0) to [out = 45, in = 270] (.3,.5) to [out = 90, in = -45] (0,1);
\draw (1,0) to [out = 135, in = 270] (.7,.5) to [out = 90, in = 225] (1,1);
\end{scope}

\draw (1,4) node{$-$};
\draw (3,4) node{$-$};
\draw (5.5,3.5) node{$-$};
\draw (7.5,3.5) node{$-$};

\draw (5,-1) node {$S_2$};

\end{scope}

\begin{scope}[scale = .4, xshift = 29cm, yshift = -2.7cm]

\draw (2,3) to [out = 45, in = 180] (3,4.5) to [out = 0, in = 135] (4,4);
\draw (2,2) to [out = -45, in = 180] (3,.5) to [out = 0, in = -135] (4,1);
\draw (4,2) to [out = 135, in = 225] (4,3);
\draw (5,4) to [out = 45, in = 135] (6,4);
\draw (5,3) to [out = -45, in = 225] (6,3);
\draw (7,4) to [out = 45, in = 135] (8,4);
\draw (7,3) to [out = -45, in = 225] (8,3);
\draw (9,3) to [out = -45, in = 45] (9,2);
\draw (8,2) to [out = 135, in = 45, looseness=.5] (5,2);
\draw (8,1) to [out = 225, in = -45, looseness=.5](5,1);
\draw (1,2) to [out = 225, in = 90] (.5,1);
\draw (.5,1) arc (180:270:1);
\draw (1.5,0) -- (9,0);
\draw (9,0) arc (-90:0:.5);
\draw (9.5,.5) to [out = 90, in = -45] (9,1);
\draw (1,3) to [out = 135, in = 270] (.5,4);
\draw (.5,4) arc (180:90:1);
\draw (1.5,5) -- (9,5);
\draw (9,5) arc (90:0:.5);
\draw (9.5,4.5) to [out = 270, in = 45] (9,4);

\begin{scope}[xshift =1cm, yshift = 2cm]
\draw[very thick, blue] (.3,.5) -- (.7,.5);
\draw (0,0) to [out = 45, in = 270] (.3,.5) to [out = 90, in = -45] (0,1);
\draw (1,0) to [out = 135, in = 270] (.7,.5) to [out = 90, in = 225] (1,1);
\end{scope}

\begin{scope}[xshift =4cm, yshift = 3cm]
\draw[very thick, blue] (.5,.3) -- (.5,.7);
\draw (0,0) to [out = 45, in = 180] (.5,.3) to [out = 0, in = 135] (1,0);
\draw (0,1) to [out = -45, in = 180] (.5,.7) to [out = 0, in = 225] (1,1);
\end{scope}

\begin{scope}[xshift =6cm, yshift = 3cm]
\draw[very thick, red] (.3,.5) -- (.7,.5);
\draw (0,0) to [out = 45, in = 270] (.3,.5) to [out = 90, in = -45] (0,1);
\draw (1,0) to [out = 135, in = 270] (.7,.5) to [out = 90, in = 225] (1,1);
\end{scope}

\begin{scope}[xshift =8cm, yshift = 3cm]
\draw[very thick, red] (.3,.5) -- (.7,.5);
\draw (0,0) to [out = 45, in = 270] (.3,.5) to [out = 90, in = -45] (0,1);
\draw (1,0) to [out = 135, in = 270] (.7,.5) to [out = 90, in = 225] (1,1);
\end{scope}

\begin{scope}[xshift =8cm, yshift = 1cm]
\draw[very thick, red] (.3,.5) -- (.7,.5);
\draw (0,0) to [out = 45, in = 270] (.3,.5) to [out = 90, in = -45] (0,1);
\draw (1,0) to [out = 135, in = 270] (.7,.5) to [out = 90, in = 225] (1,1);
\end{scope}

\begin{scope}[xshift =4cm, yshift = 1cm]
\draw[very thick, red] (.3,.5) -- (.7,.5);
\draw (0,0) to [out = 45, in = 270] (.3,.5) to [out = 90, in = -45] (0,1);
\draw (1,0) to [out = 135, in = 270] (.7,.5) to [out = 90, in = 225] (1,1);
\end{scope}

\draw (1,4) node{$-$};
\draw (3,4) node{$-$};
\draw (6.5,1.5) node{$-$};
\draw (7.5,3.5) node{$-$};

\draw (5,-1) node {$S_3$};

\end{scope}

\begin{scope}[yshift = - 3.5cm]

\begin{scope}[scale = .4, xshift = -1cm, yshift = -2.7cm]

\draw (2,3) to [out = 45, in = 180] (3,4.5) to [out = 0, in = 135] (4,4);
\draw (2,2) to [out = -45, in = 180] (3,.5) to [out = 0, in = -135] (4,1);
\draw (4,2) to [out = 135, in = 225] (4,3);
\draw (5,4) to [out = 45, in = 135] (6,4);
\draw (5,3) to [out = -45, in = 225] (6,3);
\draw (7,4) to [out = 45, in = 135] (8,4);
\draw (7,3) to [out = -45, in = 225] (8,3);
\draw (9,3) to [out = -45, in = 45] (9,2);
\draw (8,2) to [out = 135, in = 45, looseness=.5] (5,2);
\draw (8,1) to [out = 225, in = -45, looseness=.5](5,1);
\draw (1,2) to [out = 225, in = 90] (.5,1);
\draw (.5,1) arc (180:270:1);
\draw (1.5,0) -- (9,0);
\draw (9,0) arc (-90:0:.5);
\draw (9.5,.5) to [out = 90, in = -45] (9,1);
\draw (1,3) to [out = 135, in = 270] (.5,4);
\draw (.5,4) arc (180:90:1);
\draw (1.5,5) -- (9,5);
\draw (9,5) arc (90:0:.5);
\draw (9.5,4.5) to [out = 270, in = 45] (9,4);

\begin{scope}[xshift =1cm, yshift = 2cm]
\draw[very thick, blue] (.3,.5) -- (.7,.5);
\draw (0,0) to [out = 45, in = 270] (.3,.5) to [out = 90, in = -45] (0,1);
\draw (1,0) to [out = 135, in = 270] (.7,.5) to [out = 90, in = 225] (1,1);
\end{scope}

\begin{scope}[xshift =6cm, yshift = 3cm]
\draw[very thick, blue] (.5,.3) -- (.5,.7);
\draw (0,0) to [out = 45, in = 180] (.5,.3) to [out = 0, in = 135] (1,0);
\draw (0,1) to [out = -45, in = 180] (.5,.7) to [out = 0, in = 225] (1,1);
\end{scope}

\begin{scope}[xshift =4cm, yshift = 3cm]
\draw[very thick, red] (.3,.5) -- (.7,.5);
\draw (0,0) to [out = 45, in = 270] (.3,.5) to [out = 90, in = -45] (0,1);
\draw (1,0) to [out = 135, in = 270] (.7,.5) to [out = 90, in = 225] (1,1);
\end{scope}

\begin{scope}[xshift =8cm, yshift = 3cm]
\draw[very thick, red] (.3,.5) -- (.7,.5);
\draw (0,0) to [out = 45, in = 270] (.3,.5) to [out = 90, in = -45] (0,1);
\draw (1,0) to [out = 135, in = 270] (.7,.5) to [out = 90, in = 225] (1,1);
\end{scope}

\begin{scope}[xshift =8cm, yshift = 1cm]
\draw[very thick, red] (.3,.5) -- (.7,.5);
\draw (0,0) to [out = 45, in = 270] (.3,.5) to [out = 90, in = -45] (0,1);
\draw (1,0) to [out = 135, in = 270] (.7,.5) to [out = 90, in = 225] (1,1);
\end{scope}

\begin{scope}[xshift =4cm, yshift = 1cm]
\draw[very thick, red] (.3,.5) -- (.7,.5);
\draw (0,0) to [out = 45, in = 270] (.3,.5) to [out = 90, in = -45] (0,1);
\draw (1,0) to [out = 135, in = 270] (.7,.5) to [out = 90, in = 225] (1,1);
\end{scope}

\draw (1,4) node{$-$};
\draw (3,4) node{$-$};
\draw (6.5,1.5) node{$-$};
\draw (7.5,3.5) node{$-$};

\draw (5,-1) node {$S_4$};

\end{scope}
    
\begin{scope}[scale = .4, xshift = 9cm, yshift = -2.7cm]

\draw (2,3) to [out = 45, in = 180] (3,4.5) to [out = 0, in = 135] (4,4);
\draw (2,2) to [out = -45, in = 180] (3,.5) to [out = 0, in = -135] (4,1);
\draw (4,2) to [out = 135, in = 225] (4,3);
\draw (5,4) to [out = 45, in = 135] (6,4);
\draw (5,3) to [out = -45, in = 225] (6,3);
\draw (7,4) to [out = 45, in = 135] (8,4);
\draw (7,3) to [out = -45, in = 225] (8,3);
\draw (9,3) to [out = -45, in = 45] (9,2);
\draw (8,2) to [out = 135, in = 45, looseness=.5] (5,2);
\draw (8,1) to [out = 225, in = -45, looseness=.5](5,1);
\draw (1,2) to [out = 225, in = 90] (.5,1);
\draw (.5,1) arc (180:270:1);
\draw (1.5,0) -- (9,0);
\draw (9,0) arc (-90:0:.5);
\draw (9.5,.5) to [out = 90, in = -45] (9,1);
\draw (1,3) to [out = 135, in = 270] (.5,4);
\draw (.5,4) arc (180:90:1);
\draw (1.5,5) -- (9,5);
\draw (9,5) arc (90:0:.5);
\draw (9.5,4.5) to [out = 270, in = 45] (9,4);

\begin{scope}[xshift =1cm, yshift = 2cm]
\draw[very thick, blue] (.3,.5) -- (.7,.5);
\draw (0,0) to [out = 45, in = 270] (.3,.5) to [out = 90, in = -45] (0,1);
\draw (1,0) to [out = 135, in = 270] (.7,.5) to [out = 90, in = 225] (1,1);
\end{scope}

\begin{scope}[xshift =8cm, yshift = 3cm]
\draw[very thick, blue] (.5,.3) -- (.5,.7);
\draw (0,0) to [out = 45, in = 180] (.5,.3) to [out = 0, in = 135] (1,0);
\draw (0,1) to [out = -45, in = 180] (.5,.7) to [out = 0, in = 225] (1,1);
\end{scope}

\begin{scope}[xshift =4cm, yshift = 3cm]
\draw[very thick, red] (.3,.5) -- (.7,.5);
\draw (0,0) to [out = 45, in = 270] (.3,.5) to [out = 90, in = -45] (0,1);
\draw (1,0) to [out = 135, in = 270] (.7,.5) to [out = 90, in = 225] (1,1);
\end{scope}

\begin{scope}[xshift =6cm, yshift = 3cm]
\draw[very thick, red] (.3,.5) -- (.7,.5);
\draw (0,0) to [out = 45, in = 270] (.3,.5) to [out = 90, in = -45] (0,1);
\draw (1,0) to [out = 135, in = 270] (.7,.5) to [out = 90, in = 225] (1,1);
\end{scope}

\begin{scope}[xshift =8cm, yshift = 1cm]
\draw[very thick, red] (.3,.5) -- (.7,.5);
\draw (0,0) to [out = 45, in = 270] (.3,.5) to [out = 90, in = -45] (0,1);
\draw (1,0) to [out = 135, in = 270] (.7,.5) to [out = 90, in = 225] (1,1);
\end{scope}

\begin{scope}[xshift =4cm, yshift = 1cm]
\draw[very thick, red] (.3,.5) -- (.7,.5);
\draw (0,0) to [out = 45, in = 270] (.3,.5) to [out = 90, in = -45] (0,1);
\draw (1,0) to [out = 135, in = 270] (.7,.5) to [out = 90, in = 225] (1,1);
\end{scope}

\draw (1,4) node{$-$};
\draw (3,4) node{$-$};
\draw (6.5,1.5) node{$-$};
\draw (5.5,3.5) node{$-$};

\draw (5,-1) node {$S_5$};

\end{scope}

\begin{scope}[scale = .4, xshift = 19cm, yshift = -2.7cm]

\draw (2,3) to [out = 45, in = 180] (3,4.5) to [out = 0, in = 135] (4,4);
\draw (2,2) to [out = -45, in = 180] (3,.5) to [out = 0, in = -135] (4,1);
\draw (4,2) to [out = 135, in = 225] (4,3);
\draw (5,4) to [out = 45, in = 135] (6,4);
\draw (5,3) to [out = -45, in = 225] (6,3);
\draw (7,4) to [out = 45, in = 135] (8,4);
\draw (7,3) to [out = -45, in = 225] (8,3);
\draw (9,3) to [out = -45, in = 45] (9,2);
\draw (8,2) to [out = 135, in = 45, looseness=.5] (5,2);
\draw (8,1) to [out = 225, in = -45, looseness=.5](5,1);
\draw (1,2) to [out = 225, in = 90] (.5,1);
\draw (.5,1) arc (180:270:1);
\draw (1.5,0) -- (9,0);
\draw (9,0) arc (-90:0:.5);
\draw (9.5,.5) to [out = 90, in = -45] (9,1);
\draw (1,3) to [out = 135, in = 270] (.5,4);
\draw (.5,4) arc (180:90:1);
\draw (1.5,5) -- (9,5);
\draw (9,5) arc (90:0:.5);
\draw (9.5,4.5) to [out = 270, in = 45] (9,4);

\begin{scope}[xshift =1cm, yshift = 2cm]
\draw[very thick, blue] (.3,.5) -- (.7,.5);
\draw (0,0) to [out = 45, in = 270] (.3,.5) to [out = 90, in = -45] (0,1);
\draw (1,0) to [out = 135, in = 270] (.7,.5) to [out = 90, in = 225] (1,1);
\end{scope}

\begin{scope}[xshift =6cm, yshift = 3cm]
\draw[very thick, red] (.3,.5) -- (.7,.5);
\draw (0,0) to [out = 45, in = 270] (.3,.5) to [out = 90, in = -45] (0,1);
\draw (1,0) to [out = 135, in = 270] (.7,.5) to [out = 90, in = 225] (1,1);
\end{scope}

\begin{scope}[xshift =4cm, yshift = 3cm]
\draw[very thick, red] (.3,.5) -- (.7,.5);
\draw (0,0) to [out = 45, in = 270] (.3,.5) to [out = 90, in = -45] (0,1);
\draw (1,0) to [out = 135, in = 270] (.7,.5) to [out = 90, in = 225] (1,1);
\end{scope}

\begin{scope}[xshift =8cm, yshift = 3cm]
\draw[very thick, red] (.3,.5) -- (.7,.5);
\draw (0,0) to [out = 45, in = 270] (.3,.5) to [out = 90, in = -45] (0,1);
\draw (1,0) to [out = 135, in = 270] (.7,.5) to [out = 90, in = 225] (1,1);
\end{scope}

\begin{scope}[xshift =8cm, yshift = 1cm]
\draw[very thick, red] (.3,.5) -- (.7,.5);
\draw (0,0) to [out = 45, in = 270] (.3,.5) to [out = 90, in = -45] (0,1);
\draw (1,0) to [out = 135, in = 270] (.7,.5) to [out = 90, in = 225] (1,1);
\end{scope}

\begin{scope}[xshift =4cm, yshift = 1cm]
\draw[very thick, red] (.3,.5) -- (.7,.5);
\draw (0,0) to [out = 45, in = 270] (.3,.5) to [out = 90, in = -45] (0,1);
\draw (1,0) to [out = 135, in = 270] (.7,.5) to [out = 90, in = 225] (1,1);
\end{scope}

\draw (1,4) node{$-$};
\draw (3,4) node{$+$};
\draw (5.5,3.5) node{$-$};
\draw (6.5,1.5) node{$-$};
\draw (7.5,3.5) node{$-$};

\draw (5,-1) node {$T_1$};

\end{scope}

\begin{scope}[scale = .4, xshift = 29cm, yshift = -2.7cm]

\draw (2,3) to [out = 45, in = 180] (3,4.5) to [out = 0, in = 135] (4,4);
\draw (2,2) to [out = -45, in = 180] (3,.5) to [out = 0, in = -135] (4,1);
\draw (4,2) to [out = 135, in = 225] (4,3);
\draw (5,4) to [out = 45, in = 135] (6,4);
\draw (5,3) to [out = -45, in = 225] (6,3);
\draw (7,4) to [out = 45, in = 135] (8,4);
\draw (7,3) to [out = -45, in = 225] (8,3);
\draw (9,3) to [out = -45, in = 45] (9,2);
\draw (8,2) to [out = 135, in = 45, looseness=.5] (5,2);
\draw (8,1) to [out = 225, in = -45, looseness=.5](5,1);
\draw (1,2) to [out = 225, in = 90] (.5,1);
\draw (.5,1) arc (180:270:1);
\draw (1.5,0) -- (9,0);
\draw (9,0) arc (-90:0:.5);
\draw (9.5,.5) to [out = 90, in = -45] (9,1);
\draw (1,3) to [out = 135, in = 270] (.5,4);
\draw (.5,4) arc (180:90:1);
\draw (1.5,5) -- (9,5);
\draw (9,5) arc (90:0:.5);
\draw (9.5,4.5) to [out = 270, in = 45] (9,4);

\begin{scope}[xshift =1cm, yshift = 2cm]
\draw[very thick, blue] (.3,.5) -- (.7,.5);
\draw (0,0) to [out = 45, in = 270] (.3,.5) to [out = 90, in = -45] (0,1);
\draw (1,0) to [out = 135, in = 270] (.7,.5) to [out = 90, in = 225] (1,1);
\end{scope}

\begin{scope}[xshift =6cm, yshift = 3cm]
\draw[very thick, red] (.3,.5) -- (.7,.5);
\draw (0,0) to [out = 45, in = 270] (.3,.5) to [out = 90, in = -45] (0,1);
\draw (1,0) to [out = 135, in = 270] (.7,.5) to [out = 90, in = 225] (1,1);
\end{scope}

\begin{scope}[xshift =4cm, yshift = 3cm]
\draw[very thick, red] (.3,.5) -- (.7,.5);
\draw (0,0) to [out = 45, in = 270] (.3,.5) to [out = 90, in = -45] (0,1);
\draw (1,0) to [out = 135, in = 270] (.7,.5) to [out = 90, in = 225] (1,1);
\end{scope}

\begin{scope}[xshift =8cm, yshift = 3cm]
\draw[very thick, red] (.3,.5) -- (.7,.5);
\draw (0,0) to [out = 45, in = 270] (.3,.5) to [out = 90, in = -45] (0,1);
\draw (1,0) to [out = 135, in = 270] (.7,.5) to [out = 90, in = 225] (1,1);
\end{scope}

\begin{scope}[xshift =8cm, yshift = 1cm]
\draw[very thick, red] (.3,.5) -- (.7,.5);
\draw (0,0) to [out = 45, in = 270] (.3,.5) to [out = 90, in = -45] (0,1);
\draw (1,0) to [out = 135, in = 270] (.7,.5) to [out = 90, in = 225] (1,1);
\end{scope}

\begin{scope}[xshift =4cm, yshift = 1cm]
\draw[very thick, red] (.3,.5) -- (.7,.5);
\draw (0,0) to [out = 45, in = 270] (.3,.5) to [out = 90, in = -45] (0,1);
\draw (1,0) to [out = 135, in = 270] (.7,.5) to [out = 90, in = 225] (1,1);
\end{scope}

\draw (1,4) node{$+$};
\draw (3,4) node{$-$};
\draw (5.5,3.5) node{$-$};
\draw (6.5,1.5) node{$-$};
\draw (7.5,3.5) node{$-$};

\draw (5,-1) node {$T_2$};

\end{scope}

\end{scope}

\begin{scope}[yshift = - 7cm]

\begin{scope}[scale = .4, xshift = -1cm, yshift = -2.7cm]

\draw (2,3) to [out = 45, in = 180] (3,4.5) to [out = 0, in = 135] (4,4);
\draw (2,2) to [out = -45, in = 180] (3,.5) to [out = 0, in = -135] (4,1);
\draw (4,2) to [out = 135, in = 225] (4,3);
\draw (5,4) to [out = 45, in = 135] (6,4);
\draw (5,3) to [out = -45, in = 225] (6,3);
\draw (7,4) to [out = 45, in = 135] (8,4);
\draw (7,3) to [out = -45, in = 225] (8,3);
\draw (9,3) to [out = -45, in = 45] (9,2);
\draw (8,2) to [out = 135, in = 45, looseness=.5] (5,2);
\draw (8,1) to [out = 225, in = -45, looseness=.5](5,1);
\draw (1,2) to [out = 225, in = 90] (.5,1);
\draw (.5,1) arc (180:270:1);
\draw (1.5,0) -- (9,0);
\draw (9,0) arc (-90:0:.5);
\draw (9.5,.5) to [out = 90, in = -45] (9,1);
\draw (1,3) to [out = 135, in = 270] (.5,4);
\draw (.5,4) arc (180:90:1);
\draw (1.5,5) -- (9,5);
\draw (9,5) arc (90:0:.5);
\draw (9.5,4.5) to [out = 270, in = 45] (9,4);

\begin{scope}[xshift =1cm, yshift = 2cm]
\draw[very thick, red] (.5,.3) -- (.5,.7);
\draw (0,0) to [out = 45, in = 180] (.5,.3) to [out = 0, in = 135] (1,0);
\draw (0,1) to [out = -45, in = 180] (.5,.7) to [out = 0, in = 225] (1,1);
\end{scope}

\begin{scope}[xshift =4cm, yshift = 3cm]
\draw[very thick, blue] (.5,.3) -- (.5,.7);
\draw (0,0) to [out = 45, in = 180] (.5,.3) to [out = 0, in = 135] (1,0);
\draw (0,1) to [out = -45, in = 180] (.5,.7) to [out = 0, in = 225] (1,1);
\end{scope}

\begin{scope}[xshift =6cm, yshift = 3cm]
\draw[very thick, red] (.3,.5) -- (.7,.5);
\draw (0,0) to [out = 45, in = 270] (.3,.5) to [out = 90, in = -45] (0,1);
\draw (1,0) to [out = 135, in = 270] (.7,.5) to [out = 90, in = 225] (1,1);
\end{scope}

\begin{scope}[xshift =8cm, yshift = 3cm]
\draw[very thick, red] (.3,.5) -- (.7,.5);
\draw (0,0) to [out = 45, in = 270] (.3,.5) to [out = 90, in = -45] (0,1);
\draw (1,0) to [out = 135, in = 270] (.7,.5) to [out = 90, in = 225] (1,1);
\end{scope}

\begin{scope}[xshift =8cm, yshift = 1cm]
\draw[very thick, red] (.3,.5) -- (.7,.5);
\draw (0,0) to [out = 45, in = 270] (.3,.5) to [out = 90, in = -45] (0,1);
\draw (1,0) to [out = 135, in = 270] (.7,.5) to [out = 90, in = 225] (1,1);
\end{scope}

\begin{scope}[xshift =4cm, yshift = 1cm]
\draw[very thick, red] (.3,.5) -- (.7,.5);
\draw (0,0) to [out = 45, in = 270] (.3,.5) to [out = 90, in = -45] (0,1);
\draw (1,0) to [out = 135, in = 270] (.7,.5) to [out = 90, in = 225] (1,1);
\end{scope}

\draw (5.5,3.5) node{$-$};
\draw (6.5,1.5) node{$-$};
\draw (7.5,3.5) node{$-$};

\draw (5,-1) node {$T_3$};

\end{scope}
    
\begin{scope}[scale = .4, xshift = 9cm, yshift = -2.7cm]

\draw (2,3) to [out = 45, in = 180] (3,4.5) to [out = 0, in = 135] (4,4);
\draw (2,2) to [out = -45, in = 180] (3,.5) to [out = 0, in = -135] (4,1);
\draw (4,2) to [out = 135, in = 225] (4,3);
\draw (5,4) to [out = 45, in = 135] (6,4);
\draw (5,3) to [out = -45, in = 225] (6,3);
\draw (7,4) to [out = 45, in = 135] (8,4);
\draw (7,3) to [out = -45, in = 225] (8,3);
\draw (9,3) to [out = -45, in = 45] (9,2);
\draw (8,2) to [out = 135, in = 45, looseness=.5] (5,2);
\draw (8,1) to [out = 225, in = -45, looseness=.5](5,1);
\draw (1,2) to [out = 225, in = 90] (.5,1);
\draw (.5,1) arc (180:270:1);
\draw (1.5,0) -- (9,0);
\draw (9,0) arc (-90:0:.5);
\draw (9.5,.5) to [out = 90, in = -45] (9,1);
\draw (1,3) to [out = 135, in = 270] (.5,4);
\draw (.5,4) arc (180:90:1);
\draw (1.5,5) -- (9,5);
\draw (9,5) arc (90:0:.5);
\draw (9.5,4.5) to [out = 270, in = 45] (9,4);

\begin{scope}[xshift =1cm, yshift = 2cm]
\draw[very thick, red] (.5,.3) -- (.5,.7);
\draw (0,0) to [out = 45, in = 180] (.5,.3) to [out = 0, in = 135] (1,0);
\draw (0,1) to [out = -45, in = 180] (.5,.7) to [out = 0, in = 225] (1,1);
\end{scope}

\begin{scope}[xshift =6cm, yshift = 3cm]
\draw[very thick, blue] (.5,.3) -- (.5,.7);
\draw (0,0) to [out = 45, in = 180] (.5,.3) to [out = 0, in = 135] (1,0);
\draw (0,1) to [out = -45, in = 180] (.5,.7) to [out = 0, in = 225] (1,1);
\end{scope}

\begin{scope}[xshift =4cm, yshift = 3cm]
\draw[very thick, red] (.3,.5) -- (.7,.5);
\draw (0,0) to [out = 45, in = 270] (.3,.5) to [out = 90, in = -45] (0,1);
\draw (1,0) to [out = 135, in = 270] (.7,.5) to [out = 90, in = 225] (1,1);
\end{scope}

\begin{scope}[xshift =8cm, yshift = 3cm]
\draw[very thick, red] (.3,.5) -- (.7,.5);
\draw (0,0) to [out = 45, in = 270] (.3,.5) to [out = 90, in = -45] (0,1);
\draw (1,0) to [out = 135, in = 270] (.7,.5) to [out = 90, in = 225] (1,1);
\end{scope}

\begin{scope}[xshift =8cm, yshift = 1cm]
\draw[very thick, red] (.3,.5) -- (.7,.5);
\draw (0,0) to [out = 45, in = 270] (.3,.5) to [out = 90, in = -45] (0,1);
\draw (1,0) to [out = 135, in = 270] (.7,.5) to [out = 90, in = 225] (1,1);
\end{scope}

\begin{scope}[xshift =4cm, yshift = 1cm]
\draw[very thick, red] (.3,.5) -- (.7,.5);
\draw (0,0) to [out = 45, in = 270] (.3,.5) to [out = 90, in = -45] (0,1);
\draw (1,0) to [out = 135, in = 270] (.7,.5) to [out = 90, in = 225] (1,1);
\end{scope}

\draw (3.5,3.5) node{$-$};
\draw (6.5,1.5) node{$-$};
\draw (7.5,3.5) node{$-$};

\draw (5,-1) node {$T_4$};

\end{scope}

\begin{scope}[scale = .4, xshift = 19cm, yshift = -2.7cm]

\draw (2,3) to [out = 45, in = 180] (3,4.5) to [out = 0, in = 135] (4,4);
\draw (2,2) to [out = -45, in = 180] (3,.5) to [out = 0, in = -135] (4,1);
\draw (4,2) to [out = 135, in = 225] (4,3);
\draw (5,4) to [out = 45, in = 135] (6,4);
\draw (5,3) to [out = -45, in = 225] (6,3);
\draw (7,4) to [out = 45, in = 135] (8,4);
\draw (7,3) to [out = -45, in = 225] (8,3);
\draw (9,3) to [out = -45, in = 45] (9,2);
\draw (8,2) to [out = 135, in = 45, looseness=.5] (5,2);
\draw (8,1) to [out = 225, in = -45, looseness=.5](5,1);
\draw (1,2) to [out = 225, in = 90] (.5,1);
\draw (.5,1) arc (180:270:1);
\draw (1.5,0) -- (9,0);
\draw (9,0) arc (-90:0:.5);
\draw (9.5,.5) to [out = 90, in = -45] (9,1);
\draw (1,3) to [out = 135, in = 270] (.5,4);
\draw (.5,4) arc (180:90:1);
\draw (1.5,5) -- (9,5);
\draw (9,5) arc (90:0:.5);
\draw (9.5,4.5) to [out = 270, in = 45] (9,4);

\begin{scope}[xshift =1cm, yshift = 2cm]
\draw[very thick, red] (.5,.3) -- (.5,.7);
\draw (0,0) to [out = 45, in = 180] (.5,.3) to [out = 0, in = 135] (1,0);
\draw (0,1) to [out = -45, in = 180] (.5,.7) to [out = 0, in = 225] (1,1);
\end{scope}

\begin{scope}[xshift =8cm, yshift = 3cm]
\draw[very thick, blue] (.5,.3) -- (.5,.7);
\draw (0,0) to [out = 45, in = 180] (.5,.3) to [out = 0, in = 135] (1,0);
\draw (0,1) to [out = -45, in = 180] (.5,.7) to [out = 0, in = 225] (1,1);
\end{scope}

\begin{scope}[xshift =6cm, yshift = 3cm]
\draw[very thick, red] (.3,.5) -- (.7,.5);
\draw (0,0) to [out = 45, in = 270] (.3,.5) to [out = 90, in = -45] (0,1);
\draw (1,0) to [out = 135, in = 270] (.7,.5) to [out = 90, in = 225] (1,1);
\end{scope}

\begin{scope}[xshift =4cm, yshift = 3cm]
\draw[very thick, red] (.3,.5) -- (.7,.5);
\draw (0,0) to [out = 45, in = 270] (.3,.5) to [out = 90, in = -45] (0,1);
\draw (1,0) to [out = 135, in = 270] (.7,.5) to [out = 90, in = 225] (1,1);
\end{scope}

\begin{scope}[xshift =8cm, yshift = 1cm]
\draw[very thick, red] (.3,.5) -- (.7,.5);
\draw (0,0) to [out = 45, in = 270] (.3,.5) to [out = 90, in = -45] (0,1);
\draw (1,0) to [out = 135, in = 270] (.7,.5) to [out = 90, in = 225] (1,1);
\end{scope}

\begin{scope}[xshift =4cm, yshift = 1cm]
\draw[very thick, red] (.3,.5) -- (.7,.5);
\draw (0,0) to [out = 45, in = 270] (.3,.5) to [out = 90, in = -45] (0,1);
\draw (1,0) to [out = 135, in = 270] (.7,.5) to [out = 90, in = 225] (1,1);
\end{scope}

\draw (5.5,3.5) node{$-$};
\draw (6.5,1.5) node{$-$};
\draw (3.5,3.5) node{$-$};

\draw (5,-1) node {$T_5$};

\end{scope}

\end{scope}

\end{tikzpicture}\]
\caption{A six-crossing unknot that satisfies the assumptions of Lemma \ref{lemma:path2} together with the enhanced states $S_i$ and $T_j$ for $1\leq i,j\leq 5$.}
\label{figure:proofpath2}
\end{figure}
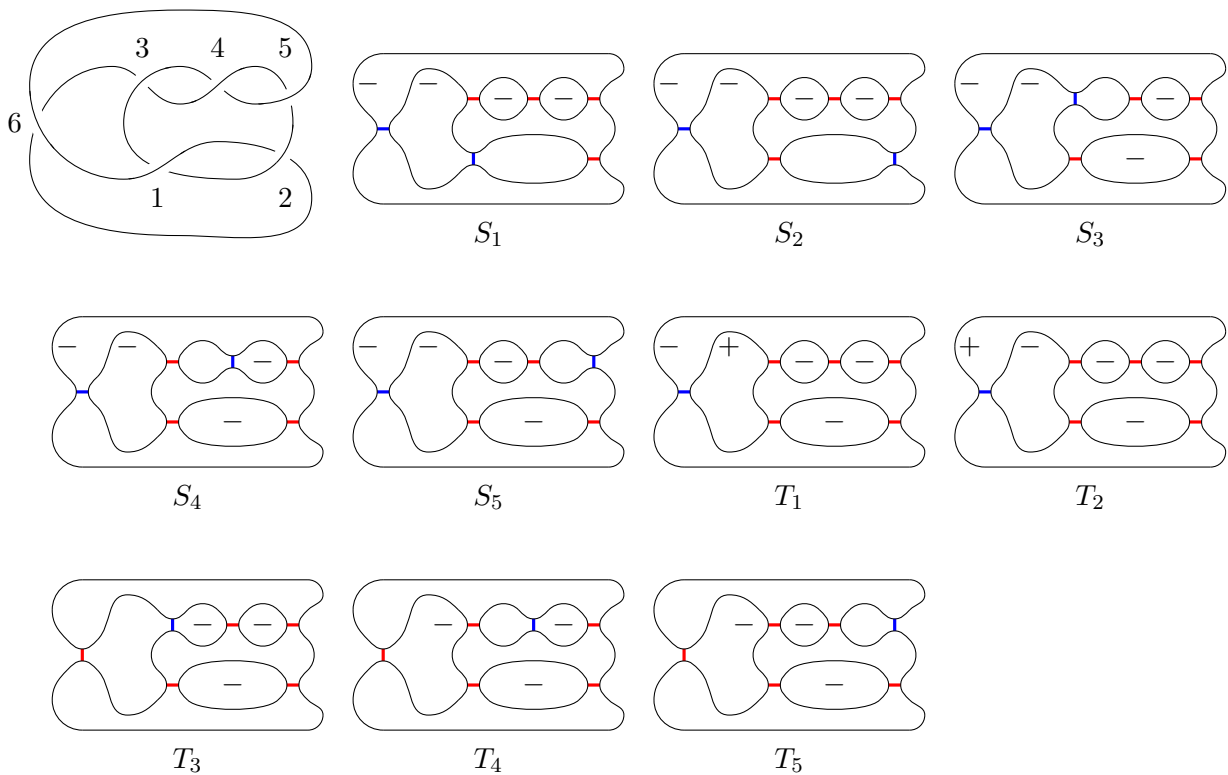

In the proof of Theorem \ref{theorem:almostalternating}, we first assume that the crossing where a resolution occurs is in a twist region of length one. Lemma \ref{lemma:twist} allows us to generalize to twist regions of arbitrary length.

\begin{figure}[ht]
\[\begin{tikzpicture}[scale = .7]
\begin{scope}[rounded corners = 2mm]
\draw (0,0) -- (1,1) -- (.7,1.3);
\draw (1,0) -- (.7,.3);
\draw (.3,.7) -- (0,1) -- (1,2);
\draw (.3,1.7) -- (0,2);
\fill (.5,2.3) circle (.03cm);
\fill (.5,2.5) circle (.03cm);
\fill (.5,2.7) circle (.03cm);
\draw (0,3) -- (1,4);
\draw (1,3) -- (.7,3.3);
\draw (.3,3.7) -- (0,4);

\draw (6,0) -- (6.3,.5) -- (6,1);
\draw (7,0) -- (6.7,.5) -- (7,1);

\draw (9,0) -- (9.5,.3) -- (10,0);
\draw (9,1) -- (9.5,.7) -- (10,1);

\draw (6.5,-.5) node{$D_A$};
\draw (9.5,-.5) node{$D_B$};

\end{scope}

\draw [decorate,
    decoration = {brace,
        raise=5pt,
        amplitude=5pt}] (-.5,0) --  (-.5,4);
        
\draw (.5,-.5) node{$D_n$};
\draw (-1.4,2) node{$n$};

\draw (3,0) -- (4,1);
\draw (4,0) -- (3.7,.3);
\draw (3,1) -- (3.3,.7);
\draw (3.5,-.5) node{$D$};

\end{tikzpicture}\]
\caption{The diagrams $D_n$, $D$, $D_A$ and $D_B$ in Lemma \ref{lemma:twist}.}
\label{figure:twist}
\end{figure}
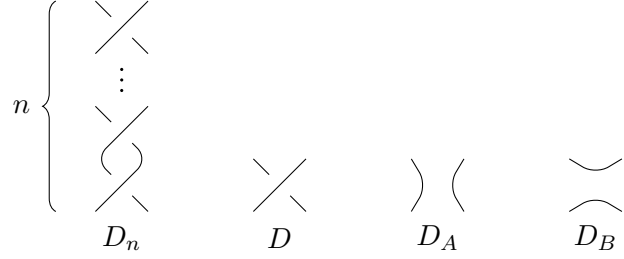

\begin{lemma}
\label{lemma:twist}
Let $D$, $D_A$, $D_B$, and $D_n$ be as in Figure \ref{figure:twist}. If $\ukh^{i,j}(D)$ and $\ukh^{i-k,j-2k+1}(D_B)$ are trivial for all $k> 1$, then $\ukh^{i,j}(D_n)$ is trivial for all $n\geq 1$.
\end{lemma}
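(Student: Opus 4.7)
The plan is to induct on $n$, with the base case $n = 1$ being immediate since $D_1 = D$ and $\ukh^{i,j}(D) = 0$ by hypothesis.

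For the inductive step with $n \geq 2$, I would apply Khovanov's long exact sequence (Theorem \ref{theorem:LES}) to $D_n$ by resolving the topmost crossing of the twist region. A direct diagrammatic analysis identifies the two resolutions: the $A$-smoothing at this crossing simply passes straight through and shortens the twist region by one, giving $(D_n)_A = D_{n-1}$; whereas the $B$-smoothing creates a cap joining the two outside strands above together with a cup closing off the top of the remaining $(n-1)$-crossing twist. Tracing a single strand through the cup and the remaining $n-1$ crossings shows that this closed-off structure can be simplified, via $n-1$ successive negative Reidemeister I moves, to the corresponding cup of $D_B$; thus $(D_n)_B$ is isotopic to $D_B$ with $n-1$ negative Reidemeister I kinks appended.

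Applying Lemma \ref{lemma:Reidemeister} to these $n-1$ negative Reidemeister I moves yields $\ukh^{a,b}((D_n)_B) \cong \ukh^{a-(n-1),\,b-2(n-1)}(D_B)$; setting $(a,b) = (i-1,\,j-1)$ gives
\[\ukh^{i-1,\,j-1}((D_n)_B) \cong \ukh^{i-n,\,j-2n+1}(D_B),\]
which vanishes by the hypothesis applied with $k = n > 1$ (valid for $n \geq 2$). The relevant segment of the long exact sequence is
\[\ukh^{i-1,\,j-1}((D_n)_B) \to \ukh^{i,j}(D_n) \to \ukh^{i,j}((D_n)_A) = \ukh^{i,j}(D_{n-1}),\]
whose left term vanishes by the shift computation above and whose right term vanishes by the inductive hypothesis. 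Exactness forces $\ukh^{i,j}(D_n) = 0$, completing the induction.

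The main obstacle will be the second step of the diagrammatic identification: verifying that closing the top of the twist via the $B$-smoothing and iteratively applying Reidemeister I produces exactly $n-1$ kinks with the sign yielding the shift $(-(n-1),\,-2(n-1))$ required to match the hypothesis, rather than the opposite sign or a mixture. This requires careful planar tracking of the resulting single strand as it passes up through the closed-off twist, around the cup, and back down, and confirmation that each of the resulting self-crossings is a negative Reidemeister I kink in the sense of Lemma \ref{lemma:Reidemeister}.
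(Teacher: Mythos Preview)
Your proposal is correct and is essentially identical to the paper's proof: both induct on $n$, resolve a crossing in the twist region to obtain $D_{n-1}$ as the $A$-resolution and $D_B$ with $n-1$ negative Reidemeister~I kinks as the $B$-resolution, and then invoke the long exact sequence with the shift $\ukh^{i-1,j-1}((D_n)_B)\cong \ukh^{i-n,j-2n+1}(D_B)$. Your additional care about the sign of the Reidemeister~I kinks is well placed but does not constitute a different argument.
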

\begin{proof}
We proceed by induction on $n$. Since $D_1=D$ and $\ukh^{i,j}(D)$ is trivial, the base case is complete. Suppose that $\ukh^{i,j}(D_{n-1})$ is trivial. An $A$-resolution of a crossing in the twist region of $D_n$ results in the diagram $D_{n-1}$, and a $B$-resolution of crossing in the twist region of $D_n$ results in a diagram that can be transformed into $D_B$ via $n-1$ negative Reidemeister 1 moves. Therefore the long exact sequence \eqref{equation:LES} becomes
\[\cdots \to \ukh^{i-n,j-2n+1}(D_B) \to \ukh^{i,j}(D_n) \to \ukh^{i,j}(D_{n-1})\to\cdots.\]
Since both $\ukh^{i-n,j-2n+1}(D_B)$ and $\ukh^{i,j}(D_{n-1})$ are trivial, it follows that $\ukh^{i,j}(D_n)$ is also trivial.
\end{proof}

Let $D$ be an $A$-almost alternating diagram of the nonsplit link $L$. Theorem \ref{theorem:almostalternating} proves that  $\ukh^{3,4-s_A(D)}(D)$ is trivial, which will then imply that $Kh(D)^{i_{\min}(L)+2,j_{\min}(L)+2}(L)$ is also trivial.
\begin{theorem}
\label{theorem:almostalternating}
If $D$ is $A$-almost alternating, then $\ukh^{3,4-s_A(D)}(D)$ is trivial.
\end{theorem}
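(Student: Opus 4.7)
The plan is to prove $\ukh^{3,4-s_A(D)}(D) = 0$ for $A$-almost alternating $D$ by strong induction on the crossing number $c(D)$. For the base case, the smallest $A$-almost alternating diagrams satisfying Definition \ref{definition:ABalmostalternating} can be verified directly; for very small $c(D)$, no enhanced state lies in bigrading $(3, 4-s_A(D))$ at all, so the claim is vacuous.

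For the inductive step, I would distinguish the four cases enumerated at the start of this subsection according to whether $D_A$ and $D_B$ are $A$-almost alternating as the crossing $x \in R$ varies. Case (1), where neither resolution is $A$-almost alternating for any choice of $x$, is handled directly by Lemma \ref{lemma:base}. For Cases (2), (3), and (4), I would select a crossing $x \in R$ for which (a) at least one of $D_A, D_B$ is $A$-almost alternating (guaranteed by the case hypothesis), (b) the $B$-resolution at $x$ merges two components of the all-$A$ state, so that $s_A(D_B) = s_A(D) - 1$ and hence $3 - s_A(D) = 2 - s_A(D_B)$, and (c) the diagram $D_B$ is itself almost alternating and satisfies the hypotheses of Lemma \ref{lemma:path2}. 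Verifying that such a crossing always exists requires a combinatorial examination of the checkerboard graphs of $D$ near the dealternator.

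With this $x$ fixed, the relevant portion of the long exact sequence is
\[\ukh^{2,2-s_A(D_B)}(D_B) \to \ukh^{3,4-s_A(D)}(D) \to \ukh^{3,4-s_A(D_A)}(D_A),\]
whose left-hand term vanishes by Lemma \ref{lemma:path2}, reducing the problem to showing the right-hand term is trivial. In Case (2), $D_A$ is $A$-almost alternating with $c(D_A) = c(D) - 1$, so the inductive hypothesis applies directly; Case (4) is analogous since $D_A$ is $A$-almost alternating there as well. In Case (3), $D_A$ fails to be $A$-almost alternating, but since $D_A$ retains the dealternator while its tangle remains alternating, $D_A$ must be either alternating or almost alternating of a restricted type. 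If $D_A$ is alternating, the two-diagonal support of Khovanov homology of alternating links combined with the constraints from \eqref{equation:diagonal} force $\ukh^{3, 4-s_A(D_A)}(D_A) = 0$. If $D_A$ is almost alternating but outside the $A$-almost alternating class, a secondary case analysis, possibly mirroring the main argument or exploiting a different resolution, handles it. Throughout, Lemma \ref{lemma:twist} allows reduction from twist regions of arbitrary length to those of length one, so the case analysis only needs to be carried out at crossings not embedded in a longer twist region.

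The main obstacle is the subsidiary analysis of Case (3) when $D_A$ is almost alternating but not $A$-almost alternating, since the inductive hypothesis does not apply and the structure of $\ukh^{3, 4-s_A(D_A)}(D_A)$ must be analyzed based on the precise way in which condition (1), (2), or (3A) of Definition \ref{definition:ABalmostalternating} fails for $D_A$. A secondary technical obstacle is ensuring that, for every $A$-almost alternating diagram $D$ in Cases (2)-(4), a crossing $x \in R$ exists that simultaneously meets criteria (a), (b), and (c); this may require refining the combinatorial choice of $x$ based on face adjacencies between the white regions $u_1, u_2$ and the black regions $v_1, v_2$ incident to the dealternator.
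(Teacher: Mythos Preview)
Your overall architecture---induction on the crossing number with Lemma~\ref{lemma:base} as the base case and the long exact sequence from Theorem~\ref{theorem:LES} for the inductive step---matches the paper's proof exactly. Your treatment of Cases~(2) and~(4), where $D_A$ is $A$-almost alternating and the inductive hypothesis disposes of the $\ukh^{3,4-s_A(D_A)}(D_A)$ term, is also correct in outline.

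The genuine gap is precisely the one you flag as the ``main obstacle'': your Case~(3), where $D_B$ is $A$-almost alternating but $D_A$ is not. The diagram $D_A$ still contains the dealternator, so it is an almost alternating diagram that fails condition~(3A); your proposed ``secondary case analysis, possibly mirroring the main argument'' does not give a terminating induction, since $D_A$ lives outside the class to which the inductive hypothesis applies. The paper resolves this with a concrete geometric observation you are missing: because the edge $\overline{e}$ in $\overline{G}$ corresponding to the resolved crossing lies on a path of length two between $v_1$ and $v_2$, the diagram $D_A$ has the specific form of Figure~\ref{figure:case4}, and a flype followed by a single Reidemeister~2 move transforms it into a reduced \emph{alternating} diagram $D_A^{\mathrm{alt}}$ (Figure~\ref{figure:case4flype}). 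The grading shift from Lemma~\ref{lemma:Reidemeister} then gives $\ukh^{3,4-s_A(D_A)}(D_A)\cong\ukh^{2,2-s_A(D_A^{\mathrm{alt}})}(D_A^{\mathrm{alt}})$, which vanishes because reduced alternating diagrams have Khovanov homology supported on two diagonals. The missing ingredient is thus not a further case split but the recognition that $D_A$ represents an alternating link.

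Your secondary obstacle is also real and is not merely a matter of ``refining the combinatorial choice of~$x$.'' In Case~(3) the diagram $D_B$ is $A$-almost alternating, and condition~(3A) of Definition~\ref{definition:ABalmostalternating} asserts that \emph{no} white region in $R$ shares a crossing with each of $u_1$ and $u_2$; this is incompatible with the hypothesis of Lemma~\ref{lemma:path2}, which requires \emph{exactly one} such region. Hence criterion~(c) can never be met at a Case~(3) crossing. The paper handles the $D_B$ term here by a different mechanism: after removing any nugatory crossing, it invokes the previously established extremal result (part~(1) of Theorem~\ref{theorem:main}, i.e.\ \cite[Theorem~1.3]{DasLow:Extremal}) for the $A$-almost alternating diagram $D_B^{\mathrm{red}}$.
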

\begin{proof}
Suppose that $D$ is $A$-almost alternating and that the tangle containing all of the crossings of $D$ other than the dealternator is $R$. We proceed by induction on $c$, the number of crossings of $D$. There are four cases to consider.
\begin{enumerate}
\item For every crossing in $R$, both $D_A$ and $D_B$ are not $A$-almost alternating.
\item $R$ has a crossing such that both $D_A$ and $D_B$ are $A$-almost alternating,
\item $R$ has a crossing such that $D_A$ is $A$-almost alternating, but $D_B$ is not $A$-almost alternating, and
\item $R$ has a crossing such that $D_B$ is $A$-almost alternating, but $D_A$ is not $A$-almost alternating.
\end{enumerate}

\noindent\textbf{Case 1.} Suppose that both $D_A$ and $D_B$ are not $A$-almost alternating for every crossing in the tangle $R$. This is the base case of the induction, and Lemma \ref{lemma:base} establishes this result.

\noindent\textbf{Case 2.} Suppose that $R$ has a crossing such that both $D_A$ and $D_B$ are $A$-almost alternating. Suppose that $D_A$ has $k\geq 0$ nugatory crossings. Let $D_A^{\text{red}}$ be the diagram $D_A$ with all nugatory crossings removed. Because $D_A$ is $A$-almost alternating, so is $D_A^{\text{red}}$. All nugatory crossings in $D_A$ are positive. Lemma \ref{lemma:Reidemeister} and the fact that $s_A(D_A)=s_A(D_A^{\text{red}})+k$ imply that $\ukh^{3,4-s_A(D_A)}(D_A) \cong \ukh^{3,4-s_A(D_A^{\text{red}})}(D_A^{\text{red}})$.
The long exact sequence in this case is 
\[ \cdots\to\ukh^{2,2-s_A(D_B)}(D_B)\to\ukh^{3,4-s_A(D)}(D)\to\ukh^{3,4-s_A(D_A^{\text{red}})}(D_A^{\text{red}})\to\cdots. \]
Lemma \ref{lemma:path2} implies $\ukh^{2,2-s_A(D_B)}(D_B)$ is trivial, and $\ukh^{3,4-s_A(D_A^{\text{red}})}(D_A^{\text{red}})$ is trivial by the inductive hypothesis. Therefore $\ukh^{3,4-s_A(D)}(D)$ is also trivial.

Now suppose that $D_B$ has $\ell\geq 0$ nugatory crossings. After potentially flyping, the nugatory crossings of $D_B$ can be concentrated into a twist region of $D$. Let $D_1$ be the diagram where the twist region of $D$ is a single crossing. The case where $\ell=0$ is the same as the case above where $k=0$ and implies that $\ukh^{3,4-s_A(D_1)}(D_1)$ is trivial. Because $\ukh^{3-\ell, 2-s_A(D_B)-2\ell}(D_B)$ is trivial, Lemma \ref{lemma:twist} implies that $\ukh^{3,4-s_A(D)}(D)$ is also trivial.

\noindent\textbf{Case 3.} Suppose that $R$ has a crossing such that $D_A$ is $A$-almost alternating, but $D_B$ is not $A$-almost alternating. Let $e$ and $\overline{e}$ be the edges in $G$ and $\overline{G}$ respectively associated to the crossing that is resolved to obtain $D_A$ and $D_B$. Since $D_A$ is $A$-almost alternating, but $D_B$ is not, $e$ is on a path of length three between $u_1$ and $u_2$, but $\overline{e}$ is not on a path of length two between $v_1$ and $v_2$. Thus the diagrams $D$, $D_A$, and $D_B$ are as in Figure \ref{figure:case3}.
\begin{figure}[ht]
\[\begin{tikzpicture}[scale = .7]
\draw (0,0) rectangle (5.25,2);
\draw (0,3) rectangle (5.25,5);
\draw (2.625,1) node{$R_2$};
\draw (2.625,4) node{$R_1$};
\draw (.35,1.8) node{\tiny{$-$}};
\draw (1.2,1.8) node{\tiny{$+$}};
\draw (1.55,1.8) node{\tiny{$-$}};
\draw (2.45,1.8) node{\tiny{$+$}};
\draw (2.8,1.8) node{\tiny{$-$}};
\draw (3.7,1.8) node{\tiny{$+$}};
\draw (4.05,1.8) node{\tiny{$-$}};
\draw (4.95,1.8) node{\tiny{$+$}};

\draw (.35,3.2) node{\tiny{$+$}};
\draw (1.2,3.2) node{\tiny{$-$}};
\draw (1.55,3.2) node{\tiny{$+$}};
\draw (2.45,3.2) node{\tiny{$-$}};
\draw (2.8,3.2) node{\tiny{$+$}};
\draw (3.7,3.2) node{\tiny{$-$}};
\draw (4.05,3.2) node{\tiny{$+$}};
\draw (4.95,3.2) node{\tiny{$-$}};

\draw (1.4,2.5) node{$u_2$};
\draw (0,2.5) node{$u_1$};

\draw (0.25,3) -- (1.25,2);
\draw (.25,2 ) -- (.5,2.25);
\draw (1,2.75) -- (1.25,3);

\draw (1.5,2) -- (1.75,2.25);
\draw (2.25, 2.75) -- (2.5,3);
\draw (1.5,3) -- (2.5,2);

\begin{scope}[xshift = 1.25cm]
\draw (1.5,2) -- (1.75,2.25);
\draw (2.25, 2.75) -- (2.5,3);
\draw (1.5,3) -- (2.5,2);
\end{scope}

\begin{scope}[xshift = 2.5cm]
\draw (1.5,2) -- (1.75,2.25);
\draw (2.25, 2.75) -- (2.5,3);
\draw (1.5,3) -- (2.5,2);
\end{scope}

\draw (2.625,-1) node{$D$};

\begin{scope}[xshift = 7 cm]
\draw (0,0) rectangle (5.25,2);
\draw (0,3) rectangle (5.25,5);
\draw (2.625,1) node{$R_2$};
\draw (2.625,4) node{$R_1$};
\draw (.35,1.8) node{\tiny{$-$}};
\draw (1.2,1.8) node{\tiny{$+$}};
\draw (1.55,1.8) node{\tiny{$-$}};
\draw (2.45,1.8) node{\tiny{$+$}};
\draw (2.8,1.8) node{\tiny{$-$}};
\draw (3.7,1.8) node{\tiny{$+$}};
\draw (4.05,1.8) node{\tiny{$-$}};
\draw (4.95,1.8) node{\tiny{$+$}};

\draw (.35,3.2) node{\tiny{$+$}};
\draw (1.2,3.2) node{\tiny{$-$}};
\draw (1.55,3.2) node{\tiny{$+$}};
\draw (2.45,3.2) node{\tiny{$-$}};
\draw (2.8,3.2) node{\tiny{$+$}};
\draw (3.7,3.2) node{\tiny{$-$}};
\draw (4.05,3.2) node{\tiny{$+$}};
\draw (4.95,3.2) node{\tiny{$-$}};

\draw (1.4,2.5) node{$u_2$};
\draw (0,2.5) node{$u_1$};

\draw (0.25,3) -- (1.25,2);
\draw (.25,2 ) -- (.5,2.25);
\draw (1,2.75) -- (1.25,3);

\draw[rounded corners = 2mm] (1.5,2) -- (1.9,2.5) -- (1.5,3);
\draw[rounded corners = 2mm] (2.5,2) -- (2.1,2.5) -- (2.5,3);

\begin{scope}[xshift = 1.25cm]
\draw (1.5,2) -- (1.75,2.25);
\draw (2.25, 2.75) -- (2.5,3);
\draw (1.5,3) -- (2.5,2);
\end{scope}

\begin{scope}[xshift = 2.5cm]
\draw (1.5,2) -- (1.75,2.25);
\draw (2.25, 2.75) -- (2.5,3);
\draw (1.5,3) -- (2.5,2);
\end{scope}

\draw (2.625,-1) node{$D_A$};

\end{scope}

\begin{scope}[xshift = 14 cm]
\draw (0,0) rectangle (5.25,2);
\draw (0,3) rectangle (5.25,5);
\draw (2.625,1) node{$R_2$};
\draw (2.625,4) node{$R_1$};
\draw (.35,1.8) node{\tiny{$-$}};
\draw (1.2,1.8) node{\tiny{$+$}};
\draw (1.55,1.8) node{\tiny{$-$}};
\draw (2.45,1.8) node{\tiny{$+$}};
\draw (2.8,1.8) node{\tiny{$-$}};
\draw (3.7,1.8) node{\tiny{$+$}};
\draw (4.05,1.8) node{\tiny{$-$}};
\draw (4.95,1.8) node{\tiny{$+$}};

\draw (.35,3.2) node{\tiny{$+$}};
\draw (1.2,3.2) node{\tiny{$-$}};
\draw (1.55,3.2) node{\tiny{$+$}};
\draw (2.45,3.2) node{\tiny{$-$}};
\draw (2.8,3.2) node{\tiny{$+$}};
\draw (3.7,3.2) node{\tiny{$-$}};
\draw (4.05,3.2) node{\tiny{$+$}};
\draw (4.95,3.2) node{\tiny{$-$}};

\draw (1.4,2.5) node{$u_2$};
\draw (0,2.5) node{$u_1$};

\draw (0.25,3) -- (1.25,2);
\draw (.25,2 ) -- (.5,2.25);
\draw (1,2.75) -- (1.25,3);

\draw[rounded corners = 2mm] (1.5,2) -- (2,2.4) -- (2.5,2);
\draw[rounded corners = 2mm] (1.5,3) -- (2,2.7) -- (2.5,3);

\begin{scope}[xshift = 1.25cm]
\draw (1.5,2) -- (1.75,2.25);
\draw (2.25, 2.75) -- (2.5,3);
\draw (1.5,3) -- (2.5,2);
\end{scope}

\begin{scope}[xshift = 2.5cm]
\draw (1.5,2) -- (1.75,2.25);
\draw (2.25, 2.75) -- (2.5,3);
\draw (1.5,3) -- (2.5,2);
\end{scope}

\draw (2.625,-1) node{$D_B$};

\end{scope}

\end{tikzpicture}\]

\caption{The diagrams $D$, $D_A$, and $D_B$ in case 3.}
\label{figure:case3}
\end{figure}
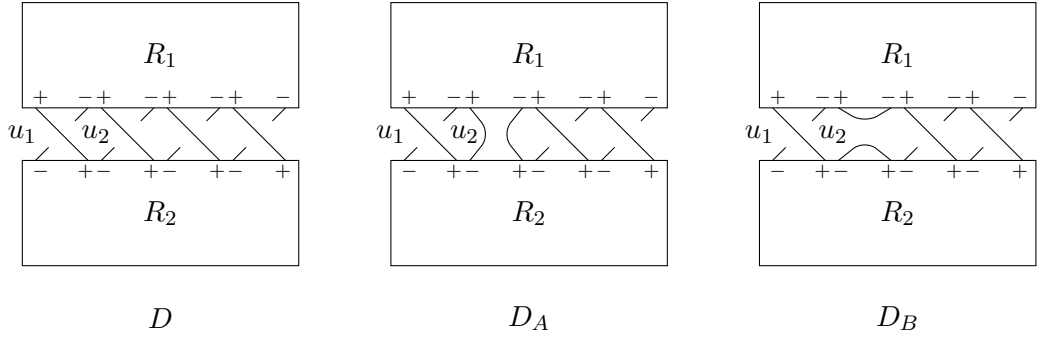

Suppose that $D_A$ has $k\geq 0$ nugatory crossings. Let $D_A^{\text{red}}$ be the diagram $D_A$ with all nugatory crossings removed. Because $D_A$ is $A$-almost alternating, so is $D_A^{\text{red}}$, and all nugatory crossings in $D_A$ are positive. Lemma \ref{lemma:Reidemeister} and the fact that $s_A(D_A)=s_A(D_A^{\text{red}})+k$ imply that $\ukh^{3,4-s_A(D_A)}(D_A) \cong \ukh^{3,4-s_A(D_A^{\text{red}})}(D_A^{\text{red}})$. The long exact sequence in this case is 
\[ \cdots\to\ukh^{2,2-s_A(D_B)}(D_B)\to\ukh^{3,4-s_A(D)}(D)\to\ukh^{3,4-s_A(D_A^{\text{red}})}(D_A^{\text{red}})\to\cdots. \]
Since $D_A^{\text{red}}$ is a reduced, $A$-almost alternating diagram, the inductive hypothesis implies that $\ukh^{3,4-s_A(D_A^{\text{red}})}(D_A^{\text{red}})$ is trivial. Since $D_B$ is an almost alternating diagram where the only crossing in the boundary of $u_1$ and $u_2$ is the deatlernator and where $\adj(u_1,u_2)=1$, Lemma \ref{lemma:path2} implies that $\ukh^{2,2-s_A(D_B)}(D_B)$ is trivial. Therefore $\ukh^{3,4-s_A(D)}(D)$ is also trivial. 

If $D_B$ has $\ell\geq 0$ nugatory crossings, then Lemma \ref{lemma:twist} implies the result just like it did in case 1.

\noindent\textbf{Case 4.} Suppose that $R$ has a crossing such that $D_B$ is $A$-almost alternating, but $D_A$ is not $A$-almost alternating. Let $e$ and $\overline{e}$ be the edges in $G$ and $\overline{G}$ respectively associated to the crossing that is resolved to obtain $D_A$ and $D_B$. Since $D_B$ is $A$-almost alternating, but $D_A$ is not, $e$ is not on a path of length three between $u_1$ and $u_2$, but $\overline{e}$ is on a path of length two between $v_1$ and $v_2$. Thus the diagrams $D$, $D_A$ and $D_B$ are as in Figure \ref{figure:case4}.
\begin{figure}[ht]
\[\begin{tikzpicture}[scale=.8]

\draw (0,0) rectangle (4.5,2);
\draw (0,3) rectangle (4.5,5);

\draw  (2.25,1) node{$R_2$};
\draw (2.25,4) node{$R_1$};

\draw (.25,2) -- (1.25,3);
\draw (.25,3) -- (.5,2.75);
\draw (1,2.25) -- (1.25,2);
\draw (1.75,3) -- (2.75,2);
\draw (1.75,2) -- (2,2.25);
\draw (2.5,2.75) -- (2.75,3);
\draw (3.25,3) -- (4.25,2);
\draw (3.25,2) -- (3.5,2.25);
\draw (4,2.75) -- (4.25,3);

\draw (0.25,3.2) node{$-$};
\draw (1.25,3.2) node{$+$};
\draw (1.75,3.2) node{$-$};
\draw (2.75,3.2) node{$+$};
\draw (3.25,3.2) node{$-$};
\draw (4,3.2) node{$+$};

\draw (0.25,1.8) node{$+$};
\draw (1.25,1.8) node{$-$};
\draw (1.75,1.8) node{$+$};
\draw (2.75,1.8) node{$-$};
\draw (3.25,1.8) node{$+$};
\draw (4,1.8) node{$-$};

\draw (0,2.5) node{$v_1$};
\draw (1.5,2.5) node{$v_2$};

\draw (2.25,-1) node{$D$};

\begin{scope}[xshift = 6cm]

\draw (0,0) rectangle (4.5,2);
\draw (0,3) rectangle (4.5,5);

\draw  (2.25,1) node{$R_2$};
\draw (2.25,4) node{$R_1$};

\draw (.25,2) -- (1.25,3);
\draw (.25,3) -- (.5,2.75);
\draw (1,2.25) -- (1.25,2);

\draw (1.75,3) to[out = -30, in = 180]
(2.25,2.75) to [out = 0, in = 210]
(2.75,3);
\draw (1.75,2) to [out = 30, in = 180]
(2.25,2.25) to [out = 0, in = 150]
(2.75,2);

\draw (3.25,3) -- (4.25,2);
\draw (3.25,2) -- (3.5,2.25);
\draw (4,2.75) -- (4.25,3);

\draw (0.25,3.2) node{$-$};
\draw (1.25,3.2) node{$+$};
\draw (1.75,3.2) node{$-$};
\draw (2.75,3.2) node{$+$};
\draw (3.25,3.2) node{$-$};
\draw (4,3.2) node{$+$};

\draw (0.25,1.8) node{$+$};
\draw (1.25,1.8) node{$-$};
\draw (1.75,1.8) node{$+$};
\draw (2.75,1.8) node{$-$};
\draw (3.25,1.8) node{$+$};
\draw (4,1.8) node{$-$};

\draw (0,2.5) node{$v_1$};
\draw (1.5,2.5) node{$v_2$};

\draw (2.25,-1) node{$D_A$};
\end{scope}

\begin{scope}[xshift = 12cm]

\draw (0,0) rectangle (4.5,2);
\draw (0,3) rectangle (4.5,5);

\draw  (2.25,1) node{$R_2$};
\draw (2.25,4) node{$R_1$};

\draw (.25,2) -- (1.25,3);
\draw (.25,3) -- (.5,2.75);
\draw (1,2.25) -- (1.25,2);

\draw (1.75,3) to[out = -60, in = 90]
(2,2.5) to [out = 270, in = 60]
(1.75,2);
\draw (2.75,3) to [out = 240, in = 90]
(2.5,2.5) to [out = 270, in = 120]
(2.75,2);

\draw (3.25,3) -- (4.25,2);
\draw (3.25,2) -- (3.5,2.25);
\draw (4,2.75) -- (4.25,3);

\draw (0.25,3.2) node{$-$};
\draw (1.25,3.2) node{$+$};
\draw (1.75,3.2) node{$-$};
\draw (2.75,3.2) node{$+$};
\draw (3.25,3.2) node{$-$};
\draw (4,3.2) node{$+$};

\draw (0.25,1.8) node{$+$};
\draw (1.25,1.8) node{$-$};
\draw (1.75,1.8) node{$+$};
\draw (2.75,1.8) node{$-$};
\draw (3.25,1.8) node{$+$};
\draw (4,1.8) node{$-$};

\draw (0,2.5) node{$v_1$};
\draw (1.5,2.5) node{$v_2$};

\draw (2.25,-1) node{$D_B$};
\end{scope}

\end{tikzpicture}\]
\caption{The diagrams $D$, $D_A$, and $D_B$ in case $4$.}
\label{figure:case4}
\end{figure}
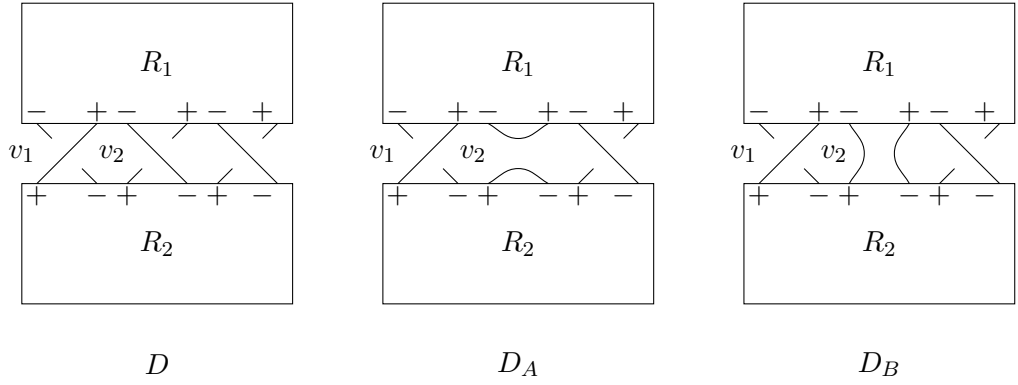

The diagram $D_B$ can have at most one nugatory crossing, and if it does, that crossing must be the rightmost crossing in $D_B$ above. If $D_B$ has this nugatory crossing, let $D_B^{\text{red}}$ be the diagram $D_B$ with the nugatory crossing removed, and if $D_B$ has no nugatory crossings, then let $D_B^{\text{red}}$ be $D_B$. In either case, $D_B^{\text{red}}$ is $A$-almost alternating. 

The diagram $D_A$ is a non-alternating diagram of an alternating link. It can be transformed into the alternating diagram $D_A^{\text{alt}}$ by a flype followed by a Reidemeister 2 move, as in Figure \ref{figure:case4flype}.
\begin{figure}[ht]
\[\begin{tikzpicture}[scale=.8]

\draw (0,0) rectangle (4.5,2);
\draw (0,3) rectangle (4.5,5);

\draw  (2.25,1) node{$R_2$};
\draw (2.25,4) node{$R_1$};

\draw (.25,2) -- (1.25,3);
\draw (.25,3) -- (.5,2.75);
\draw (1,2.25) -- (1.25,2);
\draw (1.75,3) to[out = -30, in = 180]
(2.25,2.75) to [out = 0, in = 210]
(2.75,3);
\draw (1.75,2) to [out = 30, in = 180]
(2.25,2.25) to [out = 0, in = 150]
(2.75,2);
\draw (3.25,3) -- (4.25,2);
\draw (3.25,2) -- (3.5,2.25);
\draw (4,2.75) -- (4.25,3);

\draw (0.25,3.2) node{$-$};
\draw (1.25,3.2) node{$+$};
\draw (1.75,3.2) node{$-$};
\draw (2.75,3.2) node{$+$};
\draw (3.25,3.2) node{$-$};
\draw (4,3.2) node{$+$};

\draw (0.25,1.8) node{$+$};
\draw (1.25,1.8) node{$-$};
\draw (1.75,1.8) node{$+$};
\draw (2.75,1.8) node{$-$};
\draw (3.25,1.8) node{$+$};
\draw (4,1.8) node{$-$};


\draw (2.25,-1) node{$D_A$};

\begin{scope}[xshift = 6cm]

\draw (1.5,0) rectangle (3,2);
\draw (1.5,3) rectangle (3,5);

\draw  (2.25,1) node{$R_2$};
\draw (2.25,4) node{$R_1$};

\begin{knot}[
    consider self intersections,
    clip width = 4,
    ignore endpoint intersections = true,
    end tolerance = 2pt
    ]
    \strand (1.5,.5) to [out = 135, in = 270]
    (.5,2) to [out = 90, in = 225]
    (1.5,3.5);
    \strand (1.5,1.5) to [out = 135, in = 270]
    (.5,3) to [out = 90, in = 225]
    (1.5,4.5);
    \strand (3,.5) to [out = 45, in = 270]
    (4,2) to [out = 90, in = 315]
    (3,3.5);
    \strand (3,1.5) to [out = 45, in = 270]
    (4,3) to [out = 90, in = 315]
    (3,4.5);

\end{knot}

\draw (1.75,3) to[out = -30, in = 180]
(2.25,2.75) to [out = 0, in = 210]
(2.75,3);
\draw (1.75,2) to [out = 30, in = 180]
(2.25,2.25) to [out = 0, in = 150]
(2.75,2);

\draw (1.75,1.8) node{$+$};
\draw (2.75,1.8) node{$-$};
\draw (1.7,1.5) node{$-$};
\draw (1.7,.5) node{$+$};
\draw (2.8,.5) node{$-$};
\draw (2.8,1.5) node{$+$};

\draw (1.75,3.2) node{$-$};
\draw (2.75,3.2) node{$+$};
\draw (1.7,3.5) node{$+$};
\draw (1.7,4.5) node{$-$};
\draw (2.8,4.5) node{$+$};
\draw (2.8,3.5) node{$-$};


\draw (2.25,-1) node{$D_A$};
\end{scope}

\begin{scope}[xshift = 12cm]

\draw (1.5,0) rectangle (3,2);
\draw (1.5,3) rectangle (3,5);

\draw  (2.25,1) node{$R_2$};
\draw (2.25,4) node[yscale=-1]{$R_1$};

    \draw (1.5,.5) to [out = 135, in = 270]
    (0,2.5) to [out = 90, in = 225]
    (1.5,4.5);
    \draw (1.5,1.5) to [out = 150, in = 270]
    (.5,2.5) to [out = 90, in = 210]
    (1.5,3.5);
    \draw (3,.5) to [out = 45, in = 270]
    (4.5,2.5) to [out = 90, in = 315]
    (3,4.5);
    \draw (3,1.5) to [out = 30, in = 270]
    (4,2.5) to [out = 90, in = 330]
    (3,3.5);

\draw (1.75,2) to [out = 30, in = 180]
(2.25,2.25) to [out = 0, in = 150]
(2.75,2);

\draw (1.75,5) to [out = 30, in = 180]
(2.25,5.25) to [out = 0, in = 150]
(2.75,5);

\draw (1.75,3.2) node{$-$};
\draw (2.75,3.2) node{$+$};
\draw (1.7,1.5) node{$-$};
\draw (1.7,.5) node{$+$};
\draw (2.8,.5) node{$-$};
\draw (2.8,1.5) node{$+$};

\draw (1.75,1.8) node{$-$};
\draw (2.75,1.8) node{$+$};
\draw (1.7,3.5) node{$+$};
\draw (1.7,4.5) node{$-$};
\draw (2.8,4.5) node{$+$};
\draw (2.8,3.5) node{$-$};


\draw (2.25,-1) node{$D_A^{\alt}$};
\end{scope}

\end{tikzpicture}\]
\caption{The diagram $D_A$ can be transformed into the alternating diagram $D_A^{\text{alt}}$.}
\label{figure:case4flype}
\end{figure}
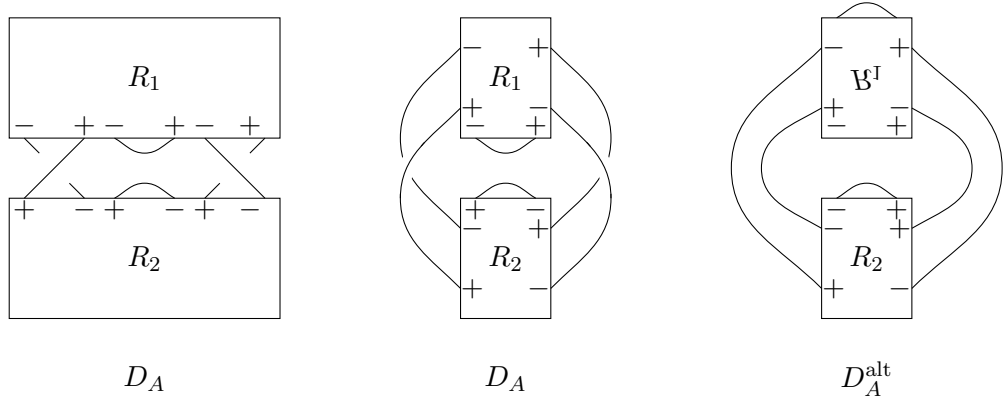

The long exact sequence in this case is
\[\cdots \ukh^{2,3-s_A(D)}(D_B) \to \ukh^{3,4-s_A(D)}(D)\to \ukh^{3,4-s_A(D)}(D_A)\to\cdots. \]
Since $s_A(D) = s_A(D_A) = s_A(D_B)+1$, the long exact sequence can be written as
\[\cdots \ukh^{2,2-s_A(D_B)}(D_B) \to \ukh^{3,4-s_A(D)}(D)\to \ukh^{3,4-s_A(D_A)}(D_A)\to\cdots. \]
If there is a nugatory crossing in $D_B$, it is negative. Let $k$ be the number of nugatory crossings in $D_B$. Then 
\[\ukh^{2,2-s_A(D_B)}(D_B) \cong \ukh^{2-k,2-s_A(D_B^\text{red})-2k}(D_B^{\text{red}})\] is trivial by part (1) in Theorem \ref{theorem:main} (which is also \cite[Theorem 1.3]{DasLow:Extremal}) since $D_B^{\text{red}}$ is $A$-almost alternating. Also, since $s_A(D_A)=s_A(D_A^{\text{alt}})+1$, it follows that $\ukh^{3,4-s_A(D_A)}(D_A)\cong \ukh^{2,2-s_A(D_A^{\text{alt}})}(D_A^{\text{alt}})$ is trivial because $D_A^{\text{alt}}$ is a reduced alternating diagram and hence $\ukh^{i,j}(D_A^{\text{alt}})$ is trivial unless $s_A(D_A^{\text{alt}}) -2 \leq 2i-j\leq s_A(D_A^{\text{alt}})$. Therefore $\ukh^{3,4-s_A(D)}(D)$ is trivial, as desired.

Now suppose that $D_A$ has $k$ nugatory crossings. After potentially flyping $D$ a number of times, we may consider $D$, $D_A$, and $D_B$ to be the diagrams in Figure \ref{figure:case4twist}.
\begin{figure}[ht]
\[\begin{tikzpicture}[scale=.8]

\draw (0,0) rectangle (5,2);
\draw (0,6) rectangle (5,8);

\fill (2.5,4.85) circle (.05cm);
\fill (2.5,4.55) circle (.05cm);
\fill (2.5,5.15) circle (.05cm);

\begin{knot}[
	consider self intersections,
 	clip width = 5,
 	ignore endpoint intersections = true,
 ]
 \flipcrossings{4,1};
 \strand (2,2) to [out = 90, in =270]
 (3,3) to [out = 90, in = 270]
 (2,4);
 \strand (3,2) to [out = 90, in = 270]
 (2,3) to [out = 90, in = 270]
 (3,4);
 
 \strand (.25,2) to [out = 90, in = 270]
 (.25,3.5) to [out = 90, in = 270]
 (1.25,4.5) to [out = 90, in = 270]
 (1.25,6);
 \strand (1.25,2) to [out = 90, in =270]
 (1.25,3.5) to [out = 90, in = 270] 
 (.25,4.5) to [out = 90, in = 270]
 (.25,6);

  \strand (3.75,2) to [out = 90, in = 270]
 (3.75,3.5) to [out = 90, in = 270]
 (4.75,4.5) to [out = 90, in = 270]
 (4.75,6);
 \strand (4.75,2) to [out = 90, in =270]
 (4.75,3.5) to [out = 90, in = 270] 
 (3.75,4.5) to [out = 90, in = 270]
 (3.75,6);

\end{knot}

\draw (3,6) -- (3,5.7);
\draw (2,6) -- (2,5.7);

\draw [decorate,decoration={brace,amplitude=3pt}] (3.1,5.8) -- (3.1,2.2) node [black,midway,xshift = 13pt] {\tiny $k{+}1$};

\draw (.25,6.2) node{$-$};
\draw (1.25,6.2) node{$+$};
\draw (2,6.2) node{$-$};
\draw (3,6.2) node{$+$};
\draw (3.75,6.2) node{$-$};
\draw (4.75,6.2) node{$+$};

\draw (.25,1.8) node{$+$};
\draw (1.25,1.8) node{$-$};
\draw (2,1.8) node{$+$};
\draw (3,1.8) node{$-$};
\draw (3.75,1.8) node{$+$};
\draw (4.75,1.8) node{$-$};

\draw (2.5,1) node{$R_2$};
\draw (2.5,7) node{$R_1$};
\draw (2.5,-1) node{$D$};

\begin{scope}[xshift = 6cm]

\draw (0,0) rectangle (5,2);
\draw (0,6) rectangle (5,8);

\fill (2.5,4.85) circle (.05cm);
\fill (2.5,4.55) circle (.05cm);
\fill (2.5,5.15) circle (.05cm);

\begin{knot}[
	consider self intersections,
 	clip width = 5,
 	ignore endpoint intersections = true,
 ]
 \flipcrossings{3};
 \strand (2,2) to [out = 90, in =90]
 (3,2);
 
 \strand (2,4) to [out = 270, in = 90]
 (3,3) to [out = 270, in = 0]
 (2.5,2.6) to [out = 180, in = 270]
 (2,3) to [out = 90, in = 270]
 (3,4);

 \strand (.25,2) to [out = 90, in = 270]
 (.25,3.5) to [out = 90, in = 270]
 (1.25,4.5) to [out = 90, in = 270]
 (1.25,6);
 \strand (1.25,2) to [out = 90, in =270]
 (1.25,3.5) to [out = 90, in = 270] 
 (.25,4.5) to [out = 90, in = 270]
 (.25,6);

  \strand (3.75,2) to [out = 90, in = 270]
 (3.75,3.5) to [out = 90, in = 270]
 (4.75,4.5) to [out = 90, in = 270]
 (4.75,6);
 \strand (4.75,2) to [out = 90, in =270]
 (4.75,3.5) to [out = 90, in = 270] 
 (3.75,4.5) to [out = 90, in = 270]
 (3.75,6);

\end{knot}

\draw (3,6) -- (3,5.7);
\draw (2,6) -- (2,5.7);

\draw [decorate,decoration={brace,amplitude=3pt}] (3.1,5.8) -- (3.1,2.8) node [black,midway,xshift = 10pt] {\tiny$k$};

\draw (.25,6.2) node{$-$};
\draw (1.25,6.2) node{$+$};
\draw (2,6.2) node{$-$};
\draw (3,6.2) node{$+$};
\draw (3.75,6.2) node{$-$};
\draw (4.75,6.2) node{$+$};

\draw (.25,1.8) node{$+$};
\draw (1.25,1.8) node{$-$};
\draw (2,1.8) node{$+$};
\draw (3,1.8) node{$-$};
\draw (3.75,1.8) node{$+$};
\draw (4.75,1.8) node{$-$};

\draw (2.5,1) node{$R_2$};
\draw (2.5,7) node{$R_1$};
\draw (2.5,-1) node{$D_A$};

\end{scope}

\begin{scope}[xshift = 12cm]

\draw (0,0) rectangle (5,2);
\draw (0,6) rectangle (5,8);

\fill (2.5,4.85) circle (.05cm);
\fill (2.5,4.55) circle (.05cm);
\fill (2.5,5.15) circle (.05cm);

\begin{knot}[
	consider self intersections,
 	clip width = 5,
 	ignore endpoint intersections = true,
 ]
 \flipcrossings{3,1};
 \strand (2,2) to [out = 90, in =270]
 (2.3,2.5) to [out = 90, in = 270]
 (2,3)to [out = 90, in = 270]
 (3,4);
 \strand (3,2) to [out = 90, in = 270]
 (2.7,2.5) to [out=90, in =270]
 (3,3) to [out = 90, in = 270]
 (2,4);
 
 \strand (.25,2) to [out = 90, in = 270]
 (.25,3.5) to [out = 90, in = 270]
 (1.25,4.5) to [out = 90, in = 270]
 (1.25,6);
 \strand (1.25,2) to [out = 90, in =270]
 (1.25,3.5) to [out = 90, in = 270] 
 (.25,4.5) to [out = 90, in = 270]
 (.25,6);

  \strand (3.75,2) to [out = 90, in = 270]
 (3.75,3.5) to [out = 90, in = 270]
 (4.75,4.5) to [out = 90, in = 270]
 (4.75,6);
 \strand (4.75,2) to [out = 90, in =270]
 (4.75,3.5) to [out = 90, in = 270] 
 (3.75,4.5) to [out = 90, in = 270]
 (3.75,6);

\end{knot}

\draw (3,6) -- (3,5.7);
\draw (2,6) -- (2,5.7);

\draw [decorate,decoration={brace,amplitude=3pt}] (3.1,5.8) -- (3.1,3) node [black,midway,xshift = 10pt] {\tiny $k$};

\draw (.25,6.2) node{$-$};
\draw (1.25,6.2) node{$+$};
\draw (2,6.2) node{$-$};
\draw (3,6.2) node{$+$};
\draw (3.75,6.2) node{$-$};
\draw (4.75,6.2) node{$+$};

\draw (.25,1.8) node{$+$};
\draw (1.25,1.8) node{$-$};
\draw (2,1.8) node{$+$};
\draw (3,1.8) node{$-$};
\draw (3.75,1.8) node{$+$};
\draw (4.75,1.8) node{$-$};

\draw (2.5,1) node{$R_2$};
\draw (2.5,7) node{$R_1$};
\draw (2.5,-1) node{$D_B$};

\end{scope}

\end{tikzpicture}\]

\caption{The diagrams $D$, $D_A$, and $D_B$ in case $4$ when the resolution crossing is part of a twist region.}
\label{figure:case4twist}
\end{figure}
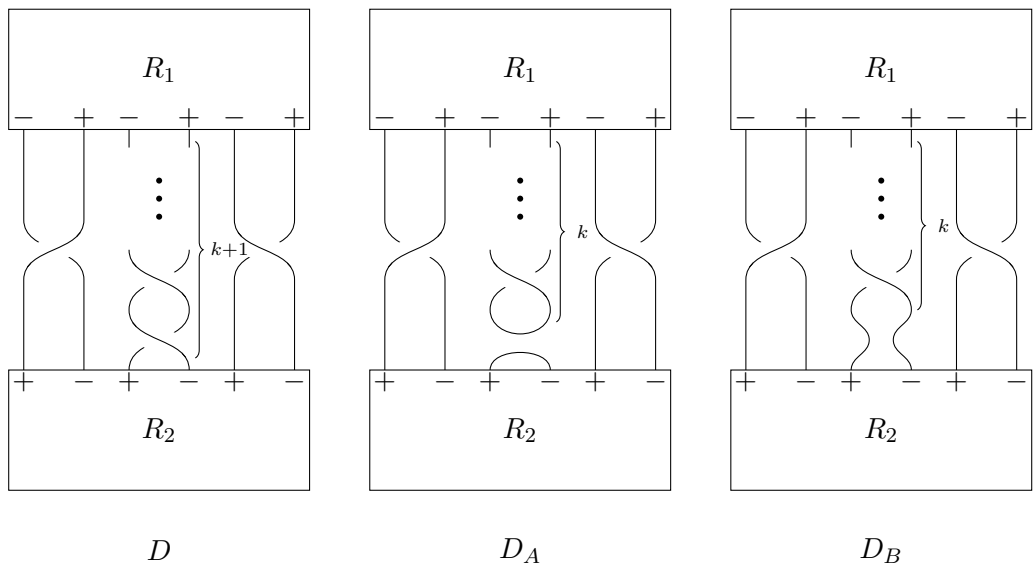
In this case, $D_B$ is a reduced $A$-almost alternating diagram, and $D_A$ is a non-alternating diagram of an alternating link. Let $D_A^{\text{alt}}$ be the alternating diagram obtained from $D_A$ by $k$ positive Reidemeister 1 moves, a flype, and a Reidemeister 2 move. Then $\ukh^{3,4-s_A(D_A)}(D_A) \cong \ukh^{2,3-s_A(D_A)+k}(D_A^{\text{alt}})\cong \ukh^{2,2-s_A(D_A^{\text{alt}})}(D_A^{\text{alt}})$. As in the previous case, $\ukh^{2,2-s_A(D_A^{\text{alt}})}(D_A^{\text{alt}})$ is trivial because $D_A^{\text{alt}}$ is a reduced alternating diagram. Since $D_B$ is a reduced $A$-almost alternating diagram $\ukh^{2,2-s_A(D_B)}(D_B)$ is trivial. Because the long exact sequence is 
\[\cdots \ukh^{2,2-s_A(D_B)}(D_B) \to \ukh^{3,4-s_A(D)}(D)\to \ukh^{3,4-s_A(D_A)}(D_A)\to\cdots \]
and both $\ukh^{2,2-s_A(D_B)}(D_B)$ and $\ukh^{3,4-s_A(D_A)}(D_A)$ are trivial, it follows that $\ukh^{3,4-s_A(D)}(D)$ is trivial, as desired.
\end{proof}

\subsection{$A$-adequate Turaev genus one links}

In this subsection, we prove in Theorem \ref{theorem:tg1} that if $D$ is an $A$-adequate Turaev genus one diagram of the nonsplit link $L$, then $\ukh^{2,2-s_A(D)}(D)\cong Kh^{i_{\min}(L)+2,j_{\min}(L)+2}(L)$ is trivial. Together Theorems \ref{theorem:almostalternating} and \ref{theorem:tg1} imply Theorem \ref{theorem:main}. 

The proof of Theorem \ref{theorem:tg1} broadly breaks down into two parts depending on the length of the shortest cycle in the state graph $G_A(D)$, called the \textit{girth} of $G_A(D)$. The state graph $G_A(D)$ is the graph whose vertices are in one-to-one correspondence with the components of the all-$A$ state of $D$ and whose edges are in one-to-one correspondence with the crossings of $D$; an edge $e$ in $G_A(D)$ is incident to two vertices $u$ and $v$ if and only if the components of the all-$A$ state corresponding to $u$ and $v$ respectively are incident to the crossing corresponding to $e$. Because $D$ is $A$-adequate, it follows that $\girth(G_A(D))\geq 2$. If the $\girth(G_A(D))\geq 3$, then the chain complex $CKh^{2,2-s_A(D)}(D)$ is trivial, and thus so is $\ukh^{2,2-s_A(D)}(D)$. Proving that $\ukh^{2,2-s_A(D)}(D)$ is trivial when $\girth(G_A(D))=2$ takes considerably more work and is the bulk of this subsection.

In the following lemma, we show that if $D$ is a diagram of a link $L$ such that the girth of $G_A(D)$ is at least three, then $\ukh^{2,2-s_A(D)}(D)$ is trivial (without the assumption that $D$ has Turaev genus one).
\begin{lemma}
\label{lemma:girth3}
Let $D$ be a diagram of a nonsplit link such that the girth of $G_A(D)$ is at least three. Then $\ukh^{2,2-s_A(D)}(D)$ is trivial.
\end{lemma}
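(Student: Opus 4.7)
The plan is to prove the stronger statement that the chain group $CKh^{2,2-s_A(D)}(D)$ itself vanishes, from which triviality of the homology follows immediately. An enhanced state $S$ contributing to this chain group must satisfy $b(S)=2$ and $\theta(S)=j(S)-b(S)=-s_A(D)$. Writing $s(S)$ for the number of components in the underlying Kauffman state and $p,n$ for the numbers of $+$ and $-$ labels, we have $p+n=s(S)$ and $p-n=-s_A(D)$, so nonnegativity of $p$ forces $s(S)\geq s_A(D)$. Thus it suffices to establish the opposite inequality $s(S)\leq s_A(D)-2$ using the girth hypothesis.

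The main step is to track how the component count changes as we pass from the all-$A$ state to the Kauffman state of $S$ by performing two $A$-to-$B$ resolution changes, at crossings $x_1$ and $x_2$. Each such change either merges two distinct components (decreasing the count by one) or splits one component into two (increasing by one). The absence of loops in $G_A(D)$, guaranteed by $\girth(G_A(D))\geq 3$, implies that the two arcs at $x_1$ lie on two distinct all-$A$ components $u\ne v$, so the first change is a merge, giving an intermediate state with $s_A(D)-1$ components in which $u$ and $v$ have fused into a single component $w$. For the second change, let $\{u',v'\}$ be the pair of all-$A$ components meeting $x_2$; since $\girth(G_A(D))\geq 3$ rules out both loops and multi-edges, we have $u'\ne v'$ and $\{u',v'\}\ne\{u,v\}$. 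A short case analysis on whether $\{u',v'\}$ is disjoint from $\{u,v\}$ or meets it in exactly one element confirms that the two arcs at $x_2$ still lie on distinct components of the intermediate state (either both on unchanged all-$A$ components, or one on $w$ and one on an unchanged component), so the second change is also a merge.

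Therefore $s(S)=s_A(D)-2$, which contradicts the requirement $s(S)\geq s_A(D)$ derived above. Hence no enhanced state can contribute to $CKh^{2,2-s_A(D)}(D)$, the chain group is zero, and $\ukh^{2,2-s_A(D)}(D)$ is trivial. There is no serious obstacle in this argument; the only delicate point is verifying that the exclusion of loops and multi-edges in $G_A(D)$ is precisely what rules out a split at the second crossing, making the girth $\geq 3$ hypothesis exactly the right condition for this counting argument.
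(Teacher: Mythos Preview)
Your proof is correct and follows essentially the same approach as the paper: both arguments show that the chain group $CKh^{2,2-s_A(D)}(D)$ itself vanishes by establishing that any Kauffman state with exactly two $B$-resolutions has $s_A(D)-2$ components when $\girth(G_A(D))\geq 3$, so the minimum polynomial grading in homological grading $2$ is $4-s_A(D)$. The paper states this component count as a general fact for $i<\girth(G_A(D))$ without spelling out the details, whereas you verify it explicitly for $i=2$ via the merge/split case analysis; the substance is the same.
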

\begin{proof}
For each homological grading $i$, define $m_i$ to be the minimum polynomial grading $j$ such that there is an enhanced state of $D$ with homological grading $i$ and polynomial grading $j$. Then $m_0=-s_A(D)$ because the minimum polynomial grading is obtained by labeling each component of the all-$A$ state with a $-$. If $i<\girth(G_A(D))$, then every Kauffman state in homological grading $i$ has $s_A(D) - i$ components. Therefore, the minimal polynomial grading is obtained by labeling each of these components with a $-$, and hence if $i<\girth(G_A(D))$, then $m_i = 2i - s_A(D)$. In particular, if $\girth(G_A(D))\geq 3$, then $m_2 = 4-s_A(D)$. Hence there are no enhanced states with homological grading $2$ and polynomial grading $2-s_A(D)$, and consequently $\ukh^{2,2-s_A(D)}(D)$ is trivial.
\end{proof}
 
For related work on the extremal Khovanov homology of links with high girth, see Sazdanovi\'c and Scofield \cite{SC:Girth}.

\begin{theorem}
\label{theorem:tg1}
Let $D$ be a Turaev genus one link diagram, as in Figure \ref{figure:tg1}. 
\begin{enumerate}
    \item If $D$ is $A$-adequate, then $\ukh^{2,2-s_A(D)}(D)$ is trivial.
    \item If $D$ is $B$-adequate, then $\ukh^{c(D)-2,c(D)+s_B(D)-2}(D)$ is trivial.
\end{enumerate}
\end{theorem}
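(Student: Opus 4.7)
The plan is to reduce Part (2) to Part (1) via mirror symmetry, then prove Part (1) by dichotomizing on $\girth(G_A(D))$. For the reduction, observe that if $D$ is $B$-adequate Turaev genus one then $\overline{D}$ is $A$-adequate Turaev genus one with $s_A(\overline{D}) = s_B(D)$, so the triviality of $\ukh^{2,2-s_B(D)}(\overline{D})$ translates to the triviality of $\ukh^{c(D)-2, c(D)-2+s_B(D)}(D)$ using the mirror formulas in \eqref{equation:Mirror} (a completely parallel argument performed at the top of the Khovanov complex handles the $\mathbb{Z}$-torsion).

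For Part (1), if $\girth(G_A(D)) \geq 3$ then Lemma \ref{lemma:girth3} gives the result immediately. The substantive case is $\girth(G_A(D)) = 2$, where I would first explicitly identify the chain complex $CKh^{*, 2-s_A(D)}(D)$. Because $D$ is $A$-adequate, every single $B$-switch from the all-$A$ state is a merge, forcing the only enhanced states in homological grading $1$ and polynomial grading $2-s_A(D)$ to be the states $T_e$ with a single $B$-resolution at a crossing $e$ and all components labeled with $-$. Two $B$-switches from the all-$A$ state produce $s_A(D)$ components precisely when the second switch splits a circle created by the first merge, which happens exactly when the two crossings form a 2-cycle (a pair of parallel edges) in $G_A(D)$; hence the generators $S_{\{e, e'\}}$ of $CKh^{2, 2-s_A(D)}(D)$ correspond bijectively to such 2-cycles, again with all components labeled $-$.

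Using the merge and split rules from Figure \ref{figure:mergesplit}, the Khovanov differential takes the form
\[
d(T_e) \;=\; \sum_{\{e, e'\} \text{ a 2-cycle in } G_A(D)} \varepsilon(e, e')\, S_{\{e, e'\}},
\]
where $\varepsilon(e,e') \in \{\pm 1\}$ is the Koszul sign determined by the ordering of the crossings. An analogous description of $CKh^{3, 2-s_A(D)}(D)$ and of the next differential (whose generators index "merge--split--split" triples in $G_A(D)$) is also required. The plan is then to show $\operatorname{im}(d^{1, 2-s_A(D)}) = \ker(d^{2, 2-s_A(D)})$, which yields the triviality of $\ukh^{2, 2-s_A(D)}(D)$ integrally.

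The central structural step, and the main obstacle, is to show that for an $A$-adequate Turaev genus one diagram of the form in Figure \ref{figure:tg1}, every 2-cycle of $G_A(D)$ consists of two parallel crossings lying within a single twist region inside one of the alternating tangles $R_i$. This uses the alternating structure of each $R_i$ together with the prescribed $+/-$ sign pattern around the boundary of each $R_i$ to rule out 2-cycles whose two crossings lie in different $R_i$. Once this localization is established, the subcomplex $CKh^{*, 2-s_A(D)}(D)$ splits as a direct sum indexed by the maximal multi-edge clusters of $G_A(D)$. On each such cluster (a $k$-fold multi-edge in $G_A(D)$), a direct calculation verifies kernel equals image; the prototype is the triple-edge calculation in which the three images $v_1, v_2, v_3$ of the incoming differential satisfy $v_1 + v_2 + v_3 = 0$ and jointly span the two-dimensional kernel of the outgoing differential, yielding vanishing homology integrally. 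The general $k$-edge case is a straightforward extension of this prototype.
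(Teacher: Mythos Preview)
Your mirror reduction for Part~(2) and your use of Lemma~\ref{lemma:girth3} for the $\girth(G_A(D))\geq 3$ case are fine. The real gap is your ``central structural step'': the claim that in an $A$-adequate Turaev genus one diagram every $2$-cycle of $G_A(D)$ lies within a single twist region of some $R_i$ is \emph{false}. The all-$A$ state of such a diagram has components that are not confined to a single tangle $R_i$ --- these ``exterior'' components run along the strands connecting consecutive tangles in Figure~\ref{figure:tg1}. An exterior component can share two crossings (not in a common twist region) with an interior component of some $R_{2i}$, and two exterior components can share two crossings lying in different odd-indexed tangles; both configurations produce $2$-cycles in $G_A(D)$ that your localization does not account for. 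These are precisely the situations the paper isolates in Figures~\ref{figure:tg1case3} and~\ref{figure:tg1case4}. Once such $2$-cycles are present, your proposed direct-sum splitting of $CKh^{*,2-s_A(D)}(D)$ over multi-edge clusters collapses, and the ``triple-edge prototype'' computation no longer applies.

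The paper avoids this obstacle by abandoning the direct chain-level computation in the $\girth=2$ case and instead running an induction on the crossing number via the long exact sequence of Theorem~\ref{theorem:LES}. After twist-reducing, one resolves a carefully chosen crossing so that $D_A$ is (up to nugatory crossings or a connected-sum rearrangement) again $A$-adequate Turaev genus one with fewer crossings, and one shows $\ukh^{1,-s_A(D_B)}(D_B)=0$ by a second resolution and an explicit boundary computation. The three subcases (twist-region $2$-cycle, interior/exterior $2$-cycle, exterior/exterior $2$-cycle) each require their own diagrammatic analysis. If you want to salvage a purely chain-level argument, you would need to enumerate and handle these non-local $2$-cycles explicitly, which is considerably more work than the prototype you outlined.
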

\begin{proof}
We complete this proof by induction. The diagram $D$ with the fewest number of crossings satisfying the conditions of the theorem is the diagram of the trefoil with four crossings depicted in Figure \ref{figure:figure8tref}. Since this is an $A$-adequate diagram of the trefoil, it follows that $Kh^{2,2-s_A(D)}(D)$ is trivial.
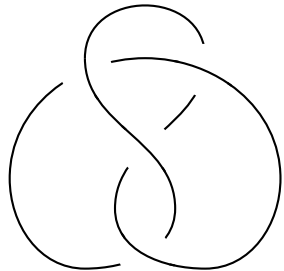
\begin{figure}[ht]
\[\begin{tikzpicture}[scale = .8, thick]
\begin{knot}[
    consider self intersections,
    clip width = 6,
    ignore endpoint intersections = true,
    end tolerance = 2pt
    ]
    \flipcrossings{1,4}
    \strand (.75,1.5) to [out = 270, in = 180]
    (2,0) to [out = 0, in = 270]
    (3.5,1) to [out = 90, in = 270]
    (2,3.5) to [out = 90, in = 90,looseness = 1.5]
    (4,3.5) to [out = 270, in = 90]
    (2.5,1) to [out = 270, in = 180]
    (4,0) to [out = 0, in = 270]
    (5.25,1.5) to [out = 90, in = 0]
    (3,3.5) to [out = 180, in = 90]
    (.75,1.5);
    
    \end{knot}
    \end{tikzpicture}\]
\caption{The base case to the proof of Theorem \ref{theorem:tg1} is this four crossing diagram of the trefoil.}
\label{figure:figure8tref}
\end{figure}

Let $G_A(D)$ have $n$ crossings for some $n>4$. Suppose by way of induction that if $D'$ is an $A$-adequate diagram with Turaev genus one and fewer than $n$ crossings, then $\ukh^{2,2-s_A(D')}(D')$ is trivial. Assume that the diagram $D$ is twist reduced. 

If $\girth(G_A(D))=2$, then there are two components of the all-$A$ state that share two different crossings. Partition the components of the all-$A$ state into two sets: the components that are completely within an alternating tangle $R_i$ (called interior components) and the components that are not completely contained inside a single alternating tangle $R_i$ (called exterior components). Suppose that there is a cycle of length two in $G_A(D)$ and that the crossings corresponding to the edges of the cycle are not in the same twist region. Because each tangle $R_i$ is alternating and twist reduced, at least one of the components corresponding to vertices in the cycle must be an external component. If the cycle contains vertices corresponding to an internal and an external component, then $D$ has the format as depicted in Figure \ref{figure:tg1case3}. If the cycle contains vertices corresponding to two external components, then $D$ has the format as depicted in Figure \ref{figure:tg1case4}.

\begin{figure}[ht]
\[\begin{tikzpicture}[scale=.6]

\draw (0,0) rectangle (1,3);
\draw (2,0) rectangle (3,3);
\begin{knot}[
    consider self intersections,
    clip width = 5,
    ignore endpoint intersections = true,
    end tolerance = 2pt
    ]
    \strand (1,2.75) to [out = 0, in = 180] (2,1.75);
    \strand (2,2.75) to [out = 180, in =0] (1,1.75);
    
    \strand (1,1.25) to [out = 0, in =180] (2,0.25);
    \strand (1,0.25) to [out = 0, in = 180] (2,1.25);
    
    \end{knot}
    
    \draw (1.5, 1.5) circle (2.5cm);
      \draw (7.5, 1.5) circle (2.5cm);
     \draw (-4.5, 1.5) circle (2.5cm);
     \draw (0,2.75) to [out = 180 , in = 45] (-2.73,3.27);
     \draw (0,.25) to [out = 180, in = -45] (-2.73, -.27);
     \draw (3,2.75) to [out = 0, in = 135] (5.73,3.27);
     \draw (3,.25) to [out = 0, in = 225] (5.73,-.27);
     \draw (-6.27,3.27) to [out = 135, in = 0] (-7.5,3.2);
     \draw (-6.27,-.27) to [out = 225, in = 0] (-7.5,-.2);
     \draw (9.27,3.27) to [out = 45, in = 180] (10.5,3.2);
     \draw (9.27,-.27) to [out = -45, in = 180] (10.5,-.2);
     
\draw (-4.5,1.5) node{$R_{2i-1}$};
\draw (7.5,1.5) node{$R_{2i+1}$};
\draw (-3,3) node{\tiny{$+$}};
\draw (-3,0) node{\tiny{$-$}};
\draw (-6,3) node{\tiny{$-$}};
\draw (-6,0) node{\tiny{$+$}};
\begin{scope}[xshift = 12cm]
\draw (-3,3) node{\tiny{$+$}};
\draw (-3,0) node{\tiny{$-$}};
\draw (-6,3) node{\tiny{$-$}};
\draw (-6,0) node{\tiny{$+$}};
\end{scope}

\draw (.25,.25) node{\tiny{$-$}};
\draw (.25,2.75) node{\tiny{$+$}};
\draw (.75,.25) node{\tiny{$+$}};
\draw (.75,1.25) node{\tiny{$-$}};
\draw (.75,1.75) node{\tiny{$+$}};
\draw (.75,2.75) node{\tiny{$-$}};

\draw (2.25,.25) node{\tiny{$-$}};
\draw (2.25,1.25) node{\tiny{$+$}};
\draw (2.25,1.75) node{\tiny{$-$}};
\draw (2.25,2.75) node{\tiny{$+$}};
\draw (2.75,.25) node{\tiny{$+$}};
\draw (2.75,2.75) node{\tiny{$-$}};


\end{tikzpicture}\]
    \caption{An interior component of the all-$A$ state shares two crossings not in a twist region with an exterior component. }
    \label{figure:tg1case3}
\end{figure}
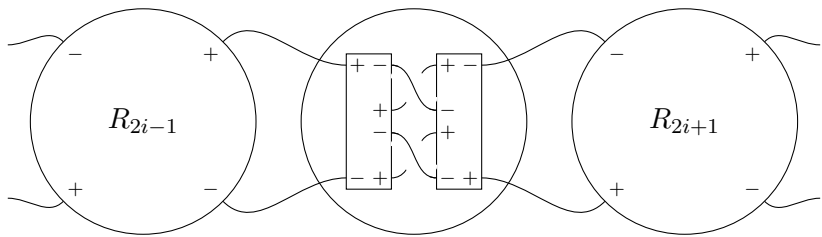

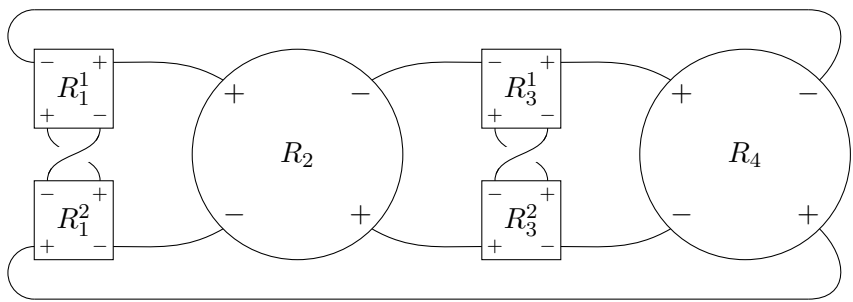
\begin{figure}[ht]
\[\begin{tikzpicture}[scale = .7]

\draw (0,0) rectangle (1.5,1.5);
\draw (0,2.5) rectangle (1.5,4);

\draw (5,2) circle (2cm);

\draw (8.5,0) rectangle (10,1.5);
\draw (8.5,2.5) rectangle (10,4);

\draw (13.5,2) circle (2cm);

\draw (.75,.75) node{$R_1^2$};
\draw (.75,3.25) node{$R_1^1$};

\draw (.25,.25) node{\tiny{$+$}};
\draw (1.25,.25) node{\tiny{$-$}};
\draw (.25,1.25) node{\tiny{$-$}};
\draw (1.25,1.25) node{\tiny{$+$}};

\draw (.25,2.75) node{\tiny{$+$}};
\draw (1.25,2.75) node{\tiny{$-$}};
\draw (.25,3.75) node{\tiny{$-$}};
\draw (1.25,3.75) node{\tiny{$+$}};

\begin{scope}[xshift = 8.5cm]
\draw (.25,.25) node{\tiny{$+$}};
\draw (1.25,.25) node{\tiny{$-$}};
\draw (.25,1.25) node{\tiny{$-$}};
\draw (1.25,1.25) node{\tiny{$+$}};

\draw (.25,2.75) node{\tiny{$+$}};
\draw (1.25,2.75) node{\tiny{$-$}};
\draw (.25,3.75) node{\tiny{$-$}};
\draw (1.25,3.75) node{\tiny{$+$}};

\draw (3.8,3.15) node{$+$};
\draw (3.8, .85)node{$-$};
\draw (6.2,3.15) node{$-$};
\draw (6.2,.85) node{$+$};
\end{scope}

\draw (5,2) node{$R_2$};

\draw (3.8,3.15) node{$+$};
\draw (3.8, .85)node{$-$};
\draw (6.2,3.15) node{$-$};
\draw (6.2,.85) node{$+$};

\draw (3.586,3.414) to [out = 150, in = 0] (1.5,3.75);
\draw (3.586,.586) to [out = 210, in = 0] (1.5,.25);
\draw (6.414,3.414) to [out = 30, in = 180] (8.5,3.75);
\draw (6.414,.586) to [out = -30, in = 180] (8.5,.25);

\begin{scope}[xshift = 8.5cm]
\draw (3.586,3.414) to [out = 150, in = 0] (1.5,3.75);
\draw (3.586,.586) to [out = 210, in = 0] (1.5,.25);
\end{scope}

\draw (9.25,.75) node{$R_3^2$};
\draw (9.25,3.25) node{$R_3^1$};
\draw (13.5,2) node{$R_4$};

\draw (0,.25) arc (90:270:.5cm);
\draw (0,-.75) -- (14.7,-.75);

\draw (0,3.75) arc (270:90:.5cm);
\draw (0,4.75) -- (14.7,4.75);

\draw (14.914, .586) to [out = -45, in = 0, looseness = 1.5] (14.7,-.75);
\draw (14.914,3.414) to [out = 45, in = 0, looseness=1.5] (14.7,4.75);

\begin{knot}[
    consider self intersections,
    clip width = 5,
    ignore endpoint intersections = true,
    end tolerance = 2pt
    ]
    \strand (.25,1.5) to [out = 90, in = 270] (1.25,2.5);
    \strand (.25,2.5) to [out = 270, in = 90] (1.25,1.5);
    
        \strand (8.75,1.5) to [out = 90, in = 270] (9.75,2.5);
    \strand (8.75,2.5) to [out = 270, in = 90] (9.75,1.5);

    \end{knot}

\end{tikzpicture}\]
    \caption{Two exterior components share two crossings not in a twist region.}
    \label{figure:tg1case4}
\end{figure}

The proof breaks down into the following four cases:
\begin{enumerate}
    \item $\girth(G_A(D))\geq 3$,
    \item $\girth(G_A(D))=2$ and $G_A(D)$ has a cycle of length two whose edges correspond to crossings in an $A$-twist region,
    \item $\girth(G_A(D))=2$ and $D$ is a diagram as in Figure \ref{figure:tg1case3}, or
    \item $\girth(G_A(D))=2$, $D$ is a diagram as in Figure \ref{figure:tg1case4}.
\end{enumerate}

\noindent\textbf{Case 1:} If $\girth(G_A(D))\geq 3$, then Lemma \ref{lemma:girth3} implies the result.\smallskip

\noindent\textbf{Case 2:} Suppose that $\girth(G_A(D))=2$ and that $G_A(D)$ contains $k\geq 2$ edges in an $A$-twist region. If a crossing in the $A$-twist region is resolved, then the long exact sequence is
\[\cdots \ukh^{1,-s_A(D_B)}(D_B)\to \ukh^{2,2-s_A(D)}(D)\to \ukh^{2,2-s_A(D_A)}(D_A)\to\cdots\]
where the diagram $D_B$ has $k-1$ negative Reidemeister one twists. Let $D_B^{\text{red}}$ be the diagram $D_B$ after removing $k-1$ negative Reidemeister one twists. Then $\ukh^{1,-s_A(D_B)}(D_B)\cong \ukh^{2-k,2-s_A(D_B^{\text{red}})}(D_B^{\text{red}})$. The diagram $D_A$ is Turaev genus one, $A$-adequate, and has one fewer crossing than $D$. Thus the inductive hypothesis implies that $\ukh^{2,2-s_A(D_A)}(D_A)$ is trivial. For all $k\geq 2$, the inequality $2-s_A(D_B^{\text{red}})-2k < -s_A(D_B^{\text{red}})$ holds. Since there are no enhanced states in polynomial grading $j$ where $j<-s_A(D_B^{\text{red}})$, it follows that $\ukh^{2-k,2-s_A(D_B^{\text{red}})}(D_B^{\text{red}})\cong \ukh^{1,-s_A(D_B)}(D_B)$ is trivial. Therefore, the long exact sequence implies that $\ukh^{2,2-s_A(D)}(D)$ is also trivial. \smallskip

\noindent\textbf{Case 3:} Suppose that $\girth(G_A(D))=2$ and $D$ is a diagram as in Figure \ref{figure:tg1case3}. Furthermore, suppose that the two crossings depicted in Figure \ref{figure:tg1case3} are the only two crossings shared by the depicted exterior and interior components of $s_A(D)$. If they were not the only two crossings, then there would be an $A$-twist region and case 2 would suffice to prove the desired result.

Let $D_A$ and $D_B$ be the resolutions of $D$ at the top crossing. If either of the two regions joined by the $A$-resolution are interior faces of $R_{2i}$ then $D_A$ is a Turaev genus one diagram in the format of Figure \ref{figure:case3DA}. In this case, the bottom crossing could be nugatory. If it is, one can remove it to get a reduced, $A$-adequate, Turaev genus one diagram. In either case, the inductive hypothesis implies that $\ukh^{2,2-s_A(D_A)}(D_A)$ is trivial. If both of the regions to the left and to the right of the top crossing are faces meeting the boundary of $R_{2i}$, then $D_A$ is the connected sum of a Turaev genus one diagram in the format of Figure \ref{figure:tg1} with an alternating link. Let $\widetilde{D}_A$ be the diagram where the reverse of the alternating summand is attached (see Figure \ref{figure:case3DA2}). Then $\ukh^{i,j}(D_A)\cong\ukh^{i,j}(\widetilde{D}_A)$ for all $i$ and $j$. Again the bottom crossing could be nugatory. If so, it can be removed to obtain a reduced, $A$-adequate, Turaev genus one diagram. In either case, the inductive hypothesis implies that $\ukh^{2,2-s_A(\widetilde{D}_A)}(\widetilde{D}_A)\cong\ukh^{2,2-s_A(D_A)}(D_A)$ is trivial.

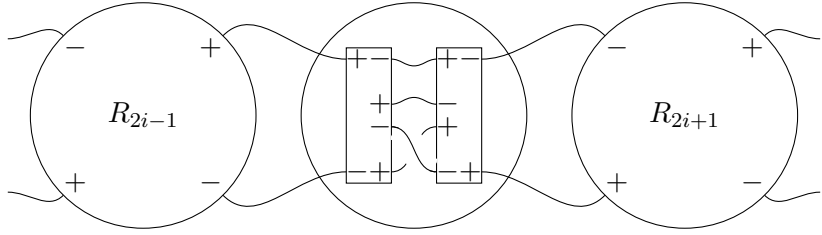
\begin{figure}[ht]
\[\begin{tikzpicture}[scale=.6]

\draw (0,0) rectangle (1,3);
\draw (2,0) rectangle (3,3);

\draw (1,2.75) to [out = 0, in = 180]
(1.5,2.6) to [out = 0, in = 180]
(2,2.75);
\draw (1,1.75) to [out = 0, in = 180]
(1.5,1.9) to [out = 0, in = 180]
(2,1.75);
\begin{knot}[
    consider self intersections,
    clip width = 5,
    ignore endpoint intersections = true,
    end tolerance = 2pt
    ]
    
    \strand (1,1.25) to [out = 0, in =180] (2,0.25);
    \strand (1,0.25) to [out = 0, in = 180] (2,1.25);
    
    \end{knot}
    
    \draw (1.5, 1.5) circle (2.5cm);
      \draw (7.5, 1.5) circle (2.5cm);
     \draw (-4.5, 1.5) circle (2.5cm);
     \draw (0,2.75) to [out = 180 , in = 45] (-2.73,3.27);
     \draw (0,.25) to [out = 180, in = -45] (-2.73, -.27);
     \draw (3,2.75) to [out = 0, in = 135] (5.73,3.27);
     \draw (3,.25) to [out = 0, in = 225] (5.73,-.27);
     \draw (-6.27,3.27) to [out = 135, in = 0] (-7.5,3.2);
     \draw (-6.27,-.27) to [out = 225, in = 0] (-7.5,-.2);
     \draw (9.27,3.27) to [out = 45, in = 180] (10.5,3.2);
     \draw (9.27,-.27) to [out = -45, in = 180] (10.5,-.2);
     
\draw (-4.5,1.5) node{$R_{2i-1}$};
\draw (7.5,1.5) node{$R_{2i+1}$};
\draw (-3,3) node{$+$};
\draw (-3,0) node{$-$};
\draw (-6,3) node{$-$};
\draw (-6,0) node{$+$};
\begin{scope}[xshift = 12cm]
\draw (-3,3) node{$+$};
\draw (-3,0) node{$-$};
\draw (-6,3) node{$-$};
\draw (-6,0) node{$+$};
\end{scope}

\draw (.25,.25) node{$-$};
\draw (.25,2.75) node{$+$};
\draw (.75,.25) node{$+$};
\draw (.75,1.25) node{$-$};
\draw (.75,1.75) node{$+$};
\draw (.75,2.75) node{$-$};

\draw (2.25,.25) node{$-$};
\draw (2.25,1.25) node{$+$};
\draw (2.25,1.75) node{$-$};
\draw (2.25,2.75) node{$+$};
\draw (2.75,.25) node{$+$};
\draw (2.75,2.75) node{$-$};


\end{tikzpicture}\]
\caption{The diagram $D_A$ in case 3 when at least one of the regions joined by the $A$-resolution of the top crossing is an interior face.}
\label{figure:case3DA}
\end{figure}

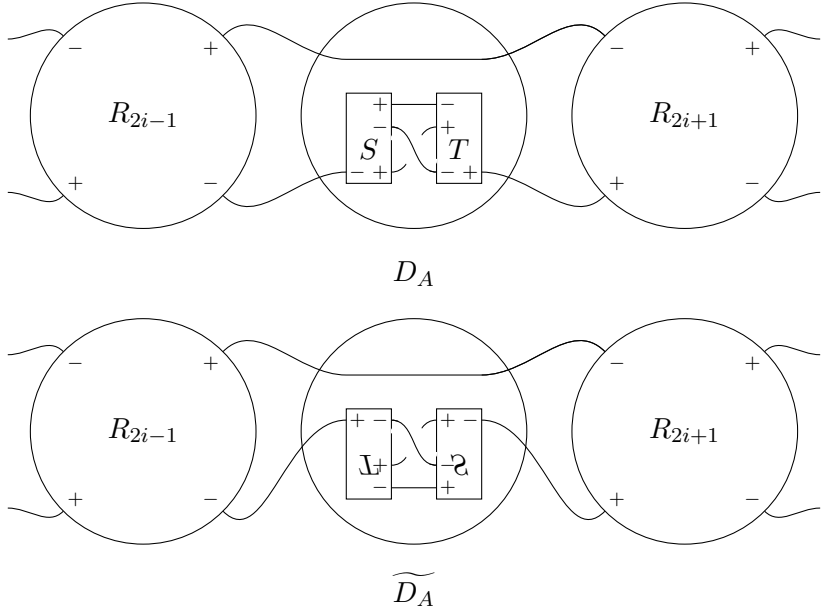
\begin{figure}[ht]
\[\begin{tikzpicture}[scale=.6]

\draw (0,0) rectangle (1,2);
\draw (2,0) rectangle (3,2);

\draw (1,1.75) --(2,1.75);
\begin{knot}[
    consider self intersections,
    clip width = 5,
    ignore endpoint intersections = true,
    end tolerance = 2pt
    ]
    
    \strand (1,1.25) to [out = 0, in =180] (2,0.25);
    \strand (1,0.25) to [out = 0, in = 180] (2,1.25);
    
    \end{knot}
    
    \draw (1.5, 1.5) circle (2.5cm);
      \draw (7.5, 1.5) circle (2.5cm);
     \draw (-4.5, 1.5) circle (2.5cm);
     \draw (5.73,3.27) to [out = 135, in = 0] 
     (3,2.75) to [out = 180, in = 0]
     (0,2.75) to [out = 180 , in = 45] (-2.73,3.27);
     \draw (0,.25) to [out = 180, in = -45] (-2.73, -.27);
     \draw (3,2.75) to [out = 0, in = 135] (5.73,3.27);
     \draw (3,.25) to [out = 0, in = 225] (5.73,-.27);
     \draw (-6.27,3.27) to [out = 135, in = 0] (-7.5,3.2);
     \draw (-6.27,-.27) to [out = 225, in = 0] (-7.5,-.2);
     \draw (9.27,3.27) to [out = 45, in = 180] (10.5,3.2);
     \draw (9.27,-.27) to [out = -45, in = 180] (10.5,-.2);
     
\draw (-4.5,1.5) node{$R_{2i-1}$};
\draw (7.5,1.5) node{$R_{2i+1}$};
\draw (-3,3) node{\tiny{$+$}};
\draw (-3,0) node{\tiny{$-$}};
\draw (-6,3) node{\tiny{$-$}};
\draw (-6,0) node{\tiny{$+$}};
\begin{scope}[xshift = 12cm]
\draw (-3,3) node{\tiny{$+$}};
\draw (-3,0) node{\tiny{$-$}};
\draw (-6,3) node{\tiny{$-$}};
\draw (-6,0) node{\tiny{$+$}};
\end{scope}

\draw (.25,.25) node{\tiny{$-$}};
\draw (.75,.25) node{\tiny{$+$}};
\draw (.75,1.25) node{\tiny{$-$}};
\draw (.75,1.75) node{\tiny{$+$}};

\draw (2.25,.25) node{\tiny{$-$}};
\draw (2.25,1.25) node{\tiny{$+$}};
\draw (2.25,1.75) node{\tiny{$-$}};
\draw (2.75,.25) node{\tiny{$+$}};

\draw (.5,.75)  node{$S$};
\draw (2.5,.75) node{$T$};
\draw (1.5,-2) node{$D_A$};


\begin{scope}[yshift = - 7cm]
\draw (0,0) rectangle (1,2);
\draw (2,0) rectangle (3,2);

\draw (1,.25) --(2,.25);
\begin{knot}[
    consider self intersections,
    clip width = 5,
    ignore endpoint intersections = true,
    end tolerance = 2pt
    ]
    
    \strand (1,1.75) to [out = 0, in =180] (2,0.75);
    \strand (1,0.75) to [out = 0, in = 180] (2,1.75);
    
    \end{knot}
    
    \draw (1.5, 1.5) circle (2.5cm);
      \draw (7.5, 1.5) circle (2.5cm);
     \draw (-4.5, 1.5) circle (2.5cm);
     \draw (5.73,3.27) to [out = 135, in = 0] 
     (3,2.75) to [out = 180, in = 0]
     (0,2.75) to [out = 180 , in = 45] (-2.73,3.27);
     \draw (0,1.75) to [out = 180, in = -45] (-2.73, -.27);
     \draw (3,2.75) to [out = 0, in = 135] (5.73,3.27);
     \draw (3,1.75) to [out = 0, in = 225] (5.73,-.27);
     \draw (-6.27,3.27) to [out = 135, in = 0] (-7.5,3.2);
     \draw (-6.27,-.27) to [out = 225, in = 0] (-7.5,-.2);
     \draw (9.27,3.27) to [out = 45, in = 180] (10.5,3.2);
     \draw (9.27,-.27) to [out = -45, in = 180] (10.5,-.2);
     
\draw (-4.5,1.5) node{$R_{2i-1}$};
\draw (7.5,1.5) node{$R_{2i+1}$};
\draw (-3,3) node{\tiny{$+$}};
\draw (-3,0) node{\tiny{$-$}};
\draw (-6,3) node{\tiny{$-$}};
\draw (-6,0) node{\tiny{$+$}};
\begin{scope}[xshift = 12cm]
\draw (-3,3) node{\tiny{$+$}};
\draw (-3,0) node{\tiny{$-$}};
\draw (-6,3) node{\tiny{$-$}};
\draw (-6,0) node{\tiny{$+$}};
\end{scope}

\draw (.25,1.75) node{\tiny{$+$}};
\draw (.75,.25) node{\tiny{$-$}};
\draw (.75,.75) node{\tiny{$+$}};
\draw (.75,1.75) node{\tiny{$-$}};

\draw (2.25,.25) node{\tiny{$+$}};
\draw (2.25,.75) node{\tiny{$-$}};
\draw (2.25,1.75) node{\tiny{$+$}};
\draw (2.75,1.75) node{\tiny{$-$}};

\draw (2.5,.75)  node[yscale=-1]{$S$};
\draw (.5,.75) node[yscale=-1]{$T$};

\draw (1.5,-2) node{$\widetilde{D_A}$};

\end{scope}

\end{tikzpicture}\]
\caption{The diagrams $D_A$ and $\widetilde{D}_A$ in case 3 when both of the regions joined by the $A$-resolution of the top crossing are exterior faces. The diagram $\widetilde{D}_A$ has the same format as the standard Turaev genus one diagram in Figure \ref{figure:tg1}.}
\label{figure:case3DA2}
\end{figure}

The diagram $D_B$ is not $A$-adequate. Let $D_{BA}$ and $D_{BB}$ be the resolutions of $D_B$ at the bottom crossing, as in Figure \ref{figure:case3DB}. Since $D_{BA}$ is $A$-adequate, it follows that $\ukh^{1,-s_A(D_{BA})}(D_{BA})$ is trivial. The diagram $D_{BB}$ is a connected sum of alternating diagrams. Therefore $\ukh^{0,-s_A(D_{BB})}(D_{BB})\cong\mathbb{Z}$ and is generated by the all-$A$ Kauffman state $S_{BB}^A$ with $-$ labels on all components. The long exact sequence for $D_{BB}$, $D_B$, and $D_{BA}$ is given by
\[ \cdots\to \ukh^{0,-s_A(D_{BB})}(D_{BB})\xrightarrow{f^*} \ukh^{1,1-s_A(D_B)}(D_B) 
\to  \ukh^{1,-s_A(D_{BA})}(D_{BA})\to\cdots.\]

\begin{figure}[ht]
\[\begin{tikzpicture}[scale=.6]

\draw (0,0) rectangle (1,3);
\draw (2,0) rectangle (3,3);

\draw (1,2.75) to [out = 0, in = 0, looseness = 1.3] (1,1.75);
\draw (2,2.75) to [out = 180, in = 180, looseness = 1.3] (2,1.75);
\draw (1.5,-1.5) node{$D_B$};

\begin{knot}[
    consider self intersections,
    clip width = 5,
    ignore endpoint intersections = true,
    end tolerance = 2pt
    ]
    
    \strand (1,1.25) to [out = 0, in =180] (2,0.25);
    \strand (1,0.25) to [out = 0, in = 180] (2,1.25);
    
    \end{knot}
    
    \draw (1.5, 1.5) circle (2.5cm);
      \draw (7.5, 1.5) circle (2.5cm);
     \draw (-4.5, 1.5) circle (2.5cm);
     \draw (0,2.75) to [out = 180 , in = 45] (-2.73,3.27);
     \draw (0,.25) to [out = 180, in = -45] (-2.73, -.27);
     \draw (3,2.75) to [out = 0, in = 135] (5.73,3.27);
     \draw (3,.25) to [out = 0, in = 225] (5.73,-.27);
     \draw (-6.27,3.27) to [out = 135, in = 0] (-7.5,3.2);
     \draw (-6.27,-.27) to [out = 225, in = 0] (-7.5,-.2);
     \draw (9.27,3.27) to [out = 45, in = 180] (10.5,3.2);
     \draw (9.27,-.27) to [out = -45, in = 180] (10.5,-.2);
     
\draw (-4.5,1.5) node{$R_{2i-1}$};
\draw (7.5,1.5) node{$R_{2i+1}$};
\draw (-3,3) node{\tiny{$+$}};
\draw (-3,0) node{\tiny{$-$}};
\draw (-6,3) node{\tiny{$-$}};
\draw (-6,0) node{\tiny{$+$}};
\begin{scope}[xshift = 12cm]
\draw (-3,3) node{\tiny{$+$}};
\draw (-3,0) node{\tiny{$-$}};
\draw (-6,3) node{\tiny{$-$}};
\draw (-6,0) node{\tiny{$+$}};
\end{scope}

\draw (.25,.25) node{\tiny{$-$}};
\draw (.25,2.75) node{\tiny{$+$}};
\draw (.75,.25) node{\tiny{$+$}};
\draw (.75,1.25) node{\tiny{$-$}};
\draw (.75,1.75) node{\tiny{$+$}};
\draw (.75,2.75) node{\tiny{$-$}};

\draw (2.25,.25) node{\tiny{$-$}};
\draw (2.25,1.25) node{\tiny{$+$}};
\draw (2.25,1.75) node{\tiny{$-$}};
\draw (2.25,2.75) node{\tiny{$+$}};
\draw (2.75,.25) node{\tiny{$+$}};
\draw (2.75,2.75) node{\tiny{$-$}};


\begin{scope}[yshift=-7cm]
\draw (0,0) rectangle (1,3);
\draw (2,0) rectangle (3,3);

\draw (1,2.75) to [out = 0, in = 0, looseness = 1.3] (1,1.75);
\draw (2,2.75) to [out = 180, in = 180, looseness = 1.3] (2,1.75);

\draw (1,1.25) to [out = 0, in = 180] (1.5,1.1) to [out =0, in = 180] (2,1.25);
\draw (1,.25) to [out = 0, in = 180] (1.5,.4) to [out =0, in = 180] (2,.25);

\draw (1.5,-1.5) node{$D_{BA}$};

\begin{knot}[
    consider self intersections,
    clip width = 5,
    ignore endpoint intersections = true,
    end tolerance = 2pt
    ]
    
    
    \end{knot}
    
    \draw (1.5, 1.5) circle (2.5cm);
      \draw (7.5, 1.5) circle (2.5cm);
     \draw (-4.5, 1.5) circle (2.5cm);
     \draw (0,2.75) to [out = 180 , in = 45] (-2.73,3.27);
     \draw (0,.25) to [out = 180, in = -45] (-2.73, -.27);
     \draw (3,2.75) to [out = 0, in = 135] (5.73,3.27);
     \draw (3,.25) to [out = 0, in = 225] (5.73,-.27);
     \draw (-6.27,3.27) to [out = 135, in = 0] (-7.5,3.2);
     \draw (-6.27,-.27) to [out = 225, in = 0] (-7.5,-.2);
     \draw (9.27,3.27) to [out = 45, in = 180] (10.5,3.2);
     \draw (9.27,-.27) to [out = -45, in = 180] (10.5,-.2);
     
\draw (-4.5,1.5) node{$R_{2i-1}$};
\draw (7.5,1.5) node{$R_{2i+1}$};
\draw (-3,3) node{\tiny{$+$}};
\draw (-3,0) node{\tiny{$-$}};
\draw (-6,3) node{\tiny{$-$}};
\draw (-6,0) node{\tiny{$+$}};
\begin{scope}[xshift = 12cm]
\draw (-3,3) node{\tiny{$+$}};
\draw (-3,0) node{\tiny{$-$}};
\draw (-6,3) node{\tiny{$-$}};
\draw (-6,0) node{\tiny{$+$}};
\end{scope}

\draw (.25,.25) node{\tiny{$-$}};
\draw (.25,2.75) node{\tiny{$+$}};
\draw (.75,.25) node{\tiny{$+$}};
\draw (.75,1.25) node{\tiny{$-$}};
\draw (.75,1.75) node{\tiny{$+$}};
\draw (.75,2.75) node{\tiny{$-$}};

\draw (2.25,.25) node{\tiny{$-$}};
\draw (2.25,1.25) node{\tiny{$+$}};
\draw (2.25,1.75) node{\tiny{$-$}};
\draw (2.25,2.75) node{\tiny{$+$}};
\draw (2.75,.25) node{\tiny{$+$}};
\draw (2.75,2.75) node{\tiny{$-$}};

\end{scope}

\begin{scope}[yshift=-14cm]
\draw (0,0) rectangle (1,3);
\draw (2,0) rectangle (3,3);

\draw (1,2.75) to [out = 0, in = 0, looseness = 1.3] (1,1.75);
\draw (2,2.75) to [out = 180, in = 180, looseness = 1.3] (2,1.75);

\draw (1,1.25) to [out = 0, in = 0, looseness = 1.3] (1,.25);
\draw (2,1.25) to [out = 180, in = 180, looseness = 1.3] (2,.25);

\draw (1.5,-1.5) node{$D_{BB}$};

\begin{knot}[
    consider self intersections,
    clip width = 5,
    ignore endpoint intersections = true,
    end tolerance = 2pt
    ]
    
    
    \end{knot}
    
    \draw (1.5, 1.5) circle (2.5cm);
      \draw (7.5, 1.5) circle (2.5cm);
     \draw (-4.5, 1.5) circle (2.5cm);
     \draw (0,2.75) to [out = 180 , in = 45] (-2.73,3.27);
     \draw (0,.25) to [out = 180, in = -45] (-2.73, -.27);
     \draw (3,2.75) to [out = 0, in = 135] (5.73,3.27);
     \draw (3,.25) to [out = 0, in = 225] (5.73,-.27);
     \draw (-6.27,3.27) to [out = 135, in = 0] (-7.5,3.2);
     \draw (-6.27,-.27) to [out = 225, in = 0] (-7.5,-.2);
     \draw (9.27,3.27) to [out = 45, in = 180] (10.5,3.2);
     \draw (9.27,-.27) to [out = -45, in = 180] (10.5,-.2);
     
\draw (-4.5,1.5) node{$R_{2i-1}$};
\draw (7.5,1.5) node{$R_{2i+1}$};
\draw (-3,3) node{\tiny{$+$}};
\draw (-3,0) node{\tiny{$-$}};
\draw (-6,3) node{\tiny{$-$}};
\draw (-6,0) node{\tiny{$+$}};
\begin{scope}[xshift = 12cm]
\draw (-3,3) node{\tiny{$+$}};
\draw (-3,0) node{\tiny{$-$}};
\draw (-6,3) node{\tiny{$-$}};
\draw (-6,0) node{\tiny{$+$}};
\end{scope}

\draw (.25,.25) node{\tiny{$-$}};
\draw (.25,2.75) node{\tiny{$+$}};
\draw (.75,.25) node{\tiny{$+$}};
\draw (.75,1.25) node{\tiny{$-$}};
\draw (.75,1.75) node{\tiny{$+$}};
\draw (.75,2.75) node{\tiny{$-$}};

\draw (2.25,.25) node{\tiny{$-$}};
\draw (2.25,1.25) node{\tiny{$+$}};
\draw (2.25,1.75) node{\tiny{$-$}};
\draw (2.25,2.75) node{\tiny{$+$}};
\draw (2.75,.25) node{\tiny{$+$}};
\draw (2.75,2.75) node{\tiny{$-$}};

\end{scope}

\end{tikzpicture}\]

\caption{The diagrams $D_B$, $D_{BA}$, and $D_{BB}$ from case 3.}
\label{figure:case3DB}
\end{figure}
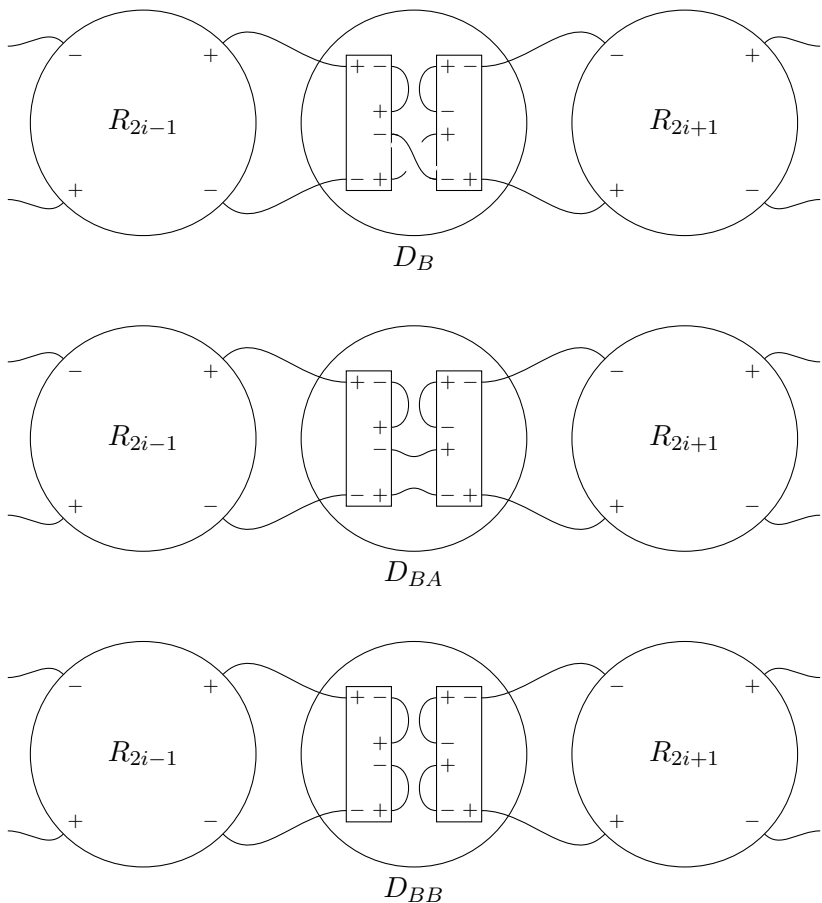

Because $\ukh^{1,-s_A(D_{BA})}(D_{BA})$ is trivial, if the image of $f^*$ is trivial, then $\ukh^{1,-s_A(D_B)}(D_B)$ is also trivial. Since $\ukh^{0,-s_A(D_{BB})}(D_{BB})\cong \mathbb{Z}$, it suffices to check that $f^*[S_{BB}^A]=[f(S_{BB}^A)]$ is trivial. The enhanced state $f(S_{BB}^A)$ is the state of $D_B$ where the bottom crossing is an $B$-resolution, all other crossings are $A$-resolutions, and every component is marked with a $-$. Let $S_{B}^A$ be the enhanced state of $D_B$ where every resolution is an $A$-resolution and every component is marked with a $-$. There is exactly one crossing (the bottom crossing) that when changed from $A$ to $B$ results in more components. Therefore $d(S_{B}^A)=\pm S_{B}^A$, and thus $f(S_{BB}^A)$ is in the image of the Khovanov differential. Therefore $f^*$ is the zero map, and thus $\ukh^{1,-s_A(D_B)}(D_B)$ is trivial.

Recall that the long exact sequence for $D_B$, $D_A$, and $D$ is 
\[\cdots \ukh^{1,-s_A(D_B)}(D_B)\to\ukh^{2,2-s_A(D)}(D)\to\ukh^{2,2-s_A(D_A)}(D_A)\to\cdots,\]
and that both $\ukh^{1,-s_A(D_B)}(D_B)$ and $\ukh^{2,2-s_A(D_A)}(D_A)$ are trivial. Therefore $\ukh^{2,2-s_A(D)}(D)$ is also trivial.\smallskip

\noindent\textbf{Case 4:} Suppose that $\girth(G_A(D))=2$, $D$ contains exactly four alternating tangles, and both $R_1$ and $R_3$ have a single crossing in the boundary of their respective eastern and western faces. Figure \ref{figure:case4tg1} depicts the diagram $D$. If either of the regions joined by the $A$-resolution are completely contained inside $R_1$, then $D_A$ is  a reduced, $A$-adequate, Turaev genus one diagram, and $\ukh^{2,2-s_A(D_A)}(D_A)$ is trivial by induction. If both of the regions joined by the $A$-resolution intersect the boundary of $R_1$, then $D_A$ is the connected sum of alternating diagrams, and thus $\ukh^{2,2-s_A(D_A)}(D_A)$ is trivial.
\begin{figure}[ht]
\[\begin{tikzpicture}[scale = .6]

\draw (0,0) rectangle (1.5,1.5);
\draw (0,2.5) rectangle (1.5,4);

\draw (5,2) circle (2cm);

\draw (8.5,0) rectangle (10,1.5);
\draw (8.5,2.5) rectangle (10,4);

\draw (13.5,2) circle (2cm);

\draw (.75,.75) node{$R_1^2$};
\draw (.75,3.25) node{$R_1^1$};

\draw (.25,.25) node{\tiny{$+$}};
\draw (1.25,.25) node{\tiny{$-$}};
\draw (.25,1.25) node{\tiny{$-$}};
\draw (1.25,1.25) node{\tiny{$+$}};

\draw (.25,2.75) node{\tiny{$+$}};
\draw (1.25,2.75) node{\tiny{$-$}};
\draw (.25,3.75) node{\tiny{$-$}};
\draw (1.25,3.75) node{\tiny{$+$}};

\begin{scope}[xshift = 8.5cm]
\draw (.25,.25) node{\tiny{$+$}};
\draw (1.25,.25) node{\tiny{$-$}};
\draw (.25,1.25) node{\tiny{$-$}};
\draw (1.25,1.25) node{\tiny{$+$}};

\draw (.25,2.75) node{\tiny{$+$}};
\draw (1.25,2.75) node{\tiny{$-$}};
\draw (.25,3.75) node{\tiny{$-$}};
\draw (1.25,3.75) node{\tiny{$+$}};

\draw (3.8,3.15) node{$+$};
\draw (3.8, .85)node{$-$};
\draw (6.2,3.15) node{$-$};
\draw (6.2,.85) node{$+$};
\end{scope}

\draw (5,2) node{$R_2$};

\draw (3.8,3.15) node{$+$};
\draw (3.8, .85)node{$-$};
\draw (6.2,3.15) node{$-$};
\draw (6.2,.85) node{$+$};

\draw (3.586,3.414) to [out = 150, in = 0] (1.5,3.75);
\draw (3.586,.586) to [out = 210, in = 0] (1.5,.25);
\draw (6.414,3.414) to [out = 30, in = 180] (8.5,3.75);
\draw (6.414,.586) to [out = -30, in = 180] (8.5,.25);

\begin{scope}[xshift = 8.5cm]
\draw (3.586,3.414) to [out = 150, in = 0] (1.5,3.75);
\draw (3.586,.586) to [out = 210, in = 0] (1.5,.25);
\end{scope}

\draw (9.25,.75) node{$R_3^2$};
\draw (9.25,3.25) node{$R_3^1$};
\draw (13.5,2) node{$R_4$};

\draw (0,.25) arc (90:270:.5cm);
\draw (0,-.75) -- (14.7,-.75);

\draw (0,3.75) arc (270:90:.5cm);
\draw (0,4.75) -- (14.7,4.75);

\draw (14.914, .586) to [out = -45, in = 0, looseness = 1.5] (14.7,-.75);
\draw (14.914,3.414) to [out = 45, in = 0, looseness=1.5] (14.7,4.75);

\begin{knot}[
    consider self intersections,
    clip width = 5,
    ignore endpoint intersections = true,
    end tolerance = 2pt
    ]
    
        \strand (8.75,1.5) to [out = 90, in = 270] (9.75,2.5);
    \strand (8.75,2.5) to [out = 270, in = 90] (9.75,1.5);

    \end{knot}

    \draw (.25,2.5) to [out = 270, in = 90] (.4,2) to [out = 270, in = 90] (.25,1.5);
    \draw (1.25,2.5) to [out = 270, in = 90] (1.1,2) to [out = 270, in = 90] (1.25,1.5);
    \draw (7.5, -1.25) node{$D_A$};

\begin{scope}[yshift=-7cm]
\draw (0,0) rectangle (1.5,1.5);
\draw (0,2.5) rectangle (1.5,4);

\draw (5,2) circle (2cm);

\draw (8.5,0) rectangle (10,1.5);
\draw (8.5,2.5) rectangle (10,4);

\draw (13.5,2) circle (2cm);

\draw (.75,.75) node{$R_1^2$};
\draw (.75,3.25) node{$R_1^1$};

\draw (.25,.25) node{\tiny{$+$}};
\draw (1.25,.25) node{\tiny{$-$}};
\draw (.25,1.25) node{\tiny{$-$}};
\draw (1.25,1.25) node{\tiny{$+$}};

\draw (.25,2.75) node{\tiny{$+$}};
\draw (1.25,2.75) node{\tiny{$-$}};
\draw (.25,3.75) node{\tiny{$-$}};
\draw (1.25,3.75) node{\tiny{$+$}};

\begin{scope}[xshift = 8.5cm]
\draw (.25,.25) node{\tiny{$+$}};
\draw (1.25,.25) node{\tiny{$-$}};
\draw (.25,1.25) node{\tiny{$-$}};
\draw (1.25,1.25) node{\tiny{$+$}};

\draw (.25,2.75) node{\tiny{$+$}};
\draw (1.25,2.75) node{\tiny{$-$}};
\draw (.25,3.75) node{\tiny{$-$}};
\draw (1.25,3.75) node{\tiny{$+$}};

\draw (3.8,3.15) node{$+$};
\draw (3.8, .85)node{$-$};
\draw (6.2,3.15) node{$-$};
\draw (6.2,.85) node{$+$};
\end{scope}

\draw (5,2) node{$R_2$};

\draw (3.8,3.15) node{$+$};
\draw (3.8, .85)node{$-$};
\draw (6.2,3.15) node{$-$};
\draw (6.2,.85) node{$+$};

\draw (3.586,3.414) to [out = 150, in = 0] (1.5,3.75);
\draw (3.586,.586) to [out = 210, in = 0] (1.5,.25);
\draw (6.414,3.414) to [out = 30, in = 180] (8.5,3.75);
\draw (6.414,.586) to [out = -30, in = 180] (8.5,.25);

\begin{scope}[xshift = 8.5cm]
\draw (3.586,3.414) to [out = 150, in = 0] (1.5,3.75);
\draw (3.586,.586) to [out = 210, in = 0] (1.5,.25);
\end{scope}

\draw (9.25,.75) node{$R_3^2$};
\draw (9.25,3.25) node{$R_3^1$};
\draw (13.5,2) node{$R_4$};

\draw (0,.25) arc (90:270:.5cm);
\draw (0,-.75) -- (14.7,-.75);

\draw (0,3.75) arc (270:90:.5cm);
\draw (0,4.75) -- (14.7,4.75);

\draw (14.914, .586) to [out = -45, in = 0, looseness = 1.5] (14.7,-.75);
\draw (14.914,3.414) to [out = 45, in = 0, looseness=1.5] (14.7,4.75);

\begin{knot}[
    consider self intersections,
    clip width = 5,
    ignore endpoint intersections = true,
    end tolerance = 2pt
    ]
    
        \strand (8.75,1.5) to [out = 90, in = 270] (9.75,2.5);
    \strand (8.75,2.5) to [out = 270, in = 90] (9.75,1.5);

    \end{knot}

    \draw (.25,2.5) to [out = 270, in = 270, looseness=1.3] (1.25,2.5);
    \draw (.25,1.5) to [out = 90, in = 90, looseness=1.3] (1.25,1.5);
    \draw (7.5, -1.25) node{$D_B$};
\end{scope}    

\end{tikzpicture}\]
    \caption{Diagrams of $D_A$ and $D_B$ in case $4$.}
    \label{figure:case4tg1}
\end{figure}
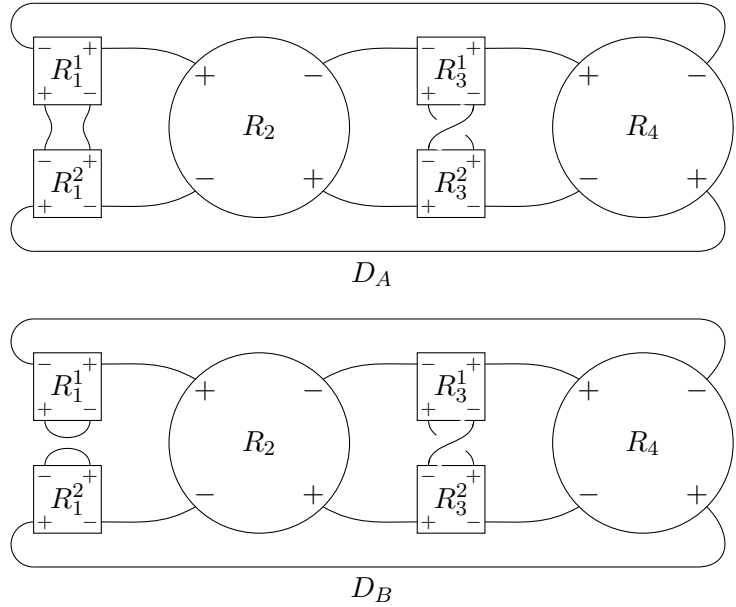

Let $D_{BA}$ and $D_{BB}$ be the diagrams obtained by resolving the pictured crossing of $D$ in $R_3$, as in Figure \ref{figure:case4tg12}. One can repeat the argument in case 3 verbatim to conclude that $\ukh^{1,-s_A(D_B)}(D_B)$ is trivial. The long exact sequence for $D_B$, $D_A$, and $D$ is 
\[\cdots \ukh^{1,-s_A(D_B)}(D_B)\to\ukh^{2,2-s_A(D)}(D)\to\ukh^{2,2-s_A(D_A)}(D_A)\to\cdots.\]
Since both $\ukh^{1,-s_A(D_B)}(D_B)$ and $\ukh^{2,2-s_A(D_A)}(D_A)$ are trivial, it follows that $\ukh^{2,2-s_A(D)}(D)$ is also trivial.

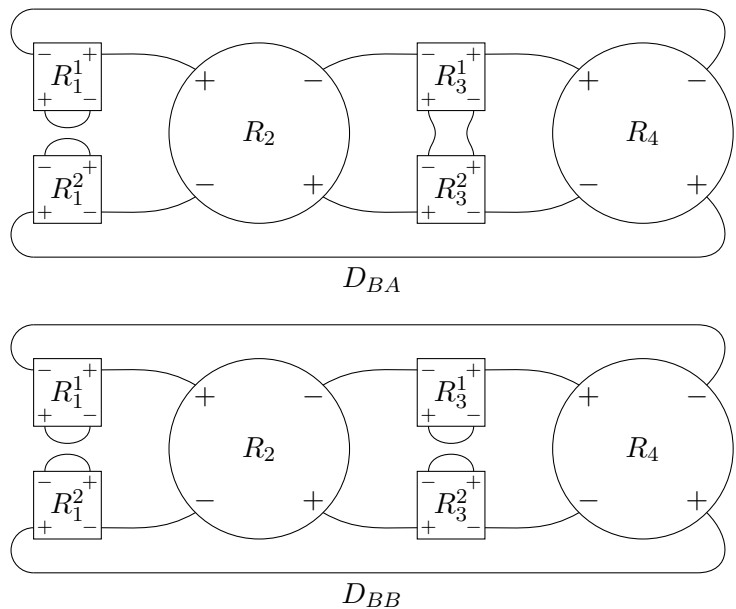
\begin{figure}[ht]
\[\begin{tikzpicture}[scale=.6]
\draw (0,0) rectangle (1.5,1.5);
\draw (0,2.5) rectangle (1.5,4);

\draw (5,2) circle (2cm);

\draw (8.5,0) rectangle (10,1.5);
\draw (8.5,2.5) rectangle (10,4);

\draw (13.5,2) circle (2cm);

\draw (.75,.75) node{$R_1^2$};
\draw (.75,3.25) node{$R_1^1$};

\draw (.25,.25) node{\tiny{$+$}};
\draw (1.25,.25) node{\tiny{$-$}};
\draw (.25,1.25) node{\tiny{$-$}};
\draw (1.25,1.25) node{\tiny{$+$}};

\draw (.25,2.75) node{\tiny{$+$}};
\draw (1.25,2.75) node{\tiny{$-$}};
\draw (.25,3.75) node{\tiny{$-$}};
\draw (1.25,3.75) node{\tiny{$+$}};

\begin{scope}[xshift = 8.5cm]
\draw (.25,.25) node{\tiny{$+$}};
\draw (1.25,.25) node{\tiny{$-$}};
\draw (.25,1.25) node{\tiny{$-$}};
\draw (1.25,1.25) node{\tiny{$+$}};

\draw (.25,2.75) node{\tiny{$+$}};
\draw (1.25,2.75) node{\tiny{$-$}};
\draw (.25,3.75) node{\tiny{$-$}};
\draw (1.25,3.75) node{\tiny{$+$}};

\draw (3.8,3.15) node{$+$};
\draw (3.8, .85)node{$-$};
\draw (6.2,3.15) node{$-$};
\draw (6.2,.85) node{$+$};
\end{scope}

\draw (5,2) node{$R_2$};

\draw (3.8,3.15) node{$+$};
\draw (3.8, .85)node{$-$};
\draw (6.2,3.15) node{$-$};
\draw (6.2,.85) node{$+$};

\draw (3.586,3.414) to [out = 150, in = 0] (1.5,3.75);
\draw (3.586,.586) to [out = 210, in = 0] (1.5,.25);
\draw (6.414,3.414) to [out = 30, in = 180] (8.5,3.75);
\draw (6.414,.586) to [out = -30, in = 180] (8.5,.25);

\begin{scope}[xshift = 8.5cm]
\draw (3.586,3.414) to [out = 150, in = 0] (1.5,3.75);
\draw (3.586,.586) to [out = 210, in = 0] (1.5,.25);
\end{scope}

\draw (9.25,.75) node{$R_3^2$};
\draw (9.25,3.25) node{$R_3^1$};
\draw (13.5,2) node{$R_4$};

\draw (0,.25) arc (90:270:.5cm);
\draw (0,-.75) -- (14.7,-.75);

\draw (0,3.75) arc (270:90:.5cm);
\draw (0,4.75) -- (14.7,4.75);

\draw (14.914, .586) to [out = -45, in = 0, looseness = 1.5] (14.7,-.75);
\draw (14.914,3.414) to [out = 45, in = 0, looseness=1.5] (14.7,4.75);

    \draw (8.75,2.5) to [out = 270, in =90] (8.9,2) to [out=270, in=90] (8.75,1.5);
    \draw (9.75,2.5) to [out = 270, in = 90] (9.6,2) to [out = 270, in=90] (9.75,1.5);

    \draw (.25,2.5) to [out = 270, in = 270, looseness=1.3] (1.25,2.5);
    \draw (.25,1.5) to [out = 90, in = 90, looseness=1.3] (1.25,1.5);
    \draw (7.5, -1.25) node{$D_{BA}$};

\begin{scope}[yshift = -7cm]
\draw (0,0) rectangle (1.5,1.5);
\draw (0,2.5) rectangle (1.5,4);

\draw (5,2) circle (2cm);

\draw (8.5,0) rectangle (10,1.5);
\draw (8.5,2.5) rectangle (10,4);

\draw (13.5,2) circle (2cm);

\draw (.75,.75) node{$R_1^2$};
\draw (.75,3.25) node{$R_1^1$};

\draw (.25,.25) node{\tiny{$+$}};
\draw (1.25,.25) node{\tiny{$-$}};
\draw (.25,1.25) node{\tiny{$-$}};
\draw (1.25,1.25) node{\tiny{$+$}};

\draw (.25,2.75) node{\tiny{$+$}};
\draw (1.25,2.75) node{\tiny{$-$}};
\draw (.25,3.75) node{\tiny{$-$}};
\draw (1.25,3.75) node{\tiny{$+$}};

\begin{scope}[xshift = 8.5cm]
\draw (.25,.25) node{\tiny{$+$}};
\draw (1.25,.25) node{\tiny{$-$}};
\draw (.25,1.25) node{\tiny{$-$}};
\draw (1.25,1.25) node{\tiny{$+$}};

\draw (.25,2.75) node{\tiny{$+$}};
\draw (1.25,2.75) node{\tiny{$-$}};
\draw (.25,3.75) node{\tiny{$-$}};
\draw (1.25,3.75) node{\tiny{$+$}};

\draw (3.8,3.15) node{$+$};
\draw (3.8, .85)node{$-$};
\draw (6.2,3.15) node{$-$};
\draw (6.2,.85) node{$+$};
\end{scope}

\draw (5,2) node{$R_2$};

\draw (3.8,3.15) node{$+$};
\draw (3.8, .85)node{$-$};
\draw (6.2,3.15) node{$-$};
\draw (6.2,.85) node{$+$};

\draw (3.586,3.414) to [out = 150, in = 0] (1.5,3.75);
\draw (3.586,.586) to [out = 210, in = 0] (1.5,.25);
\draw (6.414,3.414) to [out = 30, in = 180] (8.5,3.75);
\draw (6.414,.586) to [out = -30, in = 180] (8.5,.25);

\begin{scope}[xshift = 8.5cm]
\draw (3.586,3.414) to [out = 150, in = 0] (1.5,3.75);
\draw (3.586,.586) to [out = 210, in = 0] (1.5,.25);
\end{scope}

\draw (9.25,.75) node{$R_3^2$};
\draw (9.25,3.25) node{$R_3^1$};
\draw (13.5,2) node{$R_4$};

\draw (0,.25) arc (90:270:.5cm);
\draw (0,-.75) -- (14.7,-.75);

\draw (0,3.75) arc (270:90:.5cm);
\draw (0,4.75) -- (14.7,4.75);

\draw (14.914, .586) to [out = -45, in = 0, looseness = 1.5] (14.7,-.75);
\draw (14.914,3.414) to [out = 45, in = 0, looseness=1.5] (14.7,4.75);

    \draw (8.75,2.5) to [out = 270, in = 270, looseness=1.3] (9.75,2.5);
    \draw (8.75,1.5) to [out = 90, in = 90, looseness=1.3] (9.75,1.5);

    \draw (.25,2.5) to [out = 270, in = 270, looseness=1.3] (1.25,2.5);
    \draw (.25,1.5) to [out = 90, in = 90, looseness=1.3] (1.25,1.5);
    \draw (7.5, -1.25) node{$D_{BB}$};
\end{scope}

\end{tikzpicture}\]
    \caption{Diagrams of $D_{BA}$ and $D_{BB}$ in case $4$.}
    \label{figure:case4tg12}
\end{figure}

\end{proof}

Theorem \ref{theorem:almostalternating} and \ref{theorem:tg1} combine to prove Theorem \ref{theorem:main}.
\begin{proof}[Proof of Theorem \ref{theorem:main}]
Let $D$ be an $A$-Turaev genus one diagram of $L$. Then $D$ is either $A$-almost alternating or $D$ is $A$-adequate and has Turaev genus one. In the former case, Theorem \ref{theorem:almostalternating} implies the result, and in the latter case Theorem \ref{theorem:tg1} implies the result. 

If $D$ is a $B$-Turaev genus one diagram of $L$, then the mirror $\overline{D}$ is $A$-Turaev genus one, and Equation \ref{equation:Mirror} implies the result.
\end{proof}

\begin{example}
Let $K$ be the $14$-crossing knot depicted in Figure \ref{figure:14ex}. The Khovanov homology of $K$ is computed in Table \ref{table:14ex}. The Khovanov homology of $K$ in its maximum polynomial grading is $Kh^{*,j_{\max}(K)}(K) \cong Kh^{8,19}(K)\cong \mathbb{Z}^2$. The Khovanov homology of $K$ in its minimum polynomial grading is $Kh^{*,j_{\min}(K)}(K) \cong Kh^{-2,-1}(K) \cong \mathbb{Z}$; also $Kh^{i_{\min}(K)+2,j_{\min}(K)+2}(K) \cong Kh^{0,1}(K) \cong \mathbb{Z}$, which is nontrivial. Therefore Theorem \ref{theorem:main} implies that $K$ has Turaev genus at least two. No previous result would obstruct this knot from having Turaev genus one.
\end{example}
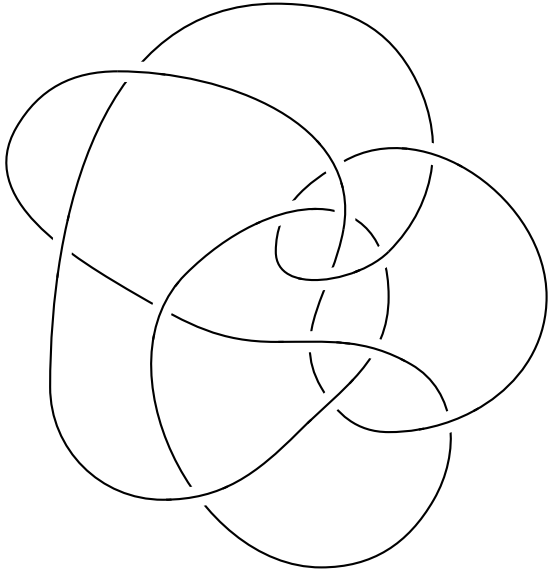
\begin{figure}[h]
\[\begin{tikzpicture}[scale = .3]
\begin{knot}[
    consider self intersections,
    clip width = 10,
    ignore endpoint intersections = true,
    end tolerance = 2pt
    ]
\flipcrossings{1,4,5,9,10,12}
\strand[thick] (16,1) to [out = 0, in = 240]
(21,4) to [out = 60, in = -30]
(20,10) to [out = 150, in = 0]
(14,11) to [out = 180, in = -30]
(8,13) to [out = 150, in = 240]
(2.5,20.5) to [out = 60, in = 180]
(7,23) to [out = 0, in =80]
(17,16) to [out = 260, in = 120]
(16,9) to [out = 300, in = 180]
(19,7) to [out = 0, in = 270]
(26,13) to [out = 90, in = 30]
(17,19) to [out = 210, in = 90]
(14,15) to [out = 270, in = 225]
(19,15) to [out = 45, in = -60]
(20,23) to [out = 120, in = 0]
(14,26) to [out = 180, in = 90]
(4,9) to [out = 270, in = 180]
(9,4) to [out = 0, in = 225]
(15,7) to [out = 45, in = 270]
(19,13) to [out = 90, in = 45, looseness=1.5]
(10,14) to [out = 225, in=180]
(16,1);
\end{knot}

\end{tikzpicture}\]
\caption{Theorem \ref{theorem:main} implies this knot has Turaev genus at least two.}
\label{figure:14ex}
\end{figure}

\begin{table}[h]
\begin{tabular}{| r || c | c | c | c | c | c | c | c | c | c | c |}
\hline
$j \backslash i$& -2 & -1 & 0 & 1 & 2 & 3 & 4 & 5 & 6 & 7 & 8\\
\hline
\hline
19 & & & & & & & & & & & 2\\
\hline
17  & & & & & & & & & & 1 & $2_2$ \\
\hline
15  & & & & & & & & & 2 & $2, 1_2$& \\
\hline
13  & & & & & & & 1 & 3 & $1,2_2$& & \\
\hline
11  & & & & & & & $1,1_2$ & $2,3_2$ & & & \\
\hline
9  & & & & & 1 & 4 & $3,1_2$ &  & & & \\
\hline
7  & & & & 1 & $1,1_2$ & $1,3_2$ & &  & & & \\
\hline
5  & & & & $1,1_2$ & $3,1_2$ &  & &  & & & \\
\hline
3  & & 1 & 2 & 1 &  &  & &  & & & \\
\hline
1  & &$1_2$ &\cellcolor{red} 1 &  &  &  & &  & & & \\
\hline
-1  & 1  & & &  &  &  & &  & & & \\
\hline

\end{tabular}
\caption{The Khovanov homology of the 14-crossing knot in Figure \ref{figure:14ex}. An entry of $k$ indicates a $\mathbb{Z}^k$ summand, and an entry of $k_2$ indicates a $\mathbb{Z}_2^k$ summand.}
\label{table:14ex}
\end{table}

\section{Rasmussen's invariant and $4$-genus}
\label{section:genus}

In this section, we prove Theorems \ref{theorem:ras} and \ref{theorem:genus}. Theorem \ref{theorem:ras} gives formulas the Rasmussen $s$ invariant of some Turaev genus one knots, and Theorem \ref{theorem:ras} gives bounds on the smooth $4$-genus of the same Turaev genus one knots. 

Let $D$ be an $A$-Turaev genus one diagram of a knot $K$. Theorem \ref{theorem:main} implies that one summand of Khovanov homology in the third nontrivial homological grading is trivial. If the third nontrivial homological grading in $Kh(D)$ is $i=0$, then Theorem \ref{theorem:ras} and its proof below show how to compute the Rasmussen $s$ invariant, which gives a lower bound on the smooth $4$-genus $g_4(K)$ of the knot $K$. If $D$ is $A$-almost alternating, then the third nontrivial homological grading is $i=0$ when $D$ has three negative crossings, and if $D$ is an $A$-adequate diagram with Turaev genus one, then the third nontrivial homological grading is $i=0$ when $D$ has two negative crossings.

The Seifert or oriented resolution of an oriented crossing $~ \tikz[baseline=.6ex, scale = .4]{
\draw[->] (0,0) -- (1,1);
\draw[->] (.3,.7) -- (0,1);
\draw (.7,.3) -- (1,0);
}~$ or $~ \tikz[baseline=.6ex, scale = .4]{
\draw[->] (.7,.7) -- (1,1);
\draw[->] (1,0) -- (0,1);
\draw (0,0) -- (.3,.3);
}~$ is given by $~ \tikz[baseline=.6ex, scale = .4]{
\draw[->] (0,0) to [out = 60, in = -60] (0,1);
\draw[->] (1,0) to [out = 120, in = 240] (1,1);
}.$ The state obtained by taking the Seifert resolution at every crossing is the Seifert state. Let $f(D)$ denote the number of components in the Seifert state of the link diagram $D$. A link is positive if it has a diagram where every crossing is positive. Nakamura \cite{Nakamura} proved the following theorem about the $4$-genus of positive links.
\begin{theorem}[Nakamura]
\label{theorem:Nakamura}
Let $D$ be a diagram of $L$ such that every crossing in $D$ is positive. The four-genus of $L$ is given by
\[g_4(L) = \frac{2-\mu(L) - f(D) +c(D)}{2}\]
where $\mu(L)$ is the number of components of $L$, $f(D)$ is the number of Seifert circles of $D$, and $c(D)$ is the number of crossings of $D$.
\end{theorem}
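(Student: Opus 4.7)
The plan is to establish matching upper and lower bounds on $g_4(L)$ using quite different techniques for each.

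\emph{Upper bound.} Apply Seifert's algorithm to the positive diagram $D$ to produce an oriented surface $\Sigma_D \subset S^3$ bounding $L$, built from $f(D)$ Seifert disks joined by $c(D)$ twisted bands. Each band has Euler characteristic $-1$, so $\chi(\Sigma_D) = f(D) - c(D)$, and the standard relation $\chi(\Sigma_D) = 2 - 2g(\Sigma_D) - \mu(L)$ gives
\[g(\Sigma_D) = \frac{2 - \mu(L) - f(D) + c(D)}{2}.\]
Pushing $\Sigma_D$ slightly into $B^4$ realizes $L$ as the boundary of a smooth surface of this genus, so $g_4(L) \leq g(\Sigma_D)$, which is one direction of the stated equality.

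\emph{Lower bound.} This is the real work, and the main obstacle, because it requires the positivity hypothesis in an essential way. My preferred route is via Rasmussen's $s$-invariant. For a positive knot diagram, a direct analysis of Lee's deformation of Khovanov homology shows that the canonical generator corresponding to the Seifert orientation lies in homological grading $0$, and that its polynomial grading forces $s(K) = c(D) - f(D) + 1$. Combined with Rasmussen's inequality $|s(K)|/2 \leq g_4(K)$, this yields the matching lower bound when $\mu(L) = 1$. For links with $\mu(L) \geq 2$, I would extend via the Beliakova--Wehrli $s$-invariant for links, or alternatively invoke the Kronheimer--Mrowka adjunction inequality inside a Stein filling of $S^3$ in which $\Sigma_D$ appears as a piece of a holomorphic curve (Rudolph's quasipositive framework, which encompasses positive links).

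\emph{Where the difficulty lies.} The lower bound is not combinatorial: it imports either gauge-theoretic (Kronheimer--Mrowka, Rudolph) or Khovanov-homological (Rasmussen, Lee) input to obstruct slice surfaces more efficient than $\Sigma_D$. Positivity of every crossing plays the decisive role precisely in making such input applicable --- either by pinning the Lee generator to the extremal polynomial grading, or by upgrading $\Sigma_D$ to a quasipositive surface to which the adjunction inequality applies. Once that single technical ingredient is available, the two bounds fit together exactly and the formula follows.
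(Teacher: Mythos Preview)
The paper does not prove this theorem: it is quoted as an external result of Nakamura and invoked as a black box in the proof of Theorem~\ref{theorem:genus}. So there is no ``paper's own proof'' to compare against.

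Your outline is a correct sketch of how such a result is established. The upper bound via Seifert's algorithm is routine and exactly as you say. For the lower bound, your two suggested routes are both valid, but note that Nakamura's paper (2000) predates Rasmussen's $s$-invariant, so his original argument necessarily goes through the gauge-theoretic side: positive links are quasipositive (Rudolph), and the slice-Bennequin inequality coming from Kronheimer--Mrowka then forces the Seifert surface from a positive diagram to realize $g_4$. Your Rasmussen/Lee alternative is a later, independent confirmation in the knot case, and the Beliakova--Wehrli extension handles links as you indicate. Either route closes the argument; there is no gap in your proposal.
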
 
Rasmussen \cite{Rasmussen:Slice} proved that if $K$ is positive, then $s(K) = 2g_4(K)$. Tagami \cite{Tagami} proved that if $K$ is almost-positive, that is, if $K$ has a diagram with one negative crossing, then $s(K) = 2g_4(K)$. Theorem \ref{theorem:genus} can be seen as an extension of this work in the case that the knot has Turaev genus one.

The following result is a well-known consequence of Rasmussen's construction of $s(K)$; see, for instance, Proposition 2.2 in Tagami \cite{Tagami}.
\begin{theorem}[Rasmussen]
\label{theorem:rasKh}
For any knot $K$, the Khovanov homology groups $Kh^{0,s(K)\pm 1}(K;\mathbb{Q})$ are nontrivial.
\end{theorem}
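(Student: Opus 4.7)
The plan is to prove Theorem \ref{theorem:rasKh} using Lee's deformation of Khovanov homology together with the convergence of the induced spectral sequence, which is the framework Rasmussen used to define $s(K)$. First I would recall Lee's construction: by modifying the Frobenius algebra structure that underlies Khovanov's chain complex, one obtains a new differential $d_{\text{Lee}}$ on $CKh(K;\mathbb{Q})$ whose homology $H_{\text{Lee}}(K;\mathbb{Q})$ is a link invariant. The differential $d_{\text{Lee}}$ no longer preserves the polynomial grading; rather, it strictly increases the polynomial grading, so the polynomial grading descends only to a filtration on $H_{\text{Lee}}(K;\mathbb{Q})$. Lee showed that for a knot $K$, this homology is two-dimensional over $\mathbb{Q}$, concentrated in homological grading $0$, and generated by two canonical classes $\mathfrak{s}_o$ and $\mathfrak{s}_{\bar o}$ corresponding to the two orientations of $K$.

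Next I would recall Rasmussen's definition of $s(K)$. Writing $\mathcal{F}^k H_{\text{Lee}}(K;\mathbb{Q})$ for the subspace of elements representable by chains of polynomial grading $\geq k$, the $s$-invariant is characterized by the property that the classes $\mathfrak{s}_o + \mathfrak{s}_{\bar o}$ and $\mathfrak{s}_o - \mathfrak{s}_{\bar o}$ have filtration levels $s(K)-1$ and $s(K)+1$ respectively. Consequently, the associated graded $\operatorname{gr}^{\mathcal F} H_{\text{Lee}}(K;\mathbb{Q})$ is two-dimensional and supported precisely in the two bigradings $(0, s(K)-1)$ and $(0, s(K)+1)$, with one one-dimensional summand in each.

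For the final step I would invoke the filtered spectral sequence induced by $\mathcal{F}$ on $CKh(K;\mathbb{Q})$, whose $E_2$ page is $Kh(K;\mathbb{Q})$ (with $d^{i,j}$ on the Khovanov chain complex appearing as the first page differential), and which converges to $H_{\text{Lee}}(K;\mathbb{Q})$ with $E_\infty \cong \operatorname{gr}^{\mathcal F} H_{\text{Lee}}(K;\mathbb{Q})$ as bigraded $\mathbb{Q}$-vector spaces. The homological grading is preserved by every page differential, and $E_\infty^{0,\, s(K)\pm 1}$ is nonzero by the previous paragraph. Since each $E_r^{0,j}$ arises as a sub-quotient of $E_{r-1}^{0,j}$, and hence of $E_2^{0,j} = Kh^{0,j}(K;\mathbb{Q})$, the nontriviality of $E_\infty^{0, s(K)\pm 1}$ forces $Kh^{0, s(K)\pm 1}(K;\mathbb{Q}) \neq 0$.

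The main obstacle is not any single computation but rather a careful bookkeeping of the conventions: one must ensure that the filtration grading conventions for $\mathcal{F}$ and the bigrading conventions on Khovanov homology match so that the canonical generators land in the right bigradings after passing to $E_\infty$. Once the conventions are aligned, Lee's dimension count and Rasmussen's characterization of $s(K)$ via the filtration levels of $\mathfrak{s}_o \pm \mathfrak{s}_{\bar o}$ supply the statement without further work.
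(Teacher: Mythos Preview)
Your sketch is correct and follows the standard Lee--Rasmussen argument. Note, however, that the paper does not actually prove this statement: it simply records it as a well-known consequence of Rasmussen's construction and cites Tagami (Proposition~2.2) for a reference. Your outline is precisely the argument one finds in those sources, so there is nothing to compare beyond observing that you have supplied the proof the paper chose to omit.
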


Theorems \ref{theorem:main} and \ref{theorem:rasKh} combine to give us the proof of Theorem \ref{theorem:ras}.
\begin{proof}[Proof of Theorem \ref{theorem:ras}]
Let $D$ be an $A$-adequate diagram such that $g_T(D)=1$ and $c_-(D)=2$. Theorem \ref{theorem:main} implies that $\ukh^{2,2-s_A(D)}(D)\cong Kh^{0,c(D)-s_A(D)-4}(D)$ is trivial. Then Inequality \ref{equation:diagonal} implies that $Kh^{0,j}(D)$ is nontrivial only when $j=c(D)-s_A(D)-2$ or $c(D)-s_A(D)$. Hence Theorem \ref{theorem:rasKh} implies that the Rasmussen $s$-invariant of $K$ is given by $s(K)=c(D)-s_A(D)-1$.

Now suppose that $D$ is an $A$-almost alternating diagram such that $c_-(D)=3$. Theorem \ref{theorem:main} implies that $\ukh^{3,4-s_A(D)}(D) \cong Kh^{0,c(D)-s_A(D)-5}(D)$ is trivial. Then Inequality \ref{equation:diagonal} implies that $Kh^{0,j}(D)$ is nontrivial only when $j=c(D)-s_A(D)-3$ or $c(D)-s_A(D)-1$. Finally Theorem \ref{theorem:rasKh} implies that $s(K) = c(D)-s_A(D)-2$.

The proofs in the cases where $D$ is $B$-almost alternating or $B$-adequate are similar.
\end{proof}

Theorem \ref{theorem:ras} gives the lower bound $\frac{|s(K)|}{2}$ for $g_4(K)$, but it remains to show that with the assumptions of Theorem \ref{theorem:genus}, $\frac{|s(K)|}{2}+1$ is an upper bound for $g_4(K)$. We begin by analyzing properties of diagrams of Turaev genus one with a small number of negative crossings.

Let $D$ be an oriented link diagram, and let $\Gamma=\Gamma(D)$ be the $4$-valent graph obtained by considering the crossings of $D$ as vertices in $\Gamma$ and the arcs of $D$ going between the crossings as the edges of $\Gamma$. The orientation of $D$ induces a direction on each edge of $\Gamma$ so that $\Gamma$ becomes a directed graph. If $D$ is $A$-adequate, the all-$A$ Kauffman state induces a cycle decomposition of $\Gamma$. A vertex of $\Gamma$ can be labeled as positive or negative according to whether the corresponding crossing in $D$ is positive or negative. 
See Figure \ref{figure:cycledecomp} for an example of this construction.
\begin{figure}[h]
\[\begin{tikzpicture}[scale = .5, thick]

\begin{knot}[
	consider self intersections,
 	clip width = 4,
 	ignore endpoint intersections = true,
	end tolerance = 1pt
 ]
 \flipcrossings{9,1,4,6,7}
 \strand (8,0) to [out = 0, in = -60] 
 (9,5) to [out = 120, in =-30]
 (5,12) to [out = 150, in = 60]
 (1,11) to [out = 240, in = 210]
 (3,7) to [out = 30, in = 225]
 (5,14) to [out = 45, in = 100]
 (13,12) to [out = -80, in=0]
 (11.5,8) to [out = 180, in = 0]
 (5,10) to [out = 180, in = 120, looseness=1.5]
 (1,4) to [out = -60, in = 225]
 (12,5) to [out = 45, in =270]
 (14,9) to [out = 90, in = 80]
 (6,4) to [out = 260, in= 180]
 (8,0);
 
 \end{knot}
 \draw (7.5,.5) -- (8,0) -- (7.5,-.5);
 
 \draw (8,-2) node{$D$};
 
 \begin{scope}[xshift= 16cm]
 
 \begin{knot}[
	consider self intersections,
 	clip width = 4,
 	ignore endpoint intersections = true,
	end tolerance = 1pt
 ]
 \flipcrossings{9,1,4,6,7}
 \strand (8,0) to [out = 0, in = -60] 
 (9,5) to [out = 120, in =-30]
 (5,12) to [out = 150, in = 60]
 (1,11) to [out = 240, in = 210]
 (3,7) to [out = 30, in = 225]
 (5,14) to [out = 45, in = 100]
 (13,12) to [out = -80, in=0]
 (11.5,8) to [out = 180, in = 0]
 (5,10) to [out = 180, in = 120, looseness=1.5]
 (1,4) to [out = -60, in = 225]
 (12,5) to [out = 45, in =270]
 (14,9) to [out = 90, in = 80]
 (6,4) to [out = 260, in= 180]
 (8,0);
 
 \end{knot}
  \draw (7.5,.5) -- (8,0) -- (7.5,-.5);
  
  \fill[red] (5.7, 1.3) circle (.2cm);
  \fill[red] (10,3) circle (.2cm);
  \fill[red] (13.5,9) circle (.2cm);
  \fill[red] (9.3,8) circle (.2cm);
  \fill[red](8.1,8) circle (.2cm);
 \fill[red] (7,9) circle (.2cm);
 \fill[red] (4.8,9.2) circle (.2cm);
 \fill[red] (1,7.2) circle (.2cm);
 
 \fill[red] (1.2,9.5) circle (.2cm);
 \fill[red] (3.7,10.5) circle (.2cm);
 \fill[red] (3.7, 12) circle (.2cm);
 
 \fill[red] (6.5,2.6) circle (.2cm);
 \fill[red] (9.1,3.4) circle (.2cm);
 \fill[red] (7.9,6.6) circle (.2cm);
 
 \fill[red] (4.7,12.5) circle (.2cm);
 \fill[red] (7.5,10) circle (.2cm);
 \fill[red] (9.3,9.2) circle (.2cm);
 \fill[red] (12.7,10.6) circle (.2cm);
 
  \begin{scope}[very thick,red, decoration={
    markings,
    mark=at position 0.5 with {\arrow{>}}}
    ] 
    \draw[postaction={decorate}] (5.7,1.3) to [out=-80, in =-80, looseness=2] (10,3);
    \draw[postaction={decorate}] (10,3) to [out=20, in =-90, looseness=1] (13.5,9);
     \draw[postaction={decorate}] (13.5,9) to [out=225, in =-30, looseness=1.3] (9.3,8);
     \draw[postaction={decorate}] (9.3,8) to (8.1,8);
     \draw[postaction={decorate}] (8.1,8) to (7,9);
     \draw[postaction={decorate}] (7,9) to [out=170, in =10, looseness=1] (4.8,9.2);
     \draw[postaction={decorate}] (1,7.2) to [out=-60, in =260, looseness=1.5] (4.8,9.2);
      \draw[postaction={decorate}] (1,7.2) to [out=-100, in =190, looseness=1.5] (5.7,1.3);
      
      \draw[postaction={decorate}] (3.7,10.5) to [out=190, in =20, looseness=1] (1.2,9.5);
       \draw[postaction={decorate}] (3.7,12) to [out=190, in =90, looseness=1] (1.2,9.5);
       \draw[postaction={decorate}] (3.7,10.5) -- (3.7,12);
       
       \draw[postaction={decorate}] (6.5,2.6) to [out = 30, in = 210] (9.1,3.4);
       \draw[postaction={decorate}] (9.1,3.4) to [out = 120, in = 300] (7.9,6.6);
       \draw[postaction={decorate}] (7.9,6.6) to [out = 240, in = 90] (6.5,2.6);
       
       \draw[postaction={decorate}] (4.7,12.5) to [out = 60, in = 100, looseness = 1.3] (12.7,10.6);
       \draw[postaction={decorate}] (12.7,10.6) to [out = 180, in = 30, looseness = 1.2] (9.3,9.2);
       \draw[postaction={decorate}] (9.3,9.2) to (7.5,10);
       \draw[postaction={decorate}] (7.5,10) to [out = 120, in = -30, looseness = 1.2] (4.7,12.5);

\end{scope}
 
  \draw (8,-2) node{$\mathcal{C}$};
 
 \end{scope}
 
\end{tikzpicture}\]
\caption{A diagram $D$ of the knot $9_{43}$ and the cycle decomposition $\mathcal{C}$ of $\Gamma(D)$ induced by the all-$A$ state of $D$. The diagram $D$ is $A$-adequate and has two negative crossings; the negative crossings correspond to sinks or sources in $\mathcal{C}$.}
\label{figure:cycledecomp}
\end{figure}
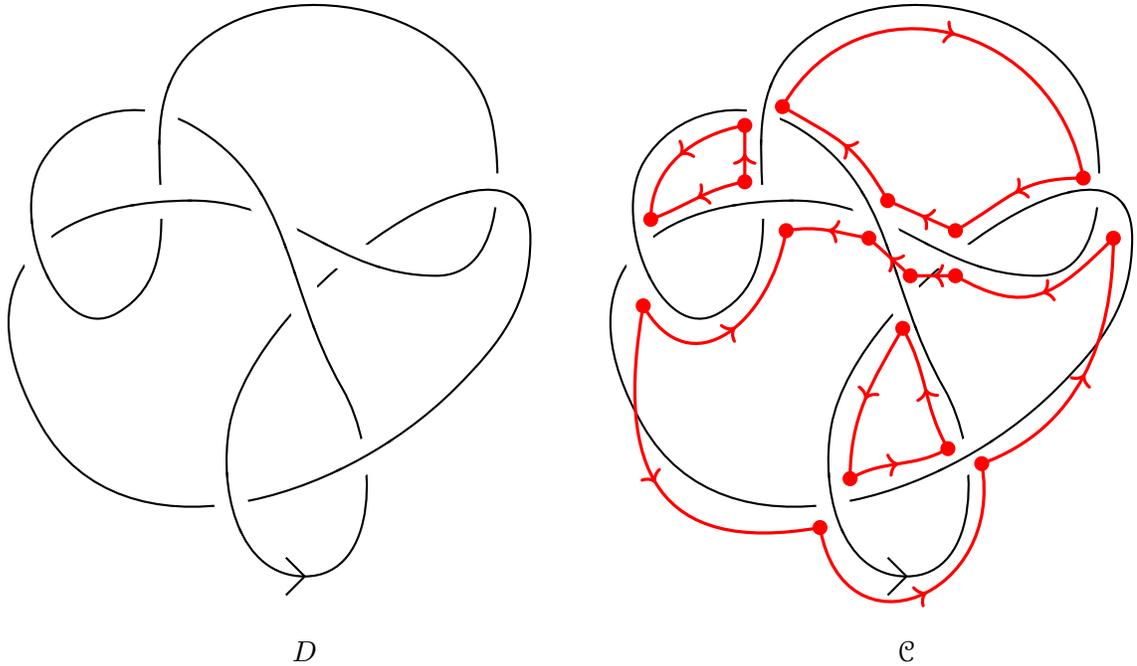

\begin{lemma}
\label{lemma:cycle}
Let $D$ be an oriented $A$-adequate link diagram. Each cycle in the cycle decomposition of $\Gamma(D)$ induced by the all-$A$ state of $D$ contains an even number of negative vertices.
\end{lemma}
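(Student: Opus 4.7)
The plan is to track an orientation-sign $\sigma \in \{\pm 1\}$ along each cycle $C$ in the cycle decomposition and to show that $\sigma$ is preserved as $C$ passes through a positive vertex but reversed at a negative vertex; since $C$ is closed, the total number of reversals must be even, which gives the claim.

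The first step is a local analysis at a single crossing. Label the four edges of $\Gamma(D)$ meeting a crossing by NW, NE, SE, SW according to their compass positions at the vertex. From the paper's pictures of positive and negative crossings, in both cases the SW and SE edges are oriented into the vertex and the NW and NE edges are oriented out of it. The $A$-resolution, however, interacts with this in/out structure in opposite ways at the two types of crossings. At a positive crossing the overstrand runs along the NE-SW line, and the $A$-resolution opens the east and west channels, producing the arc pairs $\{\mathrm{SW},\mathrm{NW}\}$ and $\{\mathrm{SE},\mathrm{NE}\}$; each such pair contains one incoming edge and one outgoing edge at $v$. At a negative crossing the overstrand runs along the NW-SE line, and the $A$-resolution opens the north and south channels, producing the arc pairs $\{\mathrm{NW},\mathrm{NE}\}$ and $\{\mathrm{SW},\mathrm{SE}\}$; each of these pairs consists of two edges of the same in/out type at $v$. (This is consistent with the assertion in the caption of Figure \ref{figure:cycledecomp} that negative crossings appear as sinks or sources in $\mathcal{C}$.)

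Now fix a cycle $C$ of the decomposition. Because $D$ is $A$-adequate, the two $A$-arcs at any crossing lie in distinct components of the all-$A$ state, so $C$ visits each vertex of $\Gamma(D)$ at most once, and at every visited vertex $v$ there is a well-defined entering edge $e_{\mathrm{in}}$ and leaving edge $e_{\mathrm{out}}$. Choose a direction of traversal for $C$, and set $\sigma(e) = +1$ if we traverse $e$ in the direction of $D$'s orientation and $\sigma(e) = -1$ otherwise. At a positive vertex the pair $\{e_{\mathrm{in}},e_{\mathrm{out}}\}$ has one edge pointing into $v$ and one out of $v$, so both are traversed with $D$ or both against $D$; hence $\sigma(e_{\mathrm{in}}) = \sigma(e_{\mathrm{out}})$. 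At a negative vertex both paired edges point simultaneously into $v$ or simultaneously out of $v$, which forces exactly one of them to be traversed against $D$, so $\sigma(e_{\mathrm{in}}) = -\sigma(e_{\mathrm{out}})$.

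Because $C$ is closed, $\sigma$ must return to its initial value after one full traversal, so the number of sign flips around $C$ is even. Each such flip occurs at a negative vertex of $C$, so $C$ contains an even number of negative vertices. The only delicate point is pinning down the local in/out and $A$-pair classification at a single crossing; once that is in hand, the rest is a direct parity count.
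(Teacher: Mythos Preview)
Your proof is correct and follows essentially the same approach as the paper: both arguments identify that, with the edge directions inherited from $D$, a vertex on a cycle $C$ is a sink or source precisely when it is negative, and then use a parity argument to conclude. The paper simply asserts that a directed cycle has equally many sinks and sources, whereas you spell this out explicitly via the sign $\sigma$; your version is a more detailed rendering of the same idea.
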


\begin{proof}
Let $\mathcal{C}$ be the cycle decomposition of $\Gamma(D)$ induced by the all-$A$ state. Figure \ref{figure:negsinksource} shows that a negative crossing becomes two vertices in distinct cycles of $\mathcal{C}$ where one is a source and the other is a sink. A positive crossing becomes two vertices in distinct cycles of $\mathcal{C}$, neither of which is a sink or a source. Because every cycle whose edges are directed has an equal number of sinks and sources, the result follows.
\end{proof}

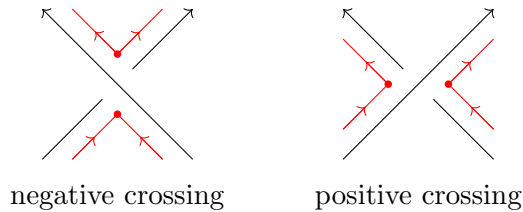
\begin{figure}
\[\begin{tikzpicture}
\draw (0,0) -- (.8,.8);
\draw[->] (1.2,1.2) -- (2,2);
\draw[->] (2,0) -- (0,2);

\fill[red] (1,1.4) circle (0.05cm);
\draw[red] (1,1.4) -- (1.6,2);
\draw[red,->] (1,1.4) -- (1.3,1.7);
\draw[red] (1,1.4) -- (.4,2);
\draw[red,->] (1,1.4) -- (.7,1.7);
\fill[red] (1,.6) circle (0.05cm);
\draw[red] (1.6,0) -- (1,.6);
\draw[red,->] (1.6,0) -- (1.3,.3);
\draw[red] (.4,0) -- (1,.6);
\draw[red,->] (.4,0) -- (.7,.3);

\draw (1,-.5) node{negative crossing};

\begin{scope}[xshift = 4cm]
\draw[->] (0,0) -- (2,2);
\draw (2,0) -- (1.2,.8);
\draw[->] (.8,1.2) -- (0,2);
\draw (1,-.5) node{positive crossing};

\fill[red] (1.4,1) circle (0.05cm);
\fill[red] (.6,1) circle (0.05cm);
\draw[red] (2,.4) -- (1.4,1) -- (2,1.6);
\draw[red] (0,.4) -- (.6,1) -- (0,1.6);
\draw[red,->] (0,.4) -- (.3,.7);
\draw[red,->] (.6,1) -- (.3,1.3);
\draw[red,->] (2,.4) -- (1.7,.7);
\draw[red,->] (1.4,1) -- (1.7,1.3);

\end{scope}

\end{tikzpicture}\]
\caption{A negative crossing results in a sink or source in the cycle decomposition induced by the all-$A$ state while a positive crossing does not.}
\label{figure:negsinksource}
\end{figure}

In order to prove Theorem \ref{theorem:genus}, we use the following lemma describing the structure of one type of knot that satisfies the assumptions of the theorem.

\begin{figure}[h]
\[\begin{tikzpicture}[scale = 1]

\draw (0,0) rectangle (1.5,1.5);
\draw (0,2.5) rectangle (1.5,4);
\draw (2.5,0) rectangle (4,1.5);
\draw (2.5,2.5) rectangle (4,4);

\draw (.75,3.25) node{$R_1$};
\draw (3.25,3.25) node{$R_2$};
\draw (3.25,.75) node{$R_3$};
\draw (.75,.75) node{$R_4$};

\draw (.4,1.25) node{\tiny{$-$}};
\draw (1.1,1.25) node{\tiny{$+$}};
\draw (1.25,1.1) node{\tiny{$-$}};
\draw (1.25,.4) node{\tiny{$+$}};

\draw (.4,2.75) node{\tiny{$+$}};
\draw (1.1,2.75) node{\tiny{$-$}};
\draw (1.25,2.9) node{\tiny{$+$}};
\draw (1.25,3.6) node{\tiny{$-$}};

\draw (2.9,1.25) node{\tiny{$-$}};
\draw (3.6,1.25) node{\tiny{$+$}};
\draw (2.75,1.1) node{\tiny{$+$}};
\draw (2.75,.4) node{\tiny{$-$}};

\draw (2.9,2.75) node{\tiny{$+$}};
\draw (3.6,2.75) node{\tiny{$-$}};
\draw (2.75,2.9) node{\tiny{$-$}};
\draw (2.75,3.6) node{\tiny{$+$}};

\begin{knot}[    
    consider self intersections,
    clip width = 5,
    ignore endpoint intersections = true,
    end tolerance = 1pt
    ]

\strand[->]
    (1.1,2.5) to [out = 270, in =90 ] (.4,1.5);
\strand[->]
    (1.1,1.5) to [out = 90, in =270 ] (.4,2.5);    

\strand[->] (2.5,3.6) to [out = 180, in =0] (1.5,2.9);
\strand[->] (1.5,3.6) to [out = 0, in= 180] (2.5,2.9);

\strand[->] (2.9,1.5) to [out = 90, in =270] (3.6,2.5);
\strand [->] (2.9,2.5) to [out = 270, in =90] (3.6,1.5);

\strand[->] (2.5,.4) to [out = 180, in=0] (1.5,1.1);
\strand[->] (1.5,.4) to [out = 0, in = 180] (2.5,1.1);

\end{knot}

\draw[red, dashed] (2,3.25) circle (.3cm);

\begin{scope}[xshift = 5cm]

\draw (0,0) rectangle (6,1.5);
\draw (0,2.5) rectangle (6,4);

\draw (3,.75) node{$R_2$};
\draw (3,3.25) node{$R_1$};

\draw (.25,1.25) node{\tiny{$-$}};
\draw (1.25,1.25) node{\tiny{$+$}};
\draw (1.75,1.25) node{\tiny{$-$}};
\draw (2.75,1.25) node{\tiny{$+$}};
\draw (3.25,1.25) node{\tiny{$-$}};
\draw (4.25,1.25) node{\tiny{$+$}};
\draw (4.75,1.25) node{\tiny{$-$}};
\draw (5.75,1.25) node{\tiny{$+$}};

\draw (.25,2.75) node{\tiny{$+$}};
\draw (1.25,2.75) node{\tiny{$-$}};
\draw (1.75,2.75) node{\tiny{$+$}};
\draw (2.75,2.75) node{\tiny{$-$}};
\draw (3.25,2.75) node{\tiny{$+$}};
\draw (4.25,2.75) node{\tiny{$-$}};
\draw (4.75,2.75) node{\tiny{$+$}};
\draw (5.75,2.75) node{\tiny{$-$}};

\begin{knot}[    
    consider self intersections,
    clip width = 5,
    ignore endpoint intersections = true,
    end tolerance = 1pt
    ]
    \strand[->] (.25,2.5) to [out = 270,in=90] (1.25,1.5);
    \strand[->] (.25,1.5) to [out = 90, in=270] (1.25,2.5);
    
    \strand[->] (1.75,1.5) to [out = 90, in = 270] (2.75,2.5);
    \strand[->] (1.75, 2.5) to [out = 270, in =90] (2.75,1.5);
    
    \strand[->] (3.25,1.5) to [out = 90, in = 270] (4.25,2.5);
    \strand[->] (3.25, 2.5) to [out = 270, in =90] (4.25,1.5);
    
    \strand[->] (4.75,1.5) to [out = 90, in = 270] (5.75,2.5);
    \strand[->] (4.75, 2.5) to [out = 270, in =90] (5.75,1.5);
    
    \end{knot}
    
    \draw[red, dashed] (.75,2) circle (.3cm);

\end{scope}

\end{tikzpicture}\]
\caption{The two possible formats for an $A$-almost alternating diagram with positive dealternator and three negative crossings. Each tangle $R_i$ is a positive alternating tangle. In each case, the dealternator is circled.}
\label{figure:aa3}
\end{figure}
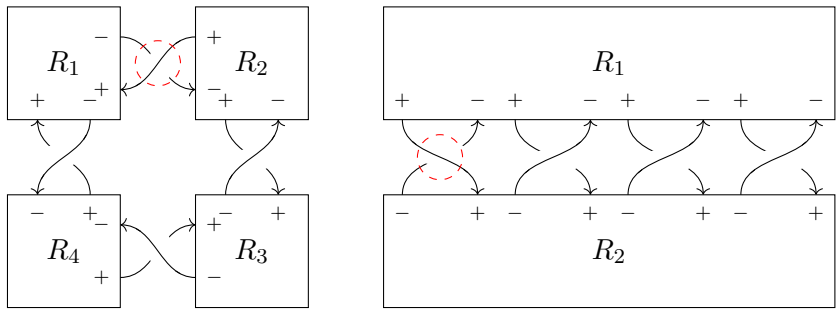

\begin{lemma}
\label{lemma:aa3pos}
Let $D$ be an $A$-almost alternating diagram such that $c_-(D)=3$ and the dealternator is a positive crossing. Then $D$ is one of the two diagrams depicted in Figure \ref{figure:aa3} where each tangle $R_i$ is an alternating tangle with all positive crossings. 
\end{lemma}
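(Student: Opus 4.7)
The plan is to analyze the local structure of $D$ near the dealternator $x$ by tracking orientations and the alternating over/under pattern through the alternating tangle $R$. First I would fix an orientation on $D$ so that $x$ is a positive crossing; this pins down the orientations of the four strands meeting $x$. Each of these strands enters $R$, and since $D$ is almost alternating, the over/under pattern at the first crossing each strand encounters is opposite to that at $x$. Because $R$ is alternating and connected, once the over/under is fixed anywhere in $R$ it is determined at every crossing of $R$, and combined with the orientations propagated from $x$ this determines the sign of every crossing of $R$.

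The key observation is then that inside an alternating tangle whose boundary strands are coherently oriented (all winding the same way through the tangle), every crossing has the same sign. Consequently, a negative crossing in $R$ can only occur where two oppositely oriented strands meet, producing a ``wall'' that separates $R$ into maximal positive alternating sub-tangles. Since $c_-(D)=3$ and $x$ is positive, exactly three such walls appear in $R$, and $D$ decomposes as a collection of positive alternating sub-tangles glued together along these three wall crossings together with the dealternator.

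Finally, I would apply the $A$-almost alternating conditions from Definition \ref{definition:ABalmostalternating} — distinctness of $u_1,u_2$ and of $v_1,v_2$, the absence of non-dealternator crossings on the boundary of both $u_i$'s or both $v_i$'s, and the white-region condition (3A) — to restrict how the three walls sit relative to $x$. A case analysis on the incidence of the walls with the regions $u_1, u_2, v_1, v_2$ shows that, up to planar isotopy, only two configurations survive all these constraints: either $R$ splits into four positive alternating sub-tangles joined cyclically through the dealternator (yielding the first picture of Figure \ref{figure:aa3}), or $R$ consists of two positive alternating sub-tangles connected by a strip of four crossings containing the dealternator (the second picture).

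The main obstacle will be the last step's case analysis — ruling out hybrid configurations. In particular, one must check that any other distribution of the three walls around $x$ either forces a white region in $R$ to share a crossing with both $u_1$ and $u_2$ (violating (3A)), places a non-dealternator crossing on the common boundary of $u_1,u_2$ or $v_1,v_2$ (violating (2)), or else creates a reducible or non-alternating factor inconsistent with the structure of the $R_i$'s. Careful bookkeeping of which wall crossings lie adjacent to which of the four distinguished regions is what drives the dichotomy into the two listed pictures.
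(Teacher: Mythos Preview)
Your approach is genuinely different from the paper's, and as written it has a gap at the ``key observation.''

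The paper does not work directly in $D$. Instead it changes the dealternator to obtain the reduced alternating diagram $D_{\alt}$, which now has four negative crossings. It then invokes Lemma~\ref{lemma:cycle}: in the cycle decomposition of $\Gamma(D_{\alt})$ induced by the all-$A$ state, every cycle contains an even number of negative vertices (because negative crossings are exactly the sinks and sources of these directed cycles). Since the four negative crossings contribute eight negative vertices with at most four per cycle, there are only three distribution patterns --- $(4,4)$, $(4,2,2)$, $(2,2,2,2)$ --- which correspond to four concrete alternating templates $D_1,\dots,D_4$ (Figure~\ref{figure:4neg}). Changing one negative crossing in each back to the dealternator and testing condition~(3A) eliminates $D_3$ and $D_4$, leaving exactly the two diagrams of Figure~\ref{figure:aa3}. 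The parity lemma does all the structural work; the case analysis is then short and mechanical.

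Your plan instead stays in $D$ and tries to locate the three negative crossings of $R$ as ``walls'' separating positive alternating sub-tangles. The problem is the assertion that \emph{inside an alternating tangle whose boundary strands are coherently oriented, every crossing has the same sign}. As stated this is not true (and ``coherently oriented'' is not made precise): an alternating tangle with, say, four parallel upward strands can certainly contain crossings of both signs, since sign depends on the global checkerboard pattern together with orientation, not just on the boundary orientations. What is true --- and what the paper exploits --- is that negative crossings are exactly the sink/source vertices in the all-$A$ cycle decomposition, which forces a parity constraint on each all-$A$ circle. Without that (or an equivalent replacement), your ``wall'' picture does not give a finite enumeration, and the final case analysis you flag as the main obstacle has no controlled starting point. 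If you want to salvage your route, the cleanest fix is to pass to $D_{\alt}$ and prove the sink/source parity statement; at that point you have essentially reconstructed the paper's argument.
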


\begin{proof}
Let $D_{\alt}$ be the alternating diagram obtained from $D$ by changing the dealternator. Since the dealternator is positive, $D_{\alt}$ has four negative crossings, corresponding to four negative vertices $v_1$, $v_2$, $v_3$, and $v_4$ in $\Gamma(D_{alt})$. Since each vertex in $\Gamma(D_{\alt})$ appears twice in the cycle decomposition $\mathcal{C}$ induced by the all-$A$ state of $D_{\alt}$, it follows that the cycle decomposition $\mathcal{C}$ contains eight negative vertices, with at most four being contained in a single cycle.  Lemma \ref{lemma:cycle} states that each cycle in $\mathcal{C}$ contains an even number of negative vertices, and thus there are three possibilities for the cycles containing negative crossings.
\begin{enumerate}
\item Two cycles contain four negative vertices each.
\item One cycle contains four negative vertices and two cycles contain two negative vertices each.
\item Four cycles contain two negative vertices each.
\end{enumerate}
If there are two cycles with four negative vertices each in $\mathcal{C}$, then $D_{\alt}$ is in the format of $D_1$ in Figure \ref{figure:4neg}, and changing one of the negative crossings in $D_1$ results in the first diagram in Figure \ref{figure:aa3}. If there is one cycle with four negative vertices and two cycles with two negative vertices each in $\mathcal{C}$, then $D_{\alt}$ is in the format $D_3$ in Figure \ref{figure:4neg}. However, changing any of the four negative crossings in such a diagram results in the diagram not being $A$-almost alternating because there will be a region that shares a crossing in its border with both the regions $u_1$ and $u_2$. 

Suppose that $\mathcal{C}$ has four cycles each with two negative vertices. Either there are two vertices $v_i$ and $v_j$ that are both contained in two different cycles of $\mathcal{C}$ or no such pair of vertices exist. In the former case, the $D_{\alt}$ is in the format of $D_4$ in Figure \ref{figure:4neg}. Changing any of the four negative crossings in such a diagram results in the regions $u_1$ and $u_2$ being adjacent, and hence the resulting diagram is not $A$-almost alternating. In the latter case, $D_{\alt}$ is in the format of $D_2$ in Figure \ref{figure:4neg}, and changing one of the negative crossings in $D_2$ results in the second diagram of Figure \ref{figure:aa3}.
\end{proof}

\begin{figure}[h]
\[\begin{tikzpicture}[scale = 1]

\draw (2,-.5) node{$D_1$};

\draw (0,0) rectangle (1.5,1.5);
\draw (0,2.5) rectangle (1.5,4);
\draw (2.5,0) rectangle (4,1.5);
\draw (2.5,2.5) rectangle (4,4);

\draw (.75,3.25) node{$R_1$};
\draw (3.25,3.25) node{$R_2$};
\draw (3.25,.75) node{$R_3$};
\draw (.75,.75) node{$R_4$};

\draw (.4,1.25) node{\tiny{$-$}};
\draw (1.1,1.25) node{\tiny{$+$}};
\draw (1.25,1.1) node{\tiny{$-$}};
\draw (1.25,.4) node{\tiny{$+$}};

\draw (.4,2.75) node{\tiny{$+$}};
\draw (1.1,2.75) node{\tiny{$-$}};
\draw (1.25,2.9) node{\tiny{$+$}};
\draw (1.25,3.6) node{\tiny{$-$}};

\draw (2.9,1.25) node{\tiny{$-$}};
\draw (3.6,1.25) node{\tiny{$+$}};
\draw (2.75,1.1) node{\tiny{$+$}};
\draw (2.75,.4) node{\tiny{$-$}};

\draw (2.9,2.75) node{\tiny{$+$}};
\draw (3.6,2.75) node{\tiny{$-$}};
\draw (2.75,2.9) node{\tiny{$-$}};
\draw (2.75,3.6) node{\tiny{$+$}};

\begin{knot}[    
    consider self intersections,
    clip width = 5,
    ignore endpoint intersections = true,
    end tolerance = 1pt
    ]

\strand[->]
    (1.1,1.5) to [out = 90, in =270 ] (.4,2.5);  
   \strand[->]
    (1.1,2.5) to [out = 270, in =90 ] (.4,1.5);

\strand[->] (2.5,3.6) to [out = 180, in =0] (1.5,2.9);
\strand[->] (1.5,3.6) to [out = 0, in= 180] (2.5,2.9);

\strand[->] (2.9,1.5) to [out = 90, in =270] (3.6,2.5);
\strand [->] (2.9,2.5) to [out = 270, in =90] (3.6,1.5);

\strand[->] (2.5,.4) to [out = 180, in=0] (1.5,1.1);
\strand[->] (1.5,.4) to [out = 0, in = 180] (2.5,1.1);

\end{knot}


\begin{scope}[xshift = 5cm]

\draw (3,-.5) node{$D_2$};

\draw (0,0) rectangle (6,1.5);
\draw (0,2.5) rectangle (6,4);

\draw (3,.75) node{$R_2$};
\draw (3,3.25) node{$R_1$};

\draw (.25,1.25) node{\tiny{$-$}};
\draw (1.25,1.25) node{\tiny{$+$}};
\draw (1.75,1.25) node{\tiny{$-$}};
\draw (2.75,1.25) node{\tiny{$+$}};
\draw (3.25,1.25) node{\tiny{$-$}};
\draw (4.25,1.25) node{\tiny{$+$}};
\draw (4.75,1.25) node{\tiny{$-$}};
\draw (5.75,1.25) node{\tiny{$+$}};

\draw (.25,2.75) node{\tiny{$+$}};
\draw (1.25,2.75) node{\tiny{$-$}};
\draw (1.75,2.75) node{\tiny{$+$}};
\draw (2.75,2.75) node{\tiny{$-$}};
\draw (3.25,2.75) node{\tiny{$+$}};
\draw (4.25,2.75) node{\tiny{$-$}};
\draw (4.75,2.75) node{\tiny{$+$}};
\draw (5.75,2.75) node{\tiny{$-$}};

\begin{knot}[    
    consider self intersections,
    clip width = 5,
    ignore endpoint intersections = true,
    end tolerance = 1pt
    ]

    \strand[->] (.25,1.5) to [out = 90, in=270] (1.25,2.5);
        \strand[->] (.25,2.5) to [out = 270,in=90] (1.25,1.5);
    
    \strand[->] (1.75,1.5) to [out = 90, in = 270] (2.75,2.5);
    \strand[->] (1.75, 2.5) to [out = 270, in =90] (2.75,1.5);
    
    \strand[->] (3.25,1.5) to [out = 90, in = 270] (4.25,2.5);
    \strand[->] (3.25, 2.5) to [out = 270, in =90] (4.25,1.5);
    
    \strand[->] (4.75,1.5) to [out = 90, in = 270] (5.75,2.5);
    \strand[->] (4.75, 2.5) to [out = 270, in =90] (5.75,1.5);
    
    \end{knot}
    

\end{scope}

\begin{scope}[yshift = -5cm, xshift = -1.5cm]

\draw (2.75,-.5) node{$D_3$};

\draw (0,0) rectangle (1,3);
\draw (2,0) rectangle (3.5,3);
\draw (4.5,0) rectangle (5.5,3);
\draw (.4,1.5) node{$R_1$};
\draw (2.75,1.5) node{$R_2$};
\draw (5.1,1.5) node{$R_3$};

\draw (.75,.25) node{\tiny{$+$}};
\draw (.75,1.25) node{\tiny{$-$}};
\draw (.75,1.75) node{\tiny{$+$}};
\draw (.75,2.75) node{\tiny{$-$}};

\draw (2.25,.25) node{\tiny{$-$}};
\draw (2.25,1.25) node{\tiny{$+$}};
\draw (2.25,1.75) node{\tiny{$-$}};
\draw (2.25,2.75) node{\tiny{$+$}};

\draw (3.25,.25) node{\tiny{$+$}};
\draw (3.25,1.25) node{\tiny{$-$}};
\draw (3.25,1.75) node{\tiny{$+$}};
\draw (3.25,2.75) node{\tiny{$-$}};

\draw (4.75,.25) node{\tiny{$-$}};
\draw (4.75,1.25) node{\tiny{$+$}};
\draw (4.75,1.75) node{\tiny{$-$}};
\draw (4.75,2.75) node{\tiny{$+$}};

\begin{knot}[    
    consider self intersections,
    clip width = 5,
    ignore endpoint intersections = true,
    end tolerance = 1pt
    ]

    \strand[->] (2,.25) to [out = 180, in=0] (1,1.25);
        \strand[->] (1,.25) to [out = 0,in=180] (2,1.25);
    
    \strand[->] (2,1.75) to [out = 180, in=0] (1,2.75);
        \strand[->] (1,1.75) to [out = 0,in=180] (2,2.75);
    
    \strand[->] (3.5,2.75) to [out = 0, in = 180] (4.5,1.75);
    \strand[->] (4.5,2.75) to [out =180, in = 0] (3.5,1.75);
    
    \strand[->] (3.5,1.25) to [out = 0, in = 180] (4.5,.25);
    \strand[->] (4.5,1.25) to [out =180, in = 0] (3.5,.25);
    
    \end{knot}

\end{scope}

\begin{scope}[xshift = 5cm, yshift = -5cm]

\draw (3,-.5) node{$D_4$};

\draw (0,0) rectangle (6,4);
\draw (.5,.5) rectangle (2.5,3.5);
\draw (3.5,.5) rectangle (5.5,3.5);
\draw (1,1.5) rectangle (2,2.5);
\draw (4,1.5) rectangle (5,2.5);

\draw (1.5,2) node{$R_1$};
\draw (4.5,2) node{$R_2$};
\draw (3,2) node{$R_3$};

\draw (1.2,.3) node{\tiny{$-$}};
\draw (1.8,.3) node{\tiny{$+$}};

\draw (1.2,1.7) node{\tiny{$+$}};
\draw (1.8,1.7) node{\tiny{$-$}};

\draw (1.2,2.3) node{\tiny{$-$}};
\draw (1.8,2.3) node{\tiny{$+$}};

\draw (1.2,3.7) node{\tiny{$+$}};
\draw (1.8,3.7) node{\tiny{$-$}};

\draw (4.2,.3) node{\tiny{$-$}};
\draw (4.8,.3) node{\tiny{$+$}};

\draw (4.2,1.7) node{\tiny{$+$}};
\draw (4.8,1.7) node{\tiny{$-$}};

\draw (4.2,2.3) node{\tiny{$-$}};
\draw (4.8,2.3) node{\tiny{$+$}};

\draw (4.2,3.7) node{\tiny{$+$}};
\draw (4.8,3.7) node{\tiny{$-$}};

\begin{knot}[    
    consider self intersections,
    clip width = 5,
    ignore endpoint intersections = true,
    end tolerance = 1pt
    ]

    \strand[->] (1.8,1.5) to [out = 270, in=90] (1.2,.5);
        \strand[->] (1.8,.5) to [out = 90,in=270] (1.2,1.5);
    
    \strand[->] (1.2,2.5) to [out = 90, in=270] (1.8,3.5);
        \strand[->] (1.2,3.5) to [out = 270,in=90] (1.8,2.5);
    
    \strand[->] (4.8,1.5) to [out = 270, in=90] (4.2,.5);
        \strand[->] (4.8,.5) to [out = 90,in=270] (4.2,1.5);
    
    \strand[->] (4.2,2.5) to [out = 90, in=270] (4.8,3.5);
        \strand[->] (4.2,3.5) to [out = 270,in=90] (4.8,2.5);
    
    \end{knot}

\end{scope}

\end{tikzpicture}\]
\caption{Reduced alternating diagrams with four negative crossings. The tangle $R_i$ is positive and alternating.}
\label{figure:4neg}
\end{figure}
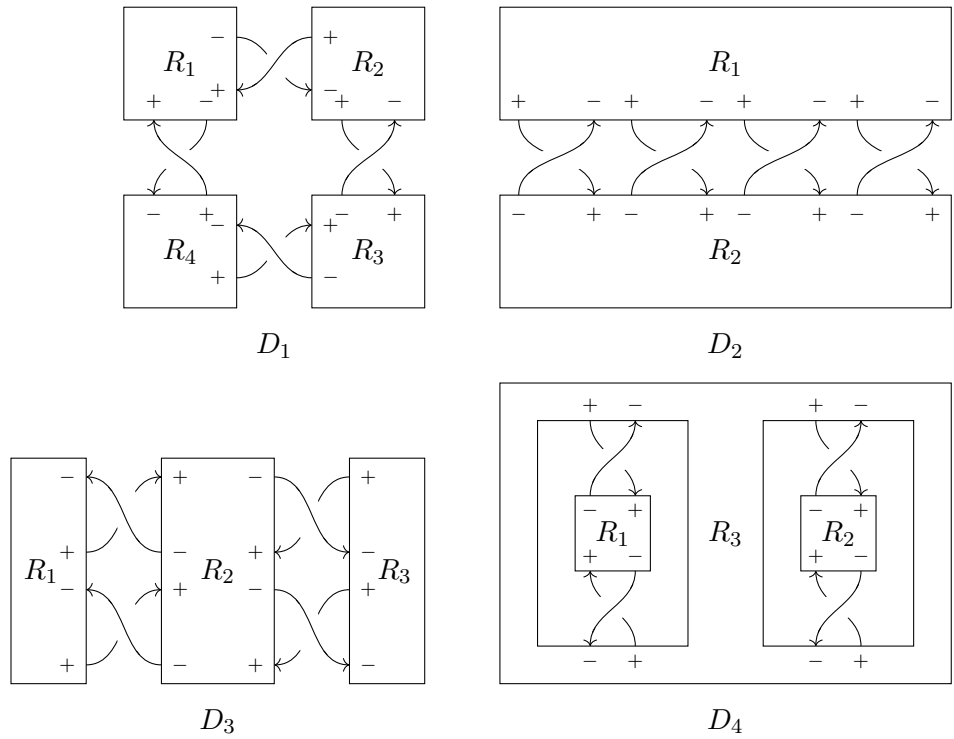

In order to prove Theorem \ref{theorem:genus}, we need two strategies that yield upper bounds on $g_4(K)$. The first is that $g_4(K)\leq g_3(K)$ where $g_3(K)$ is the Seifert genus of $K$, i.e. the minimum genus of any Seifert surface of $K$. If $D$ is a diagram of $K$, then define $g_3(D)$ to be the genus of the surface obtained by applying Seifert's algorithm to $D$. We have
\[g_3(D) = \frac{1}{2}(1 + c(D) - f(D))\]
where $c(D)$ is the number of crossings of $D$ and $f(D)$ is the number of components in the oriented or Seifert state of $D$. It follows that if $D$ is a diagram of $K$, then
\begin{equation}
\label{eq:g4bound1}
g_4(K) \leq \frac{1}{2}(1 + c(D)-f(D)).
\end{equation}

The second strategy we use is modifying a diagram via a saddle move, that is replacing two adjacent and oppositely oriented arcs in a diagram as in Figure \ref{figure:saddlemove}. Let $K_1$ be a knot with diagram $D_1$. Let $D_2$ be the diagram obtained after performing two saddle moves to $D_1$, and let $K_2$ be the link with diagram $D_2$. A saddle move changes the number of components of a link by one, and thus the link $K_2$ will either have one or three components. Figure \ref{figure:saddlemove} indicates the following inequality
\begin{equation}
\label{eq:g4bound2}
g_4(K_1) \leq \begin{cases} g_4(K_2) + 1&\text{if $K_2$ is a knot,}\\
g_4(K_2) + 2 & \text{if $K_2$ is a link with three components.}
\end{cases}
\end{equation}

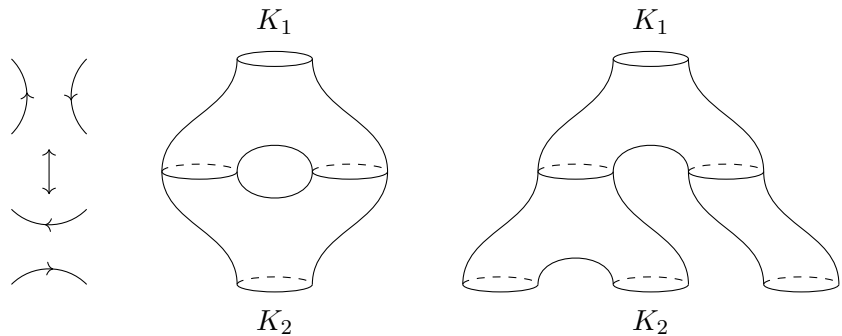
\begin{figure}[h]
\[\begin{tikzpicture}

    \begin{scope}[decoration={
    markings,
    mark=at position 0.55 with {\arrow{>}}}
    ] 
    \draw[postaction={decorate}] (0,0) to [out=45, in =135] (1,0);
    \draw[postaction={decorate}] (1,1) to [out=225, in =-45] (0,1);
    
    \draw[postaction={decorate}] (0,2) to [out=45, in =-45] (0,3);
    \draw[postaction={decorate}] (1,3) to [out=225, in =135] (1,2);

\end{scope}

\draw[<->] (.5,1.2) -- (.5,1.8);

\draw (3.5,3) ellipse (.5cm and .1cm);
\draw (2,1.5) arc (180:360:.5cm and .1cm);
\draw[dashed] (2,1.5) arc (180:0:.5cm and .1cm);

\draw (4,1.5) arc (180:360:.5cm and .1cm);
\draw[dashed] (4,1.5) arc (180:0:.5cm and .1cm);

\draw (3,0) arc (180:360:.5cm and .1cm);
\draw[dashed] (3,0) arc (180:0:.5cm and .1cm);

\draw (3,3) to [out = 270, in = 90] (2,1.5) to [out = 270, in = 90] (3,0);
\draw (4,3) to [out = 270, in = 90] (5,1.5) to [out = 270, in = 90] (4,0);
\draw (3,1.5) to [out = 90, in = 90, looseness = 1.2] (4,1.5) to [out = 270, in = 270, looseness=1.2] (3,1.5);

\draw (3.5,3.5) node{$K_1$};
\draw (3.5,-.5) node{$K_2$};

\begin{scope}[xshift = 5cm]
\draw (3.5,3) ellipse (.5cm and .1cm);
\draw (2,1.5) arc (180:360:.5cm and .1cm);
\draw[dashed] (2,1.5) arc (180:0:.5cm and .1cm);

\draw (4,1.5) arc (180:360:.5cm and .1cm);
\draw[dashed] (4,1.5) arc (180:0:.5cm and .1cm);

\draw (3,0) arc (180:360:.5cm and .1cm);
\draw[dashed] (3,0) arc (180:0:.5cm and .1cm);

\draw (1,0) arc (180:360:.5cm and .1cm);
\draw[dashed] (1,0) arc (180:0:.5cm and .1cm);

\draw (5,0) arc (180:360:.5cm and .1cm);
\draw[dashed] (5,0) arc (180:0:.5cm and .1cm);

\draw(3,3) to [out = 270, in = 90] (2,1.5) to [out = 270, in = 90] (1,0);
\draw (4,3) to [out = 270, in = 90] (5,1.5) to [out = 270, in = 90] (6,0);
\draw (5,0) to [out = 90, in = 270] (4,1.5) to [out = 90, in = 90,looseness = 1.2] (3,1.5) to [out = 270, in = 90] (4,0);
\draw (3,0) to [out = 90, in = 90, looseness = 1.2] (2,0);

\draw (3.5,3.5) node{$K_1$};
\draw (3.5,-.5) node{$K_2$};

\end{scope}

\end{tikzpicture}\]
\caption{A saddle move and the induced resulting cobordism between $K_1$ and $K_2$ in the cases where $K_2$ is a knot or a three component link.}
\label{figure:saddlemove}
\end{figure}

\begin{proof}[Proof of Theorem \ref{theorem:genus}]
Suppose that $D$ is $A$-adequate, has Turaev genus one, and has two negative crossings. Since $D$ is $A$-adequate, the cycle decomposition of $\Gamma(D)$ induced by the all-$A$ states has two cycles each with two negative vertices. For positive crossings, the $A$-resolution and the oriented resolution agree. The Seifert state for $D$ and the all-$A$ state for $D$ are related by changing the two oriented or $B$-resolutions at the negative crossings. Because the vertices corresponding to those circles are in two cycles, performing both switches preserves the number of components in the state. Thus the number $f(D)$ of components in the Seifert state of $D$ is the same as the number $s_A(D)$ of components in the all-$A$ state of $D$. Equation \ref{eq:g4bound1} implies
\[g_4(K) \leq g_3(D) = \frac{1}{2}(1+c(D) - f(D)) = \frac{1}{2}(1+c(D) - s_A(D)) =\frac{s(K)}{2}+1,\]
where the last equality follows from Theorem \ref{theorem:ras}.

Suppose that $D$ is $A$-almost alternating and has three negative crossings. Furthermore, suppose the dealternator of $D$ is negative, and let $D_{\alt}$ be the diagram obtained from $D$ be changing the dealternator. Then $D_{\alt}$ is a reduced alternating, and thus adequate, diagram with two negative crossings. Hence $s_A(D_{\alt})=f(D_{\alt})$ as in the previous paragraph. Because $D$ and $D_{\alt}$ are related by a crossing change, the genera of the surfaces obtained by Seifert's algorithm are the same, that is $g_3(D) = g_3(D_{\alt})$. Also, $s_A(D_{\alt}) = s_A(D) + 1$. Thus
\begin{align*}
g_3(D) = & \; g_3(D_{\alt})\\
 = & \; \frac{1}{2}(1+c(D_{\alt}) - f(D_{\alt}))\\
 = & \; \frac{1}{2}(1+c(D_{\alt}) - s_A(D_{\alt}) )\\
 = &\;  \frac{1}{2}(c(D) -s_A(D)).
 \end{align*}
Hence Equation \ref{eq:g4bound1} implies
\[g_4(K) \leq g_3(D) = \frac{1}{2}(c(D) -s_A(D)) = \frac{s(K)}{2}+1,\]
where again the last equality follows from Theorem \ref{theorem:ras}.

Suppose that $D$ is $A$-almost alternating and has three negative crossings, but now also suppose the dealternator of $D$ is positive. Lemma \ref{lemma:aa3pos} states that $D$ is of the format of one of the two diagrams in Figure \ref{figure:aa3}. Suppose that $D$ has the format of the diagram on the left of Figure \ref{figure:aa3}. Performing a flype and a Reidemeister 2 move around the tangle $R_1$ results in a positive knot, and thus $g_4(K)=s(K)$ in this case.

Finally suppose that $D$ has the format of the diagram on the right in Figure \ref{figure:aa3}. Unfortunately, the approach using Equation \ref{eq:g4bound1} yields $g_4(K) \leq \frac{s(K)}{2} + 2$, and so in order to prove the result, we use a strategy involving Inequality \ref{eq:g4bound2}. As shown in Figure \ref{figure:aa32}, one can perform two saddle moves followed by a flype and transform $D$ into a positive diagram $D_{\pos}$. Let $L_{\pos}$ be the link whose diagram is $D_{\pos}$. By Theorem \ref{theorem:Nakamura},
\[g_4(L_{\pos}) = \frac{1}{2}(2-\mu(L_{\pos}) - f(D_{\pos}) + c(D_{pos}))\]
where the number of components $\mu(L_{\pos})$ of $L_{\pos}$ is either one or three. We have $c(D_{\pos}) = c(D) -4$ and $f(D_{\pos})=s_A(D_{\pos}) = s_A(D)-1$. If $L_{\pos}$ is a knot, then Inequality \ref{eq:g4bound2} implies that
\[g_4(K) \leq g_4(L_{\pos})+1 = \frac{1}{2}(3 + c(D_{\pos}) - f(D_{\pos})) = \frac{1}{2}(c(D) - s_A(D)) = \frac{s(K)}{2}+1,\]
where the last equality follows from Theorem \ref{theorem:ras}. If $L_{\pos}$ is a link with three components, then Inequality \ref{eq:g4bound2} implies that
\[g_4(K) \leq g_4(L_{\pos})+2 = \frac{1}{2}(3+c(D_{\pos}) - f(D_{\pos}))=\frac{s(K)}{2}+1,\]
as above. 

Therefore $\frac{s(K)}{2} \leq g_4(K) \leq \frac{s(K)}{2}+1$ when $K$ is in case $1$ or $2$ of the theorem. The result is proved analogously when $K$ is in case $3$ or $4$.
\end{proof}

\begin{figure}
\[\begin{tikzpicture}

\draw (0,0) rectangle (4,1);
\draw (0,2) rectangle (4,3);

\draw (.2,.8) node{\tiny{$-$}};
\draw (.8,.8) node{\tiny{$+$}};
\draw (1.2,.8) node{\tiny{$-$}};
\draw (1.8,.8) node{\tiny{$+$}};
\draw (2.2,.8) node{\tiny{$-$}};
\draw (2.8,.8) node{\tiny{$+$}};
\draw (3.2,.8) node{\tiny{$-$}};
\draw (3.8,.8) node{\tiny{$+$}};

\draw (2,.4) node{$R_2$};

\draw (.2,2.2) node{\tiny{$+$}};
\draw (.8,2.2) node{\tiny{$-$}};
\draw (1.2,2.2) node{\tiny{$+$}};
\draw (1.8,2.2) node{\tiny{$-$}};
\draw (2.2,2.2) node{\tiny{$+$}};
\draw (2.8,2.2) node{\tiny{$-$}};
\draw (3.2,2.2) node{\tiny{$+$}};
\draw (3.8,2.2) node{\tiny{$-$}};

\draw (2,2.6) node{$R_1$};

\begin{knot}[    
    consider self intersections,
    clip width = 5,
    ignore endpoint intersections = true,
    end tolerance = 1pt
    ]

    \strand[->] (.2,2) to [out = 270, in = 90] (.8,1);
    \strand[->] (.2,1) to [out = 90, in = 270] (.8,2);
    
    \strand[->] (1.2,1) to [out = 90, in = 270] (1.8,2);
    \strand[->] (1.2,2) to [out = 270, in = 90] (1.8,1);

    \strand[->] (2.2,1) to [out = 90, in = 270] (2.8,2);
    \strand[->] (2.2,2) to [out = 270, in = 90] (2.8,1);
    
    \strand[->] (3.2,1) to [out = 90, in = 270] (3.8,2);
    \strand[->] (3.2,2) to [out = 270, in = 90] (3.8,1);
    
    \end{knot}
    
    \draw (2,-.5) node{$D$};
    
\begin{scope}[xshift = 6cm]

\draw (.6,0) rectangle (3.4,1);
\draw (.6,2) rectangle (3.4,3);

\draw (.8,.2) node{\tiny{$-$}};
\draw (.8,.8) node{\tiny{$+$}};
\draw (1.2,.8) node{\tiny{$-$}};
\draw (1.8,.8) node{\tiny{$+$}};
\draw (2.2,.8) node{\tiny{$-$}};
\draw (2.8,.8) node{\tiny{$+$}};
\draw (3.2,.8) node{\tiny{$-$}};
\draw (3.2,.2) node{\tiny{$+$}};

\draw (.8,2.2) node{\tiny{$-$}};
\draw (.8,2.8) node{\tiny{$+$}};
\draw (1.2,2.2) node{\tiny{$+$}};
\draw (1.8,2.2) node{\tiny{$-$}};
\draw (2.2,2.2) node{\tiny{$+$}};
\draw (2.8,2.2) node{\tiny{$-$}};
\draw (3.2,2.8) node{\tiny{$-$}};
\draw (3.2,2.2) node{\tiny{$+$}};

\draw (2,2.6) node{$R_1$};
\draw (2,.4) node{$R_2$};

\begin{knot}[    
    clip width = 4,
    ignore endpoint intersections = false,
    end tolerance = 1pt
    ]

    \strand[->] (.6,2.8) to [out = 180, in= 180,looseness=1.4] (.6,.8);
    \strand[->] (.6,.2) to [out = 180, in = 180,looseness=1.4] (.6,2.2);
    
    \strand[->] (1.2,1) to [out = 90, in = 270] (1.8,2);
    \strand[->] (1.2,2) to [out = 270, in = 90] (1.8,1);

    \strand[->] (2.2,1) to [out = 90, in = 270] (2.8,2);
    \strand[->] (2.2,2) to [out = 270, in = 90] (2.8,1);
    
   \strand[->] (3.4,.8) to [out=0, in=0, looseness = 1.4 ] (3.4,2.8);
    \strand[->] (3.4,2.2) to [out=0, in=0, looseness = 1.4] (3.4,.2);

    \end{knot}
    
        \draw (2,-.5) node{$D$};

\end{scope}

\begin{scope}[yshift = -4cm]

\draw (.6,0) rectangle (3.4,1);
\draw (.6,2) rectangle (3.4,3);

\draw (.8,.2) node{\tiny{$-$}};
\draw (.8,.8) node{\tiny{$+$}};
\draw (1.2,.8) node{\tiny{$-$}};
\draw (1.8,.8) node{\tiny{$+$}};
\draw (2.2,.8) node{\tiny{$-$}};
\draw (2.8,.8) node{\tiny{$+$}};
\draw (3.2,.8) node{\tiny{$-$}};
\draw (3.2,.2) node{\tiny{$+$}};

\draw (.8,2.2) node{\tiny{$-$}};
\draw (.8,2.8) node{\tiny{$+$}};
\draw (1.2,2.2) node{\tiny{$+$}};
\draw (1.8,2.2) node{\tiny{$-$}};
\draw (2.2,2.2) node{\tiny{$+$}};
\draw (2.8,2.2) node{\tiny{$-$}};
\draw (3.2,2.8) node{\tiny{$-$}};
\draw (3.2,2.2) node{\tiny{$+$}};

\draw (2,2.6) node{$R_1$};
\draw (2,.4) node{$R_2$};

\begin{knot}[    
    clip width = 4,
    ignore endpoint intersections = false,
    end tolerance = 1pt
    ]

    \strand[->] (.6,2.8) to [out = 180, in= 180,looseness=1.4] (.6,.8);
    \strand[->] (.6,.2) to [out = 180, in = 180,looseness=1.4] (.6,2.2);

   \strand[->] (3.4,.8) to [out=0, in=0, looseness = 1.4 ] (3.4,2.8);
    \strand[->] (3.4,2.2) to [out=0, in=0, looseness = 1.4] (3.4,.2);

    \end{knot}
    
    \draw[->] (1.2,1) to [out = 90, in = 90] (1.8,1);
    \draw[->] (1.2,2) to [out = 270, in = 270] (1.8,2);
    \draw[->] (2.2,1) to [out = 90, in = 90] (2.8,1);
    \draw[->] (2.2,2) to [out = 270, in = 270] (2.8,2);
    
        \draw (2,-.5) node{$\widetilde{D}$};

\end{scope}

\begin{scope}[yshift = -4cm, xshift = 6cm]

\draw (.6,0) rectangle (3.4,1);
\draw (.6,2) rectangle (3.4,3);

\draw (.8,.2) node{\tiny{$-$}};
\draw (.8,.8) node{\tiny{$+$}};
\draw (1.2,.8) node{\tiny{$-$}};
\draw (1.8,.8) node{\tiny{$+$}};
\draw (2.2,.8) node{\tiny{$-$}};
\draw (2.8,.8) node{\tiny{$+$}};
\draw (3.2,.8) node{\tiny{$-$}};
\draw (3.2,.2) node{\tiny{$+$}};

\draw (.8,2.2) node{\tiny{$-$}};
\draw (.8,2.8) node{\tiny{$+$}};
\draw (1.2,2.8) node{\tiny{$-$}};
\draw (1.8,2.8) node{\tiny{$+$}};
\draw (2.2,2.8) node{\tiny{$-$}};
\draw (2.8,2.8) node{\tiny{$+$}};
\draw (3.2,2.8) node{\tiny{$-$}};
\draw (3.2,2.2) node{\tiny{$+$}};

\draw (2,2.4) node[yscale=-1]{$R_1$};
\draw (2,.4) node{$R_2$};
    
    \draw[->] (1.2,1) to [out = 90, in = 90] (1.8,1);
    \draw[->] (1.2,3) to [out = 90, in = 90] (1.8,3);
    \draw[->] (2.2,1) to [out = 90, in = 90] (2.8,1);
    \draw[->] (2.2,3) to [out = 90, in = 90] (2.8,3);
    
    \draw[->] (.6,2.2) to [out =180, in = 180] (.6,.8);
    \draw[->] (.6,.2) to [out = 180, in = 180] (.6,2.8);
    \draw[->] (3.4,.8) to [out= 0, in=0] (3.4,2.2);
    \draw[->] (3.4,2.8) to [out = 0, in = 0](3.4,.2);
    
        \draw (2,-.5) node{$D_{\text{pos}}$};

\end{scope}

\end{tikzpicture}\]
\caption{Performing two saddle moves transforms $D$ into a positive diagram $D_{\pos}$.}
\label{figure:aa32}
\end{figure}
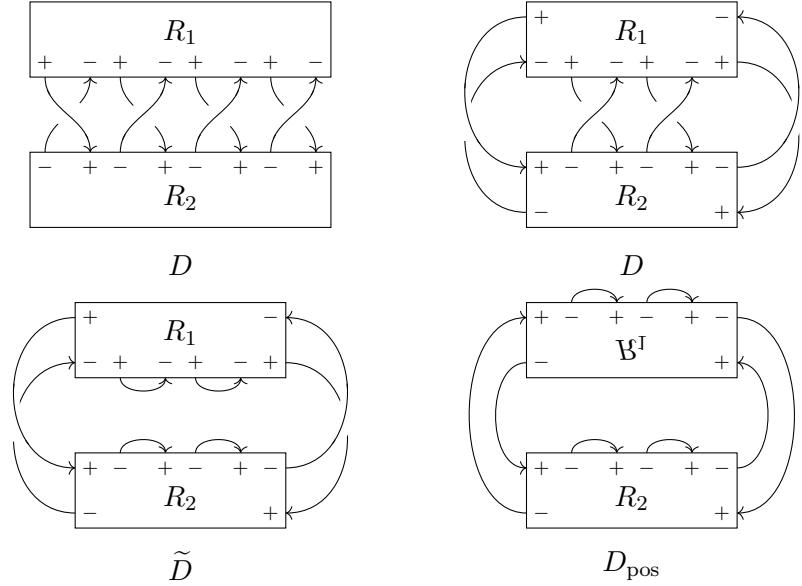

Theorem \ref{theorem:main} gives information about the first three nontrivial homological gradings of Khovanov homology. One might expect to repeat the arguments of this section when $D$ is $A$-Turaev genus one and the $i=0$ homological grading occurs in the second nontrivial homological grading. In the case where $D$ is $A$-adequate and Turaev genus one, this happens when the knot has one negative crossing. Tagami \cite{Tagami} proved that $2g_4(K)=s(K)$ for such knots. In the case where $D$ is $A$-almost alternating, the $i=0$ homological grading occurs in the second homological grading when $D$ has two negative crossings. Alas, this does not yield new interesting examples because of the following theorem.
\begin{theorem}
There are no $A$-almost alternating knots with exactly two negative crossings.
\end{theorem}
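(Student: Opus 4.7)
The plan is to argue by contradiction. Suppose $K$ is a knot admitting an $A$-almost alternating diagram $D$ with $c_-(D)=2$; let $d$ denote the dealternator and let $D_{\alt}$ be the alternating diagram obtained by changing $d$. Because changing a crossing swaps its sign, $c_-(D_{\alt})=1$ if $d$ is negative in $D$ and $c_-(D_{\alt})=3$ if $d$ is positive; after removing any nugatory crossings, we may assume $D_{\alt}$ is reduced alternating and hence $A$-adequate. The proof then splits along the sign of $d$.

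When $d$ is negative in $D$, the contradiction is immediate from Lemma \ref{lemma:cycle}. The unique negative crossing of $D_{\alt}$ contributes one sink vertex and one source vertex to two distinct cycles of the all-$A$ cycle decomposition of $\Gamma(D_{\alt})$ (see Figure \ref{figure:negsinksource}), leaving each of those cycles with exactly one negative vertex, which is odd, contradicting Lemma \ref{lemma:cycle}.

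When $d$ is positive in $D$, the plan is to mirror the parity analysis from the proof of Lemma \ref{lemma:aa3pos}, now applied to three negative crossings rather than four. The six negative vertices of $D_{\alt}$ must be distributed across cycles with an even count in each, and each crossing's sink and source must lie in distinct cycles; I expect these constraints to force the distribution to be $(2,2,2)$ across three cycles, with each cycle containing one sink and one source from two different negative crossings. This realizes a triangular interaction pattern between three all-$A$ cycles of $D_{\alt}$ and the three negative crossings. I would then enumerate the reduced alternating knot diagrams realizing this pattern (a short structural list analogous to Figure \ref{figure:4neg}) and, for each candidate, designate each of the three negative crossings in turn as the would-be dealternator, switch it back to recover $D$, and check the hypotheses of Definition \ref{definition:ABalmostalternating}. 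The target conclusion is that every choice violates condition (3A) (some white region of the alternating part shares crossings with both $u_1$ and $u_2$), contradicting the hypothesis that $D$ is $A$-almost alternating.

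The main obstacle lies in this final structural enumeration. One must argue that the $(2,2,2)$ triangular constraint on $D_{\alt}$, combined with reducedness and the knot hypothesis, leaves only finitely many diagrammatic shapes to inspect, and then verify each case exhaustively. The analysis should closely parallel the enumeration carried out after the parity step in Lemma \ref{lemma:aa3pos}, but with the strictly stronger conclusion that \emph{no} $A$-almost alternating candidate survives --- in contrast to the four-negative-crossing setting, where two diagrams (Figure \ref{figure:aa3}) remained.
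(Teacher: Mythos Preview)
Your plan is correct and follows essentially the same approach as the paper: split on the sign of the dealternator, use Lemma \ref{lemma:cycle} to rule out the negative-dealternator case via parity, and in the positive-dealternator case use the parity constraints on $D_{\alt}$ (forcing the $(2,2,2)$ distribution) to show condition (3A) fails. The paper's proof is considerably more terse in the second case: rather than carrying out the structural enumeration you outline, it simply asserts that the $(2,2,2)$ configuration forces a white region in the alternating tangle adjacent to both $u_1$ and $u_2$, so your proposed case analysis would actually flesh out a step the paper leaves to the reader.
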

\begin{proof}
Let $D$ be an $A$-almost alternating diagram with two negative crossings, and let $D_{\alt}$ be the reduced alternating diagram obtained by changing the dealternator in $D$. Then $D_{\alt}$ either has one or three negative crossings. Since $D_{\alt}$ is a reduced alternating diagram, it is $A$-adequate. Lemma \ref{lemma:cycle} states that each cycle in the cycle decomposition of $\Gamma(D)$ has an even number of negative vertices, which is impossible if $D$ only has one negative crossing. If $D$ has three negative crossings, then there are three cycles in the cycle decomposition of $\Gamma(D)$, each of which contains two negative vertices. In this case, there will be a region in the alternating tangle of $D$ adjacent to both $u_1$ and $u_2$, and thus $D$ is not $A$-almost alternating. Thus there is no diagram $A$-almost alternating diagram $D$ with two negative crossings.
\end{proof}

\bibliographystyle{amsalpha}
\bibliography{bib}

\end{document}